\definecolor{dmagenta}{rgb}{.4,.1,.5}
\definecolor{007}{rgb}{.0,.0,.7}
\definecolor{dred}{rgb}{.5,.0,.0}
\definecolor{dgreen}{rgb}{.0,.5,.0}
\definecolor{dblue}{rgb}{.0,.0,.5}
\definecolor{violet}{rgb}{.3,.0,.9}
\definecolor{orange}{cmyk}{0,.5,.1,.0}
\definecolor{dcyan}{cmyk}{.5,.0,.0,.0}
\definecolor{dyellow}{cmyk}{.0,.0,.5,.0}
\definecolor{cm}{cmyk}{1,.0,.0,.0}
\numberwithin{equation}{section}
\newtheorem{theorem}{Theorem}[section]
\newtheorem{lemma}{Lemma}[section]
\theoremstyle{definition}
\newtheorem{definition}{Definition}[section]
\newtheorem{example}{Example}[section]
\theoremstyle{remark}
\newtheorem{remark}{Remark}[section]
\newcommand{\eG}{{\mathscr{G}}}
\newcommand{\eD}{\mathfrak{D}}
\DeclareMathOperator{\Exp}{\mathbb{E}}
\DeclareMathOperator{\Prob}{\mathbb{P}}
\newcommand{\D}{\mathrm{d}}
\newcommand{\R}{\mathbb{R}}
\newcommand{\B}{\mathbb{B}}
\newcommand{\NN}{\mathbb{N}}
\newcommand{\Act}{\mathbb{U}}
\newcommand{\Ind}{\mathbf{1}}
\newcommand{\calA}{\mathcal{A}}
\newcommand{\calB}{\mathcal{B}}
\newcommand{\calC}{\mathcal{C}}
\newcommand{\calF}{\mathcal{F}}
\newcommand{\calH}{\mathcal{H}}
\newcommand{\calK}{\mathcal{K}}
\newcommand{\calL}{\mathcal{L}}
\newcommand{\calM}{\mathcal{M}}
\newcommand{\calP}{\mathcal{P}}
\newcommand{\calV}{\mathcal{V}} 
\newcommand{\calX}{\mathcal{X}} 
\newcommand{\calY}{\mathcal{Y}}
\newcommand{\al}{\alpha}
\newcommand{\eps}{\varepsilon}
\newcommand{\del}{\delta}
\newcommand{\cast}{\circledast}
\newcommand{\bvnorm}[1]{[\kern-0.45ex[\kern0.1ex #1 \kern0.1ex]\kern-0.45ex]}
\newcommand{\Uadm}{\mathfrak{U}}
\newcommand{\Usm}{\mathfrak{U}_{\mathrm{SM}}}
\newcommand{\Udsm}{\mathfrak{U}_{\mathrm{DSM}}}
\newcommand{\abs}[1]{\lvert#1\rvert}
\newcommand{\norm}[1]{\lVert#1\rVert}
\newcommand{\sorder}{{\mathfrak{o}}} 
\newcommand{\df}{:=}
\let\oldtocsection=\tocsection
\let\oldtocsubsection=\tocsubsection
\let\oldtocsubsubsection=\tocsubsubsection
\renewcommand{\tocsection}[2]{\hspace{0em}\oldtocsection{#1}{#2}}
\renewcommand{\tocsubsection}[2]{\hspace{1em}\oldtocsubsection{#1}{#2}}
\renewcommand{\tocsubsubsection}[2]{\hspace{2em}\oldtocsubsubsection{#1}{#2}}
\begin{document}

\title[MFG with ergodic cost]
{Mean Field Games with Ergodic cost for Discrete Time Markov Processes}

\author{Anup Biswas}
\address{Department of Mathematics,
Indian Institute of Science Education and Research, 
Dr. Homi Bhabha Road, Pune 411008, India.}
\email{anup@iiserpune.ac.in}

\date{\today}
\thanks{This research of the author was supported in part by a INSPIRE faculty fellowship.}

\begin{abstract}
We consider mean field games with ergodic cost in the framework of a general discrete time controlled Markov processes. The state space of
the processes is given by a general $\sigma$-compact Polish space.
Under certain conditions, we show the existence of a mean field game equilibrium.  We also study the 
$N$-person game where the players interacts with each other via their empirical measure. We show that the $N$-person game has Nash equilibrium and as $N$ tends to infinity the equilibria converge to a mean field game solution.
\end{abstract}

\subjclass[2000]{60J05, 93E20}

\keywords{Mean field games, optimal control, ergodic cost, $N$-person games, Nash equilibrium}

\maketitle

\section{\bf Introduction}
The goal of this article is to give a general framework of  discrete time controlled Markov processes for the analysis
of mean field games (MFG) and its connection with $N$-person games. 
Intuitively, mean filed game can be described as follows: there is a single representative agent whose state dynamics is also effected by an environment distribution coming from the infinite number of agents. State process of the representative agent can not influence the environment while solving his/her own optimization problem. Since all agents are indentical and act in the same way, the distribution of the representative agent should agree with the environment distribution.
Mean field games are introduced independently, in the seminal work of Huang, Malham\'{e} and Caines \cite{hmc} and in the seminal work of
Lasry and Lions \cite{lasry-lions} where the game dynamics are given by a certain class of large-population
stochastic differential games. In such games, there are $N$ number of (indistinguishable) players trying to optimize
certain cost that depends on the decisions of other players. Since the objectives are coupled, one naturally look for
Nash equilibrium.If the number of agents $N$ tends to infinity one expects the Nash equilibria to converge to a 
MFG solution. 

There are three major questions of interest in MFG: (1) existence of MFG solution, (2) uniqueness of MFG solution, and
(3) establishing regorous connection with the $N$-person games. Either one tries to show that the $N$-person game Nash equilibrium tends to the MFG equilibrium (\cite{lasry-lions, anup-ari, feleqi, fisher, bardi-priuli}) or tries to construct
approximate Nash equilibrium for $N$-person games from MFG solutions (\cite{hmc}). 
Question of existence has been resolved 
for a large class of problems (where the state dynamics are given by controlled diffusions). Uniqueness results for a general class seems difficult and only available under some monotonicity condition (see Remark~\ref{R-uni}
below).

Let us now give a quick survey on MFG literature. MFG is a fast growing field. We refer the readers to
the survey articles \cite{cardaliaguet, gomes-saude} and the book \cite{ben-fre-yam} for developments in MFG. A short 
account of applications of MFG is given at \cite{gu-lasry-lions}. For finite horizon type costs, existence of MFG
solution is established in \cite{carmona-delarue} using stochastic maximum principle whereas \cite{lacker} considers
the set up of controlled martingale problem and shows existence of MFG solution. Convergence of Nash equilibria of the $N$-person games to the solution of MFG  is obtained in \cite{fisher} where the cost function also has finite time horizon. Ergodic type costs are considered in \cite{lasry-lions, feleqi, anup-ari, bardi-priuli}. In \cite{lasry-lions, feleqi}, the controlled diffusions take values in a compact metric space whereas in \cite{anup-ari, bardi-priuli} the state space is the Euclidean space. Convergence of equilibria of $N$-person game is also discussed in \cite{lasry-lions, feleqi, anup-ari, bardi-priuli}. All the above mentioned work are done under the settings of controlled stochastic differential equations. MFG for finite state processes are studied in \cite{gomes-mohr-souza, gomes-2, kolokoltsov, gueant}.

In this article, we consider a general class of discrete Markov processes taking values in a $\sigma$-compact Polish space and the cost function is given by an ergodic cost. We also impose a blanket (geometric) stability condition on the processes (see \eqref{01}). This ensure that every (stationary) controlled Markov process has an invariant measure. Under a set of general conditions we show the existence of MFG solution. The proof technique uses
Kakutani-Fan-Glicksberg fixed point theorem. To apply this theorem one needs to show that the set valued map under consideration is upper-hemicontinuous (Lemma~\ref{L4.3}). This is done by showing equicontinuity property
of the value functions for the ergodic control problems. When the controlled states are governed by non-degenerate diffusion one can use Haranack's inequality (see \cite{anup-ari}) to establish such estimate. In our setup, we use the
minorization condition (see \eqref{02}) and the construction of split chain to obtain equicontinuity. We also consider the $N$-person game and using Kakutani-Fan-Glicksberg fixed point theorem we establish existence of Nash equilibrium. Under an additional convexity assumption (condition (A7)) we show in Theorem~\ref{Thm-4}
that the Nash equilibria converges, as $N$ tends to infinity, to the solution of MFG. 

Therefore to summerize our main contributions in this article:
\begin{itemize}
\item[--] we consider the MFG with ergodic cost for a general class of controlled discrete time Markov chains and show existence of MFG solution;
\item[--] we establish existence of $N$-person game Nash equilibrium;
\item[--] we derive the convergence of the Nash equilibria to the MFG solution.
\end{itemize}
The article is organized as follows. In Section~\ref{S-model} we introduce our controlled Markov chain and the
set of assumptions that we impose on our processes. Section~\ref{S-MFG} states our main existence result on
MFG solution. In Section~\ref{S-Nash} we discuss the $N$-person game and prove existence of Nash equilibrium.
Then in Section~\ref{S-convergence} we prove convergence of Nash equilibria to MFG solution. Finally, in 
Section~\ref{Ergodic HJB} we give the proof of Theorem~\ref{Thm-1}.

\subsection{Notation}
The set of nonnegative real numbers is denoted by $\R_{+}$,
$\NN$ stands for the set of natural numbers. The interior, closure, the boundary and the complement
of a set $A\subset\calX$, $\calX$ is a topological space, are denoted
by $A^o$, $\overline{A}$, $\partial{A}$ and $A^{c}$, respectively. 
$\Ind_A$ denotes
the indicator function of the set $A$.
The open ball of radius $R$ around $0$ is denoted by $B_{R}$.
Given two real numbers $a$ and $b$, the minimum (maximum) is denoted by $a\wedge b$ 
($a\vee b$), respectively.
Define $a^{+}\df a\vee 0$ and $a^{-}\df-(a\wedge 0)$. 
By $\delta_{x}$ we denote the Dirac mass at $x$. 

Given any Polish space $(\calX, \D_\calX)$, we denote by $\calP(\calX)$ the set of
probability measures on $\calX$ and $\calM(\calX)$ the set of all bounded signed measure on $\calX$.
For $\mu\in\calP(\calX)$ and a Borel measurable map $f\colon\calX\to\R$,
we often use the abbreviated notation
$$\mu(f)\;\df\; \int_{\calX} f\,\D{\mu}\,.$$
The total variation norm of a measure $\mu$ is denoted by $\norm{\mu}_{TV}$.
The Borel $\sigma$-algebra on $\calX$ is denoted by $\calB(\calX)$. $\calC(\calX)$ ($\calC_b(\calX)$) denotes the set of all real valued (bounded) continuous
functions of $\calX$.
Law of a random variable $X$ is denoted by $\calL(X)$.
Also $\kappa_{1},\kappa_{2},\dotsc$ 
are used as generic constants whose values might vary from place to place.

\section{\bf Model and Assumptions}\label{S-model}

Let $(\calX, \D_\calX)$ be a Polish space which will be treated as the state space for our controlled Markov processes. We shall assume that $\calX$ is $\sigma$-compact i.e., $\calX$ can be written
as a countable union  of compact subsets of $\calX$. Let $(\Omega, \calF, \Prob)$ be the probability space on which  the random variables are defined. $\Act$ is a compact metric space
that denotes the control space for the Markov controls. The controlled stochastic kernels are given by the Borel-measurable map $P(dy| \cdot, \cdot): \calX\times\Act\to \calP(\calX)$. A Markov control is given by a
collection of Borel-measurable maps $u_n\colon \calX\to\Act$, $n\in\NN$. Therefore given a Markov control $\{u_n\}$, the controlled Markov process $\{X_n\}$ is defined as follows: If $X_i = x\in\calX$, then the possible location
of $X_{i+1}$ is determined by the distribution $P(\cdot | x, u_i(x))$. The set of all Markov controls are denoted by $\Uadm$. A Markov control is called stationary if $u_n= u$, for all $n\in\NN$, for some Borel-measurable map $u\colon \calX\to\Act$. Let $\Usm$ be the set of all stationary Markov controls.

A lower-semicontinuous (lsc) function $g:\calX\to\R$ is said to be inf-compact if $\{x\in\calX\; :\; g(x)\leq \kappa\}$ is compact subset of $\calX$ for all $\kappa\in\R$. 
Let $\{K_n\}$ be a collection of compact subsets in $\calX$ having following property: if $\calX$ is compact
then $K_n=\calX$, otherwise
\begin{equation}\label{0001}
K_n\subset \; K^o_{n+1}, \quad \text{and}\quad \cup_{n} K_n \; =\; \calX.
\end{equation}
For a non-negative, lsc function $g\colon\calX\to\R_+$, we denote by $\sorder(g)$ the collection of function $f\colon \calX\to\R$ that satisfies
$$\limsup_{n\to\infty}\; \sup\Big\{\frac{\abs{f(x)}}{1+ g(x)}: x\in\calX\setminus K_n\Big\}=0.$$
It should be observed that for non-compact $\calX$,
 $\sorder(g)$ does not depend on the choice of family $\{K_n\}$ satisfying \eqref{0001}. This is due to the fact that
 $\cup K^o_n=\calX$.
 If $\calX$ is compact, we have $K_n=\calX$ and the supremum of empty set would be assumed to be $0$.
 We shall assume that
\begin{itemize}
\item[{\bf (A1)}] there exists lsc, inf-compact function $\calV\colon \calX\to\R, \; \calV\geq 1$, and a compact set $C\subset \calX$, satisfying
\begin{equation}\label{01}
\sup_{u\in\Act} \int_{\calX} \calV(y) \, P(dy| x, u) -\calV(x) \; \leq - \beta_1\calV(x)\,\Ind_{C^c}(x) + \beta_2\, \Ind_C(x),
\end{equation}
where $\beta_1, \beta_2$ are positive constants. Moreover, $\inf_{x\in C^c} \calV(x)\geq (\sup_{x\in C}\calV(x)\vee 2\frac{\beta_2}{\beta_1})$.
\end{itemize}
\eqref{01} is a standard stability condition and often used to obtain exponential convergence 
of the transition probabilities to the invariant measure (see \cite{meyn-tweedie}). 
We also need additional hypothesis on the controlled Markov processes to ensure that it have invariant measures. Let us first introduce randomized policy which helps to establish
convexity property of the set of invariant measures. A randomized (or relaxed) Markov control is given by the collection of
Borel-measurable maps $u_n:\calX\to \calP(\Act)$. We can extend the map of transition probability measures on $\calP(\Act)$ as follows
$$\int_{\calX\times \Act} f(y, u) P(dy | x, v(x))\; \df\; \int_{\calX\times \Act} f(y, u) P(dy | x, u) v(du| x),$$ 
where $f:\calX\times\Act\to\R$ is a bounded, Borel-measurable map. Hence we can also define controlled Markov process associated
to these randomized Markov controls. Given a Markov control $v\in\Uadm$, we denote the corresponding controlled Markov chain as $\{X_n(v)\}_{n\geq 0}$. 
We note that the elements in $\Uadm$ can be seen as randomized Markov controls where 
$u_n(x)\df \delta_{u_n(x)}$. We denote the set of all Markov controls (including randomized) by $\Uadm$
and by $\Usm$ we denote the set of all stationary (including randomized) Markov controls. A stationary Markov policy 
given by the map $u:\calX\to\Act$ is called deterministic stationary Markov control. The set of all deterministic stationary Markov controls
is denoted by $\Udsm$. Thus we have $\Udsm\subset\Usm\subset\Uadm$. One can often endow the space $\Usm$ with a topology that renders it a
compact metric space. We assume that
\begin{itemize}
\item[{\bf (A2)}]there exists a metric $\D_M$ on $\Usm$ such that $(\Usm, \D_M)$ becomes a compact metric space. Also, if
$v^n(x)\to v(x)$ as $n\to\infty$, for $x\in\calX$, for some stationary Markov controls $v^n, v\in\Usm$, then $\D_M(v^n, v)\to 0$ , as $n\to\infty$.
Furthermore, for every bounded, continuous $f:\calX\times\Act\to\R$, $k\in\NN$, the map
$(x, v)\in\calX\times\Usm\mapsto (\Exp_x[f(X_1, v(X_1))], \ldots, \Exp_x[f(X_k, v(X_k))])$ is a continuous.
\end{itemize}
Let us remark that the above hypothesis (A2) is quiet natural and generally satisfied by large class of Markov processes. If there is $\sigma$-finite, non-negative, regular Borel measure $\iota$ on $\calX$, we can define
the metric $\D_M$ on $\Usm$ viewing it as a subset of the unit ball in $L^\infty (\calX, \calM(\Act))$ with respect
to the measure $\iota$. Note that the unit ball of $L^\infty (\calX, \calM(\Act))$ is compact (by Banach-Alaoglu theorem) and metrizable
(since $L^1(\calX, \calC(\Act))$ is separable). If $\calX$ is countable , then we may take $\iota$ to be the counting measure. In this case,
convergence in $\D_M$ is equivalent to the pointwise convergence. See \cite[Section~2.4]{ari-bor-ghosh} for a similar discussion
in a particular settings.

For $A\in\calB(\calX)$ we define the return time to $A$ as
$$\uptau(A) \df \min\{n\geq 1\; :\; X_n\in A\}.$$
We would generally suppress the dependency of the Markov control in the notation of $\uptau$.
We shall use the notation $\Exp^v$ for the expectation with respect to the law of the controlled Markov process $\{X(v)\}$.
The following assumption ensure that every controlled Markov process is \textit{stable}.
\begin{itemize}
\item[{\bf (A3)}]
Every controlled Markov process $\{X(v)\}$, $v\in\Usm$, aperiodic, Harris recurrent and $\Lambda$-irreducible,
 for some non-negative
Borel-measure $\Lambda$ with $\Lambda(\calX)>0$. Moreover, there exists a probability measure $\nu$, $\nu(C)=1$, such that for any $A\in\calB(\calX)$, we have
\begin{equation}\label{02}
\inf_{u\in\Act}\, \inf_{x\in C} P(A | x, u)\geq \gamma \; \nu(A),
\end{equation}
for some positive constant $\gamma\in(0,1)$.
\end{itemize}
Condition \eqref{02} is generally referred to as the \emph{minorization} condition \cite{meyn-tweedie}. In view of \eqref{02}, $C$ is a
petit set and thus there exists a unique invariant probability measure for every controlled Markov process \cite[Theorem~13.01]{meyn-tweedie}. For $v\in\Usm$, we have unique invariant measure $\eta_v$
such that for all bounded Borel-measurable $f:\calX\to\R$ we have
$$\int_{\calX} f(y)\, \eta_v (dy)\; =\; \int_{\calX} \Exp_y[X_1(v)]\, \eta_v(dy).$$
Define
$$\eG\df \{\eta\in\calP(\calX)\; :\; \eta=\eta_v\; \text{for some}\; v\in\Usm\}.$$
Thus $\eG$ denote the set of all invariant probability measures. By \eqref{01} we can find $\kappa>0$ such that
\begin{equation}\label{0303}
\sup_{\mu\in\eG}\int_\calX \calV(x)\, \mu(dx)\; <\; \kappa.
\end{equation}
 \eqref{02} also implies that every stationary controlled Markov process is strongly aperiodic. Condition \eqref{02} can be seen as the counterpart of the non-degeneracy condition for controlled diffusion processes. For controlled diffusion, uniform non-degeneracy condition
is used to obtained Harnack inequality and equicontinuity property of the value functions 
(see \cite{anup-ari, ari-bor-ghosh}). In our set up, condition \eqref{02} would play a similar role, see Lemma~\ref{L6.1} below.

Now we introduce the cost function. To do so we need to specify the metric on the probability space
$\calP(\calX)$. Let $0$ be a point in $\calX$. For $p\geq 1$, we define
$$\calP_p\; \df\; \Big\{\mu\in\calP(\calX)\; :\; \int_{\calX}\, \big(\D_{\calX}(x, 0)\big)^p\, \mu(dx)<\infty\Big\}.$$
We observe that if $\D_\calX$ is a bounded metric then $\calP_p(\calX)=\calP(\calX)$ for $p\geq 1$. We shall assume that
\begin{itemize}
\item[{\bf (A4)}] for some $p\in[1, \infty)$, we have $\big(\D_\calX(\cdot, 0)\big)^p\in\sorder(\calV)$, where
$\calV$ is given by (A1).
\end{itemize}
This condition is used in various places below. One key place where this condition becomes crucial is to justify certain 
convergence of measures. Condition (A4) also allows some flexibility. For example, if we replace $\D_\calX$ by 
$\D_\calX\wedge 1$, we still retain the properties of $(\calX, \D_\calX)$ (completeness, $\sigma$-compactness, etc), but
condition (A4) holds for any $p>0$. Now we define the Wasserstein metric on $\calP_p(\calX)$
$$\eD_p(\mu_1, \mu_2) \df \inf \Big\{\int_{\calX\times\calX} (\D_\calX(y_1,y_2))^p\Theta(dy_1, dy_2)\; :\; 
\Theta\in\calP(\calX\times\calX)\; \text{has marginals}\, \mu_1, \mu_2 \Big\}^{\frac{1}{p}}.$$
It is well known that $(\calP_p(\calX), \eD_p)$ is a Polish space for $p\geq 1$. If $\D_\calX$ is bounded then $\eD_p$
gives the metric of weak convergence in $\calP(\calX)$ \cite[Theorem~7.12]{villani}. The cost function is given by the map
$$r:\calX\times\Act\times\calP_p(\calX)\to \R_+.$$
We assume that $r$ is continuous function. The following assumptions are also imposed
on $r$:
\begin{itemize}
\item[{\bf (A5)}] 
\begin{enumerate}
\item for every compact $\calK\subset\calP_p(\calX)$, $p$ is chosen from (A4), we have
$$\sup_{(u, \mu)\in\Act\times\calK} r(\cdot, u, \mu)\; \in\; \sorder(\calV)\, ;$$
\item there exists lsc functions $g_0, g_1:\calX\to\R_+$, $g_0, g_1\in\sorder(\calV)$, satisfying
\begin{equation}\label{03}
r(x, u, \frac{1}{N}\sum_{i=1}^N \del_{y_i})\; \leq \; \Big(g_0(x) + \frac{1}{N} \sum_{i=1}^N g_1(y_i)\Big),
\quad \text{for all}\, u\in\Act, \; N\geq 1,
\end{equation}
and $x, y_1, \ldots, y_N\in\calX$;
\item there exists $q\in[1, p]$ such that for any compact $K\subset\calX$, there exists positive constant $\kappa_K$ satisfying the following: for $\mu_1, \mu_2\in\calP_p(\calX)$,
\begin{equation}\label{04}
\abs{r(x, u, \mu_1)-r(x, u, \mu_2)}\; \leq \; \kappa_K\Big(1+ \int_{\calX} \big(\D_\calX(y, 0)\big)^q \mu_1(dy)
+\int_{\calX} \big(\D_\calX(y, 0)\big)^q \mu_1(dy)\Big)^{q'}\, \eD_q(\mu_1, \mu_2),
\end{equation}
for all $x\in K$ and $u\in\Act$ where $q'=\frac{q-1}{q}$.  In addition, for any $(x, u)\in\calX\times\Act$ and any compact set $K$ there exists constant $\theta$, may depend
 on $x, u, K$, such that 
 \begin{equation}\label{extra}
 \abs{r(x, u, \frac{1}{N}\sum \del_{y_i})- r(x, u, \frac{1}{N-1}\sum \del_{y_i})}\leq \frac{\theta}{N}, \quad y_i\in K, \; i=1, \ldots, N.
\end{equation}
\end{enumerate}
\end{itemize}
In Example~\ref{E-1} and Example~\ref{E-2},  we show that assumptions in (A5) are satisfied by a large class of cost functions. To prove existence of a mean field game
solutions we do not need all the assumptions in (A5). All these conditions are used to justify the convergence
of equilibrium from $N$-person game to a MFG solution.
We also assume that
\begin{itemize}
\item[{\bf (A6)}] if $v_n\to v$ as $n\to \infty$ in $(\Usm, \D_M)$, then $\norm{\eta_{v^n}-\eta_{v}}_{TV}\to 0$ as $n\to\infty$ where $\eta_{v_n},
\eta_v$ are the invariant measures corresponding to the stationary Markov controls $v_n, v,$ respectively.
\end{itemize}
In view of (A1), we note that $\eG$ is tight and therefore $\eta_{v^n}$ weakly converges to $\eta_{v}$. If $\calX$ is countable (discrete), this implies that
$\norm{\eta_{v^n}-\eta_{v}}_{TV}\to 0$ as $n\to\infty$.

Let us now give some examples of cost functions that satisfy (A5).
\begin{example}\label{E-1}
Let $r(x, u, \mu) = R(x, u, \zeta(x, \mu))$ where $R:\calX\times\Act\times\R$ is a continuous function and for every compact $K\subset\calX$ there exists 
constant $\gamma_K$ satisfying
\begin{equation}\label{Eg1}
\abs{R(x, u, z)-R(x, u, z_1)}\; \leq \; \gamma_K\,  \abs{z-z_1}, \quad \text{for}\;  \; z, \, z_1\in\R, \quad (x, u)\in 
K\times\Act.
\end{equation}
Also suppose that for some $g_0\in\sorder(\calV)$ we have
$$R(x, u, z) \leq g_0(x) + \kappa \abs{z}, \quad \forall \; x\in\calX, \;  z\in\R,$$
for some constant $\kappa>0$.
Let $m\in\NN$ be fixed, and $\zeta(x, \mu)$ is given as follows
$$\zeta(x, \mu)\;\df\; \underbrace{\int_{\calX}\cdots\int_{\calX}}_{m\, \text{times}} \varphi(\D_\calX(x, y_j))\, \Pi_{j=1}^m \mu(dy_j),$$
where $\varphi:\R\to\R$ is bounded and Lipschitz continuous. Thus (A5)(2) is satisfied by $r$.To see \eqref{04} holds, we consider 
$\mu, \tilde{\mu}\in\calP(\calX)$. We let $m=2$ for simplicity. Due to \eqref{Eg1}, we only need to compute the following.
Let $\Theta\in\calP(\calX\times\calX)$ be such that $\mu, \tilde{\mu}$ are its marginals. Then
\begin{align*}
\zeta(x, \mu_1)- \zeta(x, \mu) &= \int_{\calX\times\calX} \Pi_{j=1}^2\varphi(\D_{\calX}(x, y_j))\,  \mu(dy_1)\mu(dy_2)
-\int_{\calX\times\calX} \Pi_{j=1}^2\varphi(\D_\calX(x, y_j))\,  \tilde{\mu}(dy_1)\tilde{\mu}(dy_2)
\\
&= \int_{\calX} \varphi(\D_\calX(x, y_1))\int_{\calX^2}(\varphi(\D_\calX(x, y_2))-\varphi(\D_\calX(x, y_3))) \Theta(dy_2, dy_3) \mu(dy_1)
\\
&\, \quad + \int_{\calX} \varphi(\D_\calX(x, y_3))\int_{\calX^2}(\varphi(\D_\calX(x, y_1))-\varphi(\D_\calX(x, y_2))) \Theta(dy_1, dy_2) \tilde{\mu}(dy_3)
\\
&\leq 2\,\abs{\varphi}_\infty\, \abs{\varphi}_{Lip} \int_{\calX^2} \D_\calX(y_2, y_3)\, \Theta(dy_2, dy_3),
\end{align*}
where $\abs{\varphi}_\infty,\, \abs{\varphi}_{Lip}$ denote the supremum norm of $\varphi$ and respectively, the
Lipschitz constant of $\varphi$.
$\Theta$ being arbitrary we have \eqref{04} with $q=1$. \eqref{extra} is easy to check.
\end{example}

\begin{example}\label{E-2}
Consider $r$ and $R$ as defined in Example~\ref{E-1}. Define for continuous $\varphi:\calX\times\calX\to R_+$,
$$\zeta(x, \mu)\;\df\; \int_{\calX}\varphi(x, y) \, \mu(dy),$$
where for some $q\in[1, p]$ and $x\in K$, $K\subset\calX$ compact, we have
\begin{equation}\label{Eg2}
\abs{\varphi(x, y)-\varphi(x, y_1)} \leq \kappa_K\, (1+\D^{q-1}_\calX(0, y) + \D^{q-1}_\calX(0, y_1))\, \D_\calX(y, y_1),
\end{equation}
for $x\in K$. Also assume that for some $\tilde{g}_0\in\sorder(\calV)$ we have 
\begin{equation}\label{Eg3}
\abs{\varphi(x, y)} \leq \tilde{g}_0(x) + \tilde{g}_0(y), \quad x, y\in\calX.
\end{equation}
From \eqref{Eg3} we have $\varphi(x, \cdot)\in\sorder(\calV)$ uniformly for $x$ in some compact subset of $\calX$.
We see from \eqref{Eg3} that (A2)(2) holds. To see that (A5)(1) holds we note that for any compact set 
$\calK\subset\calX$, we have $\sup_{\mu\in\calK}\int (\D_\calX(x, 0))^p \mu(dx)<\infty$ by 
\cite[Theorem~7.12]{villani}. Therefore using (A4) we have $\sup_{\mu\in\calK}\int \calV(x) \mu(dx)<\infty$ which
combined with \eqref{Eg3} gives (A5)(1).
Now let $\Theta$ be an element of $\calP(\calX\times\calX)$ with marginals $\mu, \tilde{\mu}$. Then for $x\in K$,
using \eqref{Eg2} we have
\begin{align*}
\zeta(x, \mu)-\zeta(x, \tilde{\mu}) & = \int_{\calX\times\calX} \Big(\varphi(x, y)-\varphi(x, y_1)\Big) \, \Theta(dy, dy_1)
\\ 
&\leq \kappa_K\, \int_{\calX\times\calX}  (1+\D^{q-1}_\calX(0, y) + \D^{q-1}_\calX(0, y_1))\, \D_\calX(y, y_1) \, \Theta(dy, dy_1)
\\
&\leq \kappa_1 \Big[\int_{\calX\times\calX}  (1+\D^{q}_\calX(0, y) + \D^{q}_\calX(0, y_1))\, \Theta(dy, dy_1)\Big]^{\frac{q-1}{q}} 
\\
&\, \qquad \Big[\int_{\calX\times\calX} (\D_\calX(y, y_1))^q \Theta(dy, dy_1)\Big]^{\frac{1}{q}},
\end{align*}
where in the last line we use H\"{o}lder inequality. $\Theta$ begin arbitrary we get from above that
\begin{align*}
\abs{\zeta(x, \mu)-\zeta(x, \tilde{\mu})}\leq \kappa_1 \Big[(1+\int_{\calX}\D^{q}_\calX(0, y)\mu(dy) + 
\int_{\calX} \D^{q}_\calX(0, y)\, \tilde{\mu}(dy))\Big]^{q'} \eD_q(\mu, \tilde{\mu}).
\end{align*} 
Thus \eqref{04} is satisfied. One can also check that \eqref{extra} holds.
\end{example}

Before we conclude the section let us introduce the Lemma~\ref{L5.1}.
For compact set $K\subset\calX$ containing $0$ we define the following map,
\[\mathfrak{P}(x)=\mathfrak{P}_K(x)\; \df\;
\left\{\begin{array}{lll}
x & \text{if}\; x\in K,
\\[2mm]
0 & \text{otherwise}.
\end{array}
\right.
\]
For $\mu\in\calP(\calX)$, we define $\tilde{\mu}_K(B)=\mu(\mathfrak{P}_K^{-1}(B))$ for all $B\in\calB(\calX)$.
Then the following result follows by mimicking the arguments in \cite[Lemma~4.1]{anup-ari} and using (A4), (A5).
\begin{lemma}\label{L5.1}
Let $\eps>0$ be given. Then for any compact $K\subset\calX$, there exists compact $\tilde{K}=\tilde{K}(K)\subset\calX$
 such that if $(\mu_1, \ldots, \mu_N)\in\eG^N$ then
$$\sup_{(x, u)\in K\times\Act}\Big|\int_{(\calX)^N}r(x, u, \frac{1}{N}\sum_{j=1}^N \delta_{y_j}) \Pi_{j=1}^N \mu_j(dy_j) -
\int_{(\calX)^N}r(x, u, \frac{1}{N}\sum_{j=1}^N \delta_{y_j}) \Pi_{j=1}^N \tilde\mu_j(dy_j)\Big| \leq \eps,$$
for all $N\geq 1$ where $\tilde{\mu}_j= (\tilde{\mu}_j)_{\tilde{K}}$.
\end{lemma}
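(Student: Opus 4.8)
The plan is to replace the two iterated integrals by a single integral of their difference, bound the integrand by means of the Wasserstein continuity estimate \eqref{04}, and then enlarge $K$ to a bigger compact $\tilde K$ so that the residual error is small \emph{uniformly in} $N$; the last point is the only genuine difficulty.

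First I would use that, by construction, $\tilde\mu_j$ is the image of $\mu_j$ under the map $\mathfrak{P}_{\tilde K}$, so the change-of-variables formula gives, for every $(x,u)$,
$$\int_{\calX^N} r\Bigl(x,u,\tfrac1N\sum_{j=1}^N\del_{y_j}\Bigr)\prod_{j=1}^N\tilde\mu_j(dy_j)=\int_{\calX^N} r\Bigl(x,u,\tfrac1N\sum_{j=1}^N\del_{\mathfrak{P}_{\tilde K}(y_j)}\Bigr)\prod_{j=1}^N\mu_j(dy_j),$$
both sides being finite by \eqref{03} and \eqref{0303}. Writing $h_N(y)\df\sup_{(x,u)\in K\times\Act}\babs{r(x,u,\tfrac1N\sum\del_{y_j})-r(x,u,\tfrac1N\sum\del_{\mathfrak{P}_{\tilde K}(y_j)})}$ for $y=(y_1,\dots,y_N)$, it then suffices to arrange $\int_{\calX^N}h_N\,\prod_j\mu_j(dy_j)\le\eps$. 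Next I would estimate $h_N$ pointwise: for fixed $y$, apply \eqref{04} with the compact set $K$ and with the two empirical measures $\tfrac1N\sum\del_{y_j}$ and $\tfrac1N\sum\del_{\mathfrak{P}_{\tilde K}(y_j)}$ (both in $\calP_p(\calX)$). Since $\mathfrak{P}_{\tilde K}$ fixes $\tilde K$ and sends $\calX\setminus\tilde K$ to $0$, the diagonal coupling $\tfrac1N\sum\del_{(y_j,\mathfrak{P}_{\tilde K}(y_j))}$ shows that $\eD_q$ between those measures is at most $W^{1/q}$, with $W\df\tfrac1N\sum_j(\D_\calX(y_j,0))^q\Ind_{\tilde K^c}(y_j)$, while each of their $q$-th moments is at most $Z\df\tfrac1N\sum_j(\D_\calX(y_j,0))^q$. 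Hence \eqref{04} gives the $(x,u)$-free bound $h_N(y)\le\kappa_K(1+2Z)^{q'}W^{1/q}$.

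It then remains to integrate and to choose $\tilde K$. Integrating and using H\"older's inequality with the conjugate exponents $1/q'$ and $q$ (the case $q=1$ being trivial, as the moment factor is then absent) yields
$$\int_{\calX^N}h_N\prod_j\mu_j\;\le\;\kappa_K\Bigl(\int_{\calX^N}(1+2Z)\prod_j\mu_j\Bigr)^{q'}\Bigl(\int_{\calX^N}W\prod_j\mu_j\Bigr)^{1/q}.$$
From (A4) together with $q\le p$, the inf-compactness of $\calV$, and the continuity of $\D_\calX(\cdot,0)$, one first checks $(\D_\calX(\cdot,0))^q\in\sorder(\calV)$, in particular $(\D_\calX(\cdot,0))^q\le\kappa_1\calV$ for some constant; hence by \eqref{0303},
$$\int_{\calX^N}(1+2Z)\prod_j\mu_j=1+\tfrac2N\sum_j\int(\D_\calX(\cdot,0))^q\,d\mu_j\le 1+2\kappa_1\kappa,$$
a constant free of $N$ and of $(\mu_1,\dots,\mu_N)\in\eG^N$. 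On the other hand $\int_{\calX^N}W\prod_j\mu_j=\tfrac1N\sum_j\int_{\calX\setminus\tilde K}(\D_\calX(y,0))^q\,\mu_j(dy)\le\kappa\,\sup_{z\in\calX\setminus\tilde K}(\D_\calX(z,0))^q/\calV(z)$, again by \eqref{0303}; since $(\D_\calX(\cdot,0))^q\in\sorder(\calV)$, this supremum tends to $0$ along the exhausting family $\{K_n\}$ of \eqref{0001}. Taking $\tilde K=K_{n_0}$ with $n_0$ large (depending on $K$ and $\eps$ through $\kappa_K,\kappa_1,\kappa$) then pushes the whole product below $\eps$, uniformly in $N\ge1$, in $(x,u)\in K\times\Act$ and in $(\mu_1,\dots,\mu_N)\in\eG^N$.

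The hard part is exactly this uniformity in $N$. A naive estimate would simply discard the event $\{\exists j:\ y_j\notin\tilde K\}$, paying a union bound $\sum_j\mu_j(\tilde K^c)$ that grows linearly in $N$ and is useless; \eqref{04} is tailored to avoid this, since transporting all offending coordinates to $0$ perturbs the empirical measure by only the \emph{average} $W^{1/q}$, and — thanks to the H\"older split into exponents $1/q'$ and $q$ — the accompanying $q$-th moment enters only linearly and is absorbed by the $N$-independent bound \eqref{0303}. The remaining verifications (the pushforward identity, the coupling estimate for $\eD_q$, and $(\D_\calX(\cdot,0))^q\in\sorder(\calV)$) are routine.
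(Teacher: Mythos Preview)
Your argument is correct and follows essentially the same route as the paper's proof: the pushforward identity for $\tilde\mu_j$, the pointwise bound via \eqref{04}, the diagonal coupling estimate for $\eD_q$ between the two empirical measures, the H\"older split with exponents $q/(q-1)$ and $q$, and the final uniform-in-$N$ control through \eqref{0303} and (A4) all appear in the paper in the same order. The only cosmetic difference is that the paper bounds $\int_{\tilde K^c}\D_\calX(y,0)^q\,d\mu_j$ by passing first to the $p$-th moment via H\"older and then invoking (A4), whereas you go directly through $(\D_\calX(\cdot,0))^q\in\sorder(\calV)$ (which, as you note, needs the inf-compactness of $\calV$ to handle the region where $\D_\calX(\cdot,0)<1$); both are equally valid.
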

\begin{proof}
We note that for $x_j, y_j\in\calX, 1\leq j\leq N$, we have for any $p\geq 1$,
\begin{equation}\label{E1}
\textstyle\eD_p\Bigl(\frac{1}{N}\sum_{j=1}^N\delta_{x_j},
\frac{1}{N}\sum_{j=1}^N\delta_{y_j}\Bigr)\;\leq\;
\Bigl(\frac{1}{N}\sum_{i=1}^N (\D_\calX(x_j,y_j))^p\Bigr)^{\nicefrac{1}{p}}.
\end{equation}
For any compact $\tilde{K}$ we denote $\tilde{\mu}_j= (\tilde{\mu}_j)_{\tilde{K}}$. Then for all $(x, u)\in K\times\Act$, we have
\begin{align*}
& \Big|\int_{\calX^{N}} r(x, u, \frac{1}{N}\sum_{j=1}^N\delta_{y_j})\Pi_{j=1}^n \mu_j(d{y_j})
- \int_{\calX^{N}} r(x, u, \frac{1}{N}\sum_{j=1}^N\delta_{y_j})\Pi_{j=1}^n \tilde{\mu}_j(d{y_j})\Big|
\\
&=\; \Big|\int_{\calX^{N}} \Bigl(r(x, u, \frac{1}{N}\sum_{j=1}^N\delta_{y_j})
-  r(x, u, \frac{1}{N}\sum_{j=1}^N\delta_{\mathfrak{P}_{\tilde{K}}(y_j)})\Bigr)\Pi_{j=1}^n \mu_j(d{y_j})\Big|
\\
&\leq\; \kappa_K \Big|\int_{\calX^{N}} \Bigl(\Big(1+ \frac{1}{N}\sum_{j=1}^N (\D_\calX(y_j, 0))^q 
+ \frac{1}{N}\sum_{j=1}^N (\D_\calX(\mathfrak{P}_{\tilde{K}}(y_j), 0)^q\Big)^{q'}
\\
&\, \hspace{2in} \eD_q\big(\frac{1}{N}\sum_{j=1}^N\delta_{y_j}, \frac{1}{N}\sum_{j=1}^N\delta_{\mathfrak{P}_{\tilde{K}}(y_j)}\big) \Bigr)\Pi_{j=1}^n \mu_j(d{y_j})\Big|
\\
&\leq\; \kappa_K \Big|\int_{\calX^{N}} \big(1+ \frac{1}{N}\sum_{j=1}^N (\D_\calX(y_j, 0))^q + \frac{1}{N}\sum_{j=1}^N (\D_\calX(\mathfrak{P}_{\tilde{K}}(y_j), 0)^q) \big)\Pi_{j=1}^N \mu_j(d{y_j})\Big|^{q'}
\\
&\, \qquad \times\; \Big|\int_{\calX^{N}} \Bigl[\eD_q\big(\frac{1}{N}\sum_{j=1}^N\delta_{y_j}, \frac{1}{N}\sum_{j=1}^N\delta_{\mathfrak{P}_{\tilde{K}}(y_j)}\big)\Bigr]^q \Pi_{j=1}^N \mu_j(d{y_j})
\Big|^{1/q}
\\
&\leq\; \kappa_2 \Big|\int_{\calX^{N}} \frac{1}{N}\sum_{j=1}^N \D_\calX(y_j,\mathfrak{P}_{\tilde{K}}(y_j))^q\, \Pi_{j=1}^N\mu_j(d{y_j})\Big|^{1/q}
\\
&=\; \kappa_2 \Big| \frac{1}{N}\sum_{i=1}^N \int_{\tilde{K}^c}\D_\calX(y_j, 0)^q\,  \mu_j(d{y_j})\Big|^{1/q}
\end{align*}
where in the third inequality we use (A4), \eqref{0303}, \eqref{E1}. Now we use (A4) and choose $\tilde{K}$ to be large enough to satisfy
$$\int_{\tilde{K}^c} (\D_{\calX}(y_j, 0))^q \mu_j(dy_j)\leq 
\Big[\int_{\tilde{K}^c} (\D_{\calX}(y_j, 0))^p \mu_j(dy_j)\Big]^{\frac{q}{p}}\leq \eps.$$
Thus combining above to display we have the result.
\end{proof}
In view of Lemma~\ref{L5.1}, we obtain that for $\mu_1, \ldots, \mu_N\in\eG$ the following map
\begin{equation}\label{007}
(x, u)\mapsto \int_{\calX\times\cdots\times\calX} r(x, u, \frac{1}{N}\sum_{i=1}^N \delta_{y^i})\, \Pi_{i=1}^N \mu_i(dy^i),\end{equation}
is continuous.

\section{\bf Mean Field Games}\label{S-MFG}
In this section we introduce the mean field games (MFG) for the limiting dynamics of $N$-person games. The ergodic cost of the player is given 
by 
\begin{equation}\label{cost}
J(x, v, \mu)\; \df\; \limsup_{n\to\infty} \frac{1}{n}\; \Exp^v_x\Big[\sum_{i=0}^{n-1} r(X_i, v_i(X_i), \mu)\Big].
\end{equation}
Here $v\in\Uadm$ and $\mu\in\calP_p(\calX)$. The goal of the player is to optimize the cost \eqref{cost} given $\mu$. Define
\begin{equation}\label{05}
\varrho_\mu\; \df\; \inf_{v\in\Uadm} \, J(x, v, \mu).
\end{equation}
Note that $\varrho_\mu$ does not depend on $x$. The cost being ergodic, it is intuitive that $J(x, v, \mu)$ should depend on the \emph{equilibrium}
distribution of the corresponding process $\{X(v)\}$. For every $\mu\in\calP_p(\calX)$, we define
$$r_\mu\colon \calX\times\Act\to \R_+, \quad \text{as}\quad r_\mu(x, u)= r(x, u, \mu).$$
Our first result is the following
\begin{theorem}\label{Thm-1}
Let (A1)--(A3), (A5)(1) hold.
There exists a unique pair $(V_\mu, \varrho)\in\calC(\calX)\times\R, \, V(0)=0,\, V\in\sorder(\calV)$ that satisfies
\begin{equation}\label{06}
V_\mu(x) + \varrho \; =\; \min_{u\in\Act}\Big\{r_\mu(x, u) + \int_{\calX} V_\mu(y)\, P(dy|x, u)\Big\}.
\end{equation}
Moreover, we have $\varrho=\varrho_\mu$. The $v\in\Usm$ is optimal for \eqref{05} if and only if it is a minimizing selector
of \eqref{06}.
\end{theorem}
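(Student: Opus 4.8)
The plan is to establish the average‑cost optimality equation \eqref{06} by the vanishing‑discount method. For $\al\in(0,1)$ set
\begin{equation*}
V_\al(x)\;\df\;\inf_{v\in\Uadm}\Exp^v_x\Bigl[\sum_{n\ge0}\al^n\,r_\mu(X_n,v_n(X_n))\Bigr].
\end{equation*}
Iterating the drift inequality \eqref{01} (which holds for every, possibly randomized, control) gives $\sup_{v,n}\Exp^v_x[\calV(X_n)]<\infty$ for each $x$, and since $\sup_u r_\mu(\cdot,u)\in\sorder(\calV)$ by (A5)(1) we have $0\le r_\mu\le\kappa(1+\calV)$; hence $V_\al(x)\le\kappa(1+\calV(x))/(1-\al)<\infty$. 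Standard discrete‑time discounted dynamic programming — using compactness of $\Act$, continuity of $r$, and the continuity property in (A2) — then shows that $V_\al$ is the minimal (and, in the relevant weighted class, unique) solution of the discounted dynamic programming equation $V_\al(x)=\min_{u\in\Act}\{r_\mu(x,u)+\al\int_\calX V_\al(y)\,P(dy|x,u)\}$ and that it admits a measurable minimizing selector $v_\al\in\Udsm$.

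Fix a reference point $x_0\in C$ and put $\bar V_\al\df V_\al-V_\al(x_0)$. The heart of the proof is a triple of estimates, uniform over $\al\in[\tfrac12,1)$: (a) $(1-\al)V_\al(x_0)$ is bounded; (b) $\abs{\bar V_\al(x)}\le\kappa(1+\calV(x))$, and in fact $\bar V_\al\in\sorder(\calV)$ with a modulus independent of $\al$; (c) $\{\bar V_\al\}$ is equicontinuous on every compact subset of $\calX$. Estimate (a) follows by bounding $V_\al(x_0)$ by $\Exp^v_{x_0}[\sum_n\al^n r_\mu(X_n,v(X_n))]$ for any fixed stationary $v$ and invoking the uniform‑in‑$n$ bound on $\Exp^v_{x_0}[\calV(X_n)]$ coming from \eqref{01}. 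For (b) the Comparison Theorem applied to \eqref{01} bounds the discounted cost accrued before the chain returns to $C$ by a constant multiple of $\calV(x)$, while the minorization \eqref{02} lets one couple the optimally controlled chains started at $x$ and at $x_0$: via the split‑chain construction the common component $\gamma\nu$ in $P(\cdot|y,u)$, $y\in C$, makes the two chains coalesce after a geometric number of visits to $C$ and stay together afterwards, so $\bar V_\al(x)$ is controlled by the running cost up to coalescence; feeding in $r_\mu\in\sorder(\calV)$ upgrades this to the $\sorder(\calV)$‑statement. Estimate (c) is the discrete‑time analogue of the Harnack‑type equicontinuity used for nondegenerate controlled diffusions; it is extracted from \eqref{02} through the split chain and is precisely the content of Lemma~\ref{L6.1}. \emph{I expect (c) to be the main obstacle}, as it is the step that genuinely requires constructing the split chain and exploiting the minorization uniformly in the control and in $\al$.

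Granting (a)--(c), the Arzel\`a--Ascoli theorem together with a diagonal argument over an exhausting sequence of compacta produces $\al_k\uparrow1$ along which $\bar V_{\al_k}\to V_\mu$ locally uniformly (hence pointwise), with $V_\mu\in\sorder(\calV)$, and $(1-\al_k)V_{\al_k}(x_0)\to\varrho$ for some $\varrho\in\R$. Rewriting the discounted equation as $\bar V_{\al_k}(x)+(1-\al_k)V_{\al_k}(x_0)=\min_{u\in\Act}\{r_\mu(x,u)+\al_k\int_\calX\bar V_{\al_k}(y)\,P(dy|x,u)\}$ and sending $k\to\infty$ — using continuity of $r$, the uniform bound $\abs{\bar V_{\al_k}}\le\kappa(1+\calV)$ together with $\sup_u\int_\calX\calV(y)P(dy|x,u)<\infty$ (from \eqref{01}) for a Fatou/dominated‑convergence control of the tails, and compactness of $\Act$ with the continuity in (A2) to pass the minimum — yields $V_\mu(x)+\varrho=\min_{u\in\Act}\{r_\mu(x,u)+\int_\calX V_\mu(y)P(dy|x,u)\}$. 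Replacing $V_\mu$ by $V_\mu-V_\mu(0)$ normalizes $V_\mu(0)=0$ and leaves $\varrho$ unchanged.

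It remains to identify $\varrho$ and to characterize optimality. Iterating the inequality $V_\mu(x)+\varrho\le r_\mu(x,v_0(x))+\int_\calX V_\mu(y)P(dy|x,v_0(x))$ along an arbitrary $v\in\Uadm$ gives $V_\mu(x)+n\varrho\le\Exp^v_x[\sum_{i=0}^{n-1}r_\mu(X_i,v_i(X_i))]+\Exp^v_x[V_\mu(X_n)]$; dividing by $n$ and using $\abs{V_\mu}\le\kappa(1+\calV)$ with $\sup_n\Exp^v_x[\calV(X_n)]<\infty$, so that $n^{-1}\Exp^v_x[V_\mu(X_n)]\to0$, we obtain $\varrho\le J(x,v,\mu)$ and hence $\varrho\le\varrho_\mu$. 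Taking $v=v^\ast$ a minimizing selector of \eqref{06} — which exists by a measurable selection theorem, the minimand being lsc in $u$ (as $r_\mu$ is continuous and $V_\mu$ is bounded below) with $\Act$ compact — turns the iterated relation into an equality, so $J(x,v^\ast,\mu)=\varrho$; thus $\varrho=\varrho_\mu$ and $v^\ast$ is optimal. Conversely, if $v\in\Usm$ is optimal, integrating \eqref{06} against its invariant measure $\eta_v$ (legitimate since $\eta_v(\calV)<\kappa$ by \eqref{0303}) and using invariance to cancel the $V_\mu$‑terms leaves $\int_\calX\bigl[r_\mu(x,v(x))+\int_\calX V_\mu(y)P(dy|x,v(x))-V_\mu(x)-\varrho\bigr]\eta_v(dx)=J(x,v,\mu)-\varrho=0$; the integrand being nonnegative by \eqref{06}, it vanishes $\eta_v$‑a.e., i.e.\ $v$ is a minimizing selector. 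Finally, for uniqueness: if $(\tilde V,\tilde\varrho)$ is another solution with the stated properties, the argument above yields $\tilde\varrho=\varrho_\mu=\varrho$, and setting $W\df\tilde V-V_\mu\in\sorder(\calV)$ and comparing \eqref{06} for the two solutions along their respective minimizing selectors shows $W$ is superharmonic for the transition kernel of one of the (positive Harris recurrent) optimally controlled chains and subharmonic for the other; since $W$ is integrable against the corresponding invariant measures, the maximum principle for recurrent Markov chains forces $W$ to be constant, and $W(0)=0$ then gives $W\equiv0$.
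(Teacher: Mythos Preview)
Your vanishing-discount programme matches the paper's: discounted value functions $V^\alpha_\mu$, equicontinuity via the split-chain coupling (this is exactly Lemma~\ref{L6.1}, and you rightly flag it as the crux), uniform $\sorder(\calV)$ bounds, limit passage in the discounted equation, identification $\varrho=\varrho_\mu$, and the $\eta_v$-a.e.\ characterization of optimal selectors all coincide with Section~\ref{Ergodic HJB}.

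The one substantive divergence is the uniqueness step. You take $W=\tilde V-V_\mu$, observe it is subharmonic for one optimally controlled kernel and superharmonic for the other, bound it via geometric ergodicity, and then invoke the maximum principle for Harris-recurrent chains to force $W$ constant; this is valid and arguably lighter for Theorem~\ref{Thm-1} in isolation. The paper instead shows that \emph{every} $\sorder(\calV)$-solution of \eqref{06} satisfies the hitting-time representation
\[
V(x)\;=\;\inf_{v\in\Udsm}\Exp^v_x\Bigl[\sum_{i=0}^{\uptau(\B_\kappa)-1}\bigl(r_\mu(X_i,v(X_i))-\varrho_\mu\bigr)+V\bigl(X_{\uptau(\B_\kappa)}\bigr)\Bigr]
\]
for any small ball $\B_\kappa$ about a point in the support of $\nu$, and then lets $\kappa\to0$ using continuity and $V(0)=V_\mu(0)=0$ to conclude $V=V_\mu$. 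The payoff is that this representation is not a one-off device: it is precisely the formula reused as \eqref{17} and \eqref{27} to obtain $\sorder(\calV)$-control on $V^N_{i,\bm\uppi}$ uniformly in $N$ and to compare $V^N_{i,\bm\uppi}$ with the auxiliary $\tilde V^N$ in Lemma~\ref{L4.3} and Theorem~\ref{Thm-4}. So while your uniqueness argument is sound, the paper's version earns its keep downstream.
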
 
Theorem~\ref{Thm-1} is not something new. Ergodic control problems for controlled Markov chain have been
 studied vastly in literature (see for example, \cite{ari-et-al, lerma-lasserre} and the references therein). In section~\ref{Ergodic HJB} we obtain a representation formula
for $V_\mu$ which plays crucial role in this article. Analogous representation formula under the
settings of controlled diffusion can found in \cite{ari-bor-ghosh}.
We use the method of split chain to show that $\calV_\mu$ is in $\calC(\calX)$ and forms a family of equi-continuous functions if
$\mu$ lies in a compact subset of $\calP_p(\calX)$.

Thus by Theorem~\ref{Thm-1} we have unique $V_\mu$ for every $\mu\in\calP(\calX)$. Since $V_\mu\in \sorder(\calV)$ and
$\int_{\calX} \calV(y)\, P(dy|x, u)$ is finite (by \eqref{01}) we see that
$$u\mapsto \int_{\calX} V_\mu(dy) P(dy|x, u), \quad \textit{is continuous},$$ 
for every $x$. Hence there exists a measurable selector of \eqref{06} by 
\cite[Proposition~7.33]{bertsekas-shreve}. 
A control $v\in\Usm$ is said to be a minimizing selector of \eqref{06} if
\begin{equation}\label{06.5}
V_\mu(x) + \varrho_\mu = \int_{\Act}r_\mu(x, u)v(du|x) + \int_{\calX\times\Act} V_\mu(y) P(dy|x, u)v(du|x),\quad \text{almost surely }\; \eta_v\,. 
\end{equation}
Since any measurable selector of \eqref{06} satisfies \eqref{06.5}, the set of minimizing selector is non-empty.
We define
\begin{equation}\label{07}
\calA(\mu)\df \{\eta\in\eG\, :\, \text{where}\; \eta=\eta_v\; \text{for some minimizing selector }\, v\; \text{of}\; \eqref{06}\}.
\end{equation}
Thus $\mu\mapsto \calA(\mu)$ is set valued.
\begin{definition}\label{defi-1}
A probability measure $\eta\in\eG$ is said to be a MFG solution if $\eta\in\calA(\eta)$ and
 therefore we have a unique $V_\eta\in\calC(\calX)\cap\sorder(\calV)$, depending on $\eta$,
and $v\in\Usm$ satisfying
\begin{align}
\min_{u\in\Act}\Big\{r_\eta(x, u) + \int_{\calX} V_\eta(y)\, P(dy|x, u)\Big\} 
& = r_\eta(x, u) + \int_{\calX} V_\eta(y)\, P(dy|x, u) \nonumber
\\
&=\varrho_\eta + V_\eta(x)
, \;\; \text{almost surely}\; \eta,\label{08}
\end{align}
and,
\begin{align}
\int_{\calX} f(dy) \, \eta(dy) = \int_{\calX} \Exp_y[X_1(v)]\, \eta(dy), \quad \text{for all}\, f\in \calC_b(\calX). \label{09}
\end{align}
\end{definition}
Thus existence of MFG solution is related to the existence of fixed point to the map $\calA$. 
\eqref{08} represents the HJB corresponding to the ergodic  control problem with running cost $r_\eta$ and \eqref{09} characterizes $\eta$ as the invariant measure corresponding to $v$ where $v$ is a minimizing selector of
\eqref{08}.

We show that under above assumptions there exists a fixed point.
\begin{theorem}\label{Thm-2}
Assume that (A1)--(A4), (A5)(1), (A6) holds. Then there exists a MFG solution in the sense of 
Definition~\ref{defi-1}.
\end{theorem}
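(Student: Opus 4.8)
The plan is to realize a mean field game solution as a fixed point of the set‑valued map $\calA$ of \eqref{07} by means of the Kakutani--Fan--Glicksberg theorem. The first task is to fix the right arena. By \eqref{0303} there is $\kappa>0$ with $\sup_{\eta\in\eG}\int_\calX\calV\,\D\eta<\kappa$, so I would take
\[
\mathscr{K}\;\df\;\Bigl\{\eta\in\calP(\calX)\;:\;\int_\calX\calV(x)\,\eta(\D x)\le\kappa\Bigr\}.
\]
Since $\calV$ is lsc and inf‑compact, $\mathscr{K}$ is tight and $\eta\mapsto\int\calV\,\D\eta$ is lsc for the topology of weak convergence, so $\mathscr{K}$ is a nonempty, convex, weakly compact subset of the locally convex Hausdorff space $\bigl(\calM(\calX),\sigma(\calM(\calX),\calC_b(\calX))\bigr)$. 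By (A4) the $p$‑th moment is uniformly integrable over $\mathscr{K}$, so on $\mathscr{K}$ the weak topology coincides with the $\eD_p$‑topology, $\mathscr{K}\subset\calP_p(\calX)$, and $\mu\mapsto r(x,u,\mu)$ (and the functionals below) is continuous there. Because $\calA(\mu)\subset\eG\subset\mathscr{K}$ for every $\mu$, $\calA$ maps $\mathscr{K}$ into $2^{\mathscr{K}}$.

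Next I would record the structural properties of $\calA(\mu)$. Nonemptiness was already noted after \eqref{06.5} (a measurable minimizing selector of \eqref{06} exists). For convexity it is convenient to identify $\calA(\mu)$ with a projection of ergodic occupation measures: let $\eH\subset\calP(\calX\times\Act)$ be the set of measures $\pi(\D x,\D u)=\eta_v(\D x)\,v(\D u|x)$ with $v\in\Usm$; by (A2)--(A3) and \eqref{0303}, $\eH$ is convex and compact. Integrating \eqref{06} against such a $\pi$ and cancelling the $V_\mu$‑terms via the invariance identity shows that $\mathrm{proj}_\calX\pi\in\calA(\mu)$ if and only if $\int r_\mu\,\D\pi=\varrho_\mu=\min_{\pi'\in\eH}\int r_\mu\,\D\pi'$; since $\{\pi\in\eH:\int r_\mu\,\D\pi\le\varrho_\mu\}$ is convex and closed and $\mathrm{proj}_\calX$ is affine and continuous, $\calA(\mu)$ is convex and compact, hence closed in $\mathscr{K}$.

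The heart of the matter is the upper hemicontinuity of $\calA$ (this is Lemma~\ref{L4.3}). Suppose $\mu_n\to\mu$ in $\mathscr{K}$ and $\eta_n\in\calA(\mu_n)$ with $\eta_n\to\eta$. Write $\eta_n=\eta_{v_n}$ for a minimizing selector $v_n$; by compactness of $(\Usm,\D_M)$ pass to a subsequence with $v_n\to v$, and by (A6) $\eta_{v_n}\to\eta_v$ in total variation, whence $\eta=\eta_v$. Using the last clause of (A2) together with $\eta_{v_n}\to\eta_v$ in total variation and the invariance of $\eta_{v_n},\eta_v$, one obtains $\eta_{v_n}\otimes v_n\to\eta_v\otimes v$ weakly in $\calP(\calX\times\Act)$. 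Combining this with the continuity of $\mu\mapsto r(\cdot,\cdot,\mu)$, the domination (A5)(1) and $\sup_n\int\calV\,\D\eta_{v_n}<\infty$, one gets $\int r_{\mu_n}\,\D(\eta_{v_n}\otimes v_n)\to\int r_\mu\,\D(\eta_v\otimes v)$, i.e. $J(0,v_n,\mu_n)\to J(0,v,\mu)$; since $\varrho_{\mu_n}=J(0,v_n,\mu_n)$ by optimality of $v_n$ (Theorem~\ref{Thm-1}), and $\varrho_{\mu_n}\to\varrho_\mu$ (the upper bound $\varrho_{\mu_n}\le J(0,w,\mu_n)\to\varrho_\mu$ for $w$ optimal for $r_\mu$, and $\liminf\varrho_{\mu_n}=\liminf J(0,v_n,\mu_n)=J(0,v,\mu)\ge\varrho_\mu$), we conclude $J(0,v,\mu)=\varrho_\mu$, so $v$ is optimal for $r_\mu$ and hence a minimizing selector of \eqref{06}; therefore $\eta=\eta_v\in\calA(\mu)$. (Equivalently, one may keep track of the value functions: the split‑chain representation behind Theorem~\ref{Thm-1}, the minorization \eqref{02} and (A5)(1) yield a uniform $\sorder(\calV)$‑bound and equicontinuity on compacts for $\{V_{\mu_n}\}$, so $V_{\mu_n}\to V_\mu$ and $\varrho_{\mu_n}\to\varrho_\mu$ on passing to the limit in \eqref{06} and invoking uniqueness — the route indicated in the introduction.)

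Finally, $\mathscr{K}$ is nonempty, convex and compact in a locally convex Hausdorff space and $\calA\colon\mathscr{K}\to 2^{\mathscr{K}}$ is nonempty‑, convex‑ and closed‑valued with closed graph, so the Kakutani--Fan--Glicksberg theorem yields $\eta\in\mathscr{K}$ with $\eta\in\calA(\eta)$; by Definition~\ref{defi-1} this $\eta$ is a MFG solution. I expect the main obstacle to be the upper‑hemicontinuity step — specifically, justifying $\eta_{v_n}\otimes v_n\to\eta_v\otimes v$ and the convergence $\int r_{\mu_n}\,\D(\eta_{v_n}\otimes v_n)\to\int r_\mu\,\D(\eta_v\otimes v)$ on the non‑compact state space with an unbounded running cost, which is where (A2), (A6), (A4), (A5)(1) and the estimates underlying Theorem~\ref{Thm-1} all come into play.
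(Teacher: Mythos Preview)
Your proposal is correct and follows the same overall architecture as the paper: apply the Kakutani--Fan--Glicksberg theorem to the set-valued map $\calA$ on a compact convex set of probability measures, with nonemptiness/convexity of values and upper hemicontinuity as the ingredients. The paper's own proof of Theorem~\ref{Thm-2} is in fact only a sketch pointing to the machinery built for Theorem~\ref{Thm-3} (Lemmas~\ref{L4.1}--\ref{L4.3}).

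There are two tactical differences worth noting. First, you work on the larger moment set $\mathscr{K}=\{\eta:\int\calV\,\D\eta\le\kappa\}$, which is transparently convex, whereas the paper takes $\eG$ itself as the domain and invokes Lemma~\ref{L4.1} (the Radon--Nikodym mixing of controls) to obtain convexity of $\eG$; your choice sidesteps that step for the domain, though you still need the same construction implicitly for the convexity of the occupation-measure set $\eH$. Second, your primary route to upper hemicontinuity is to show directly that $\eta_{v_n}\otimes v_n\to\eta_v\otimes v$ and that the optimal values $\varrho_{\mu_n}\to\varrho_\mu$ via ergodic occupation measures and (A5)(1)/(A4) uniform integrability; the paper instead passes to the limit in the HJB equation \eqref{06} itself, using the equicontinuity and uniform $\sorder(\calV)$ bounds on the value functions $V_{\mu_n}$ furnished by the split-chain analysis (Lemma~\ref{L6.1}, Remark~\ref{R-equi}) together with the uniqueness part of Theorem~\ref{Thm-1}. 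You mention this HJB route as an alternative, and indeed either works here; your occupation-measure argument is a bit lighter because it avoids the value-function compactness, while the paper's route has the advantage of yielding convergence of the value functions as a by-product, which is what is actually needed later in Theorem~\ref{Thm-4}.
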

Proof of Theorem~\ref{Thm-2} is similar to the proof of Theorem~\ref{Thm-3}. Therefore we defer the proof
until Section~\ref{S-Nash}. 

\begin{remark}[Uniqueness]\label{R-uni}
In this article we are interested in existence of MFG solution and its relation with the equilibrium of $N$-person games. However, one can impose
$L^2$ type monotonicity condition (similar to \cite{lasry-lions}) on $r$ which would give a unique MFG solution in the sense of Definition~\ref{defi-1}. 
For $v\in\Usm$, let us denote $\eta\cast v (dx, du)= v(du|x)\eta_v(dx)$. Now if we impose the following monotonicity assumption on $r$,
$$\int_{\calX\times\Act} \Big(r(x, u, \eta)-r(x, u, \bar{\eta})\Big)\Big(\eta\cast v(dx, du)-\bar{\eta}\cast \bar{v}(dx, du)\Big)\leq 0, 
\quad \text{implies}\;\; \eta=\bar{\eta},$$
then one can easily establish the uniqueness of MFG solution (see for example, \cite{cardaliaguet, lasry-lions}). In fact, if 
the cost is of the form $r(x, u, \mu)\equiv r_1(x, u) + r_2(x, \mu)$, then the above monotonicity condition is similar to the one appeared in 
\cite{cardaliaguet, lasry-lions}.
\end{remark}

\section{\bf N-person Games and Nash Equilibria}\label{S-Nash}
In this section we describe the $N$-person game and show that under above hypothesis there exists an equilibrium for the $N$-person game.
These equilibriums are the central object to the mean field games. It is generally expected that the MFG solution could be used to approximate the
$N$-person game equilibriums. In this article we show that as $N\to\infty$, the equilibrium solutions for the $N$-person game tend to the solutions of MFG.
$N$-person game is described as follows: there are $N$-independent players, each of them is choosing his/her controls from
$\Uadm$. The choice of control is made independent from the other players. Therefore the players does not 
have access to the \emph{full information}.
 The players interact with each other through their running cost.
To introduce the cost we define for $\mathbf{x}=(x_1, \ldots, x_N)$,
$$\breve{\mathbf{x}}_i = (x_1, \ldots, x_{i-1}, x_{i+1}, \ldots, x_N).$$
The running cost for the $i$-th player is given by
$$r^N_i(x_i, u, \breve{\mathbf{x}}_i)= r(x_i, u, \frac{1}{N-1}\sum_{j\neq i}\del_{x_j}).$$
Hence the cost of every player is influenced by the empirical distribution of other players. Therefore interaction
with the other players is \emph{weak}.
Denote
\begin{align*}
\eG^N = \eG\times\cdots\times\eG.
\end{align*}
For $\boldsymbol{\uppi} =(\uppi_1, \ldots, \uppi_N)\in\eG^N$, we define
\begin{equation}\label{9.5}
\Hat{r}^N_i (x, u, \bm{\uppi})\; \df \; \int_{\calX\times\cdots\times\calX} r(x, u, \frac{1}{N-1}\sum_{j\neq i}\del_{y_j})\, \Pi_{j\neq i} \uppi_{j}(dy_j).
\end{equation}
By \eqref{007}, $(x, u)\mapsto \Hat{r}^N_i(x, u) $ is continuous. 
For $\boldsymbol{\uppi}\in\eG^N$ and $v\in\Uadm$, we define the cost of the $i$-th player as
\begin{equation}\label{9.55}
J^N_i(x, v, \bm{\uppi})\; \df\; \limsup_{n\to\infty}\, \frac{1}{n}\, \Exp_x^v\Big[ \sum_{j=0}^{n-1}\Hat{r}^N_i(X_j, v_j, \bm{\uppi})\Big].
\end{equation}
\begin{definition}\label{defi-2}
$\bm{\uppi}\in\eG^N$ is said be a Nash equilibrium for the $N$-person game if for every $i\in{1, \ldots, N}$ we have
$v(i)\in\Usm$ satisfying the following
\begin{equation}\label{10}
J^N_i(x, v, \bm{\uppi})\; \geq\; J^N_i(x, v(i), \bm{\uppi}), \quad \text{for all}\, x\in\calX, \; v\in\Uadm,
\end{equation}
and if $\bm{\uppi} = (\uppi_1, \ldots, \uppi_N)$ then
\begin{equation}\label{11}
\uppi_i = \eta_{v(i)}, \quad i.e., \; \int_{\calX} f(y)\, \uppi_i(dy)\; =\; \int_{\calX} \Exp_x^{v(i)}[f(X_1)]\, \uppi_i(dy),
\end{equation}
for all bounded measurable function $f$.
\end{definition}
By the above definition, existence of Nash equilibrium would imply existence of an \emph{equilibrium environment} 
for every player and no player has anything to gain by changing only their own equilibrium strategy. The optimality 
criterion of the equilibrium is understood through \eqref{10} and \eqref{11} shows that the equilibrium strategy 
leads to the invariant distribution.

\begin{remark}
The definition of Nash equilibrium in Definition~\ref{defi-2} weaker than the one used in \cite{anup-ari, feleqi}.
However one can impose mild additional conditions to make them equivalent.
A Nash equilibrium is generally defined to be a collection of (stationary Markov) strategies $(v(1), \ldots, v(N))$ such that
for all $i\in\{1, \ldots, N\}$ one has
\begin{align*}
&\limsup_{n\to\infty}\, \frac{1}{n}\, 
\Exp\Big[ \sum_{j=0}^{n-1} r^N_i\Big(X^{(i)}_j(v), v_j, \frac{1}{N-1}\sum_{k\neq i}\del_{X^{(k)}_j(v(k))}\Big)\Big]
\\ 
&\geq\; 
\limsup_{n\to\infty}\, \frac{1}{n}\, 
\Exp\Big[ \sum_{j=0}^{n-1} r^N_i\Big(X^{(i)}_j(v(i)), v_j(i), \frac{1}{N-1}\sum_{k\neq i}\del_{X^{(k)}_j(v(k))}\Big)\Big].
\end{align*}
Here $\{X^{(k)}\}$ denotes the controlled process for the $k$-th player. 
Since the players are independent, it is not hard to show that for some $\tilde{\bm{\uppi}}$ and $\tilde{v}\in\Usm$
\begin{align*}
\limsup_{n\to\infty}\, \frac{1}{n}\, 
\Exp\Big[ \sum_{j=0}^{n-1}\Hat{r}^N_i\Big(X^{(i)}_j(v), v_j, \frac{1}{N-1}\sum_{k\neq i}\del_{X^{(k)}_j(v(k))}\Big)\Big]
\geq \int_{\calX} \hat{r}^N_i(y, \tilde{v}(y), \tilde{\bm{\uppi}})\, \tilde\uppi_i(dy),
\end{align*}
and $\tilde\uppi_i$ is the invariant measure corresponding to the control $\tilde{v}$. Using (A1) and (A5) it is also easy to show that 
$$\int_{\calX} \hat{r}^N_i(y, \tilde{v}(y), \tilde{\bm{\uppi}})\, \tilde\uppi_i(dy)= J_i^N(x, \tilde{v}, \tilde{\bm{\uppi}}), 
\quad \text{for all}\; x\in\calX.$$
This also shows that it is enough to minimize over the controls in $\Usm$.
One the other hand, if the equilibrium strategies $v(i)$ is in $\Usm$ for all $i$, then the joint Markov process
$\Phi\df\{X^{(1)}(v(1)), \ldots, X^{(N)}(v(N))\}$ has a unique invariant measure.
To show the existence of unique
invariant measure for $\Phi$ we observe from \eqref{01} that there is 
a stability inequality for this process and $C\times\cdots\times C$ is a small set with respect
to the measure $\nu\times\cdots\times\nu$ by \eqref{02}. Since product of invariant measures of $\{X^{(i)}(v(i)\}$ is an invariant measure we obtain that
the Markov chain $\Phi$ recurrent \cite[Proposition~10.1.1]{meyn-tweedie}. Hence uniqueness
of invariant measure follows from \cite[Theorem~10.0.1]{meyn-tweedie}.

Now if we assume that $\text{supp}(\nu)$ has non-empty interior, then $\Phi$ being  irreducible and aperiodic
(with respect to $\nu\times\cdots\times\nu$) Markov chain we see that every compact
subset of $(\calX)^N$ is a petit set by \cite[Proposition~6.2.8]{meyn-tweedie}.
Therefore using (A1) and \cite[Theorem~14.0.1]{meyn-tweedie} we get
$$
\limsup_{n\to\infty}\, \frac{1}{n}\, 
\Exp\Big[ \sum_{j=0}^{n-1} r^N_i\Big(X^{(i)}_j(v(i)), v_j(i), \frac{1}{N-1}\sum_{k\neq i}\del_{X^{(k)}_j(v(k))}\Big)\Big]
=J_i^N(x, v(i), \bm{\uppi})$$
where $\bm\uppi$ satisfies \eqref{11}. \end{remark}

Our main result of this section is as follows
\begin{theorem}\label{Thm-3}
Assume that (A1)--(A6) hold. Then there exists a Nash equilibrium in the sense of Definition~\ref{defi-2} for the $N$-person game.
\end{theorem}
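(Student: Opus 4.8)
The plan is to establish existence of a Nash equilibrium for the $N$-person game via the Kakutani--Fan--Glicksberg fixed point theorem, exactly mirroring the structure that will be used for Theorem~\ref{Thm-2}. Fix $N$. For a vector of environments $\bm{\uppi}=(\uppi_1,\dots,\uppi_N)\in\eG^N$, the $i$-th player faces an ergodic control problem with running cost $\Hat{r}^N_i(\cdot,\cdot,\bm{\uppi})$, which by \eqref{007} is continuous and, using (A1) and (A5)(1), lies in $\sorder(\calV)$ uniformly. Hence Theorem~\ref{Thm-1} applies to give a unique solution $(V^N_{i,\bm{\uppi}},\varrho^N_{i,\bm{\uppi}})$ of the associated ergodic HJB equation, with a nonempty set of minimizing selectors and corresponding invariant measures. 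Define the set-valued map
\[
\calA^N(\bm{\uppi})\;\df\;\Pi_{i=1}^N\,\calA^N_i(\bm{\uppi}),\qquad
\calA^N_i(\bm{\uppi})\;\df\;\{\eta_{v}\,:\,v\in\Usm\ \text{is a minimizing selector for player }i\text{ given }\bm{\uppi}\},
\]
so that a fixed point of $\calA^N$ is precisely a Nash equilibrium in the sense of Definition~\ref{defi-2} (optimality \eqref{10} follows from the ``only if'' direction of Theorem~\ref{Thm-1}, and \eqref{11} is the invariance property).

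First I would fix the domain: by \eqref{0303} the set $\eG$ is contained in $\{\eta:\eta(\calV)\le\kappa\}$, which by inf-compactness of $\calV$ is tight, hence relatively compact in $\calP(\calX)$; moreover (A4) upgrades this to relative compactness in $(\calP_p(\calX),\eD_p)$, and one checks $\eG$ is closed (using (A2), (A6)), so $\eG$ is a compact convex-hull candidate. Convexity of $\eG$ is where the relaxed controls enter: given $\eta_{v_1},\eta_{v_2}\in\eG$ and $\lambda\in[0,1]$, the measure $\lambda\eta_{v_1}+(1-\lambda)\eta_{v_2}$ is the invariant measure of the relaxed stationary control obtained by mixing $v_1,v_2$ with weights proportional to $\lambda\,\frac{d\eta_{v_1}}{d(\lambda\eta_{v_1}+(1-\lambda)\eta_{v_2})}$ and the complementary density --- this is the standard convexity argument for invariant measures of controlled Markov chains. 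Thus $\eG^N$ is a compact convex subset of the locally convex space $\calM(\calX)^N$ with the weak topology. I would also verify that $\calA^N(\bm{\uppi})$ is nonempty (measurable selectors exist by \cite[Proposition~7.33]{bertsekas-shreve} as noted after Theorem~\ref{Thm-1}), and convex and closed for each $\bm{\uppi}$, again via the relaxed-control mixing construction applied to the (convex, by linearity of \eqref{06.5} in $v$) set of minimizing selectors and then pushing forward through $v\mapsto\eta_v$.

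The substantive step is upper hemicontinuity of $\bm{\uppi}\mapsto\calA^N(\bm{\uppi})$: if $\bm{\uppi}^k\to\bm{\uppi}$ in $\eG^N$, $\eta^k\in\calA^N(\bm{\uppi}^k)$ with witnesses $v^k(i)$, and $\eta^k\to\eta$, then $\eta\in\calA^N(\bm{\uppi})$. Since $\Usm$ is compact metric by (A2), pass to a subsequence with $v^k(i)\to v(i)$ in $\D_M$ for each $i$. Three convergences must be assembled: (a) $\Hat{r}^N_i(\cdot,\cdot,\bm{\uppi}^k)\to\Hat{r}^N_i(\cdot,\cdot,\bm{\uppi})$ uniformly on compacts with a uniform $\sorder(\calV)$ tail bound --- this is exactly what Lemma~\ref{L5.1} and the continuity statement \eqref{007} are designed to give, combined with (A5)(3) to control the dependence on $\bm{\uppi}^k$; (b) equicontinuity and local uniform convergence of the value functions $V^N_{i,\bm{\uppi}^k}\to V^N_{i,\bm{\uppi}}$, which is the crux and relies on the split-chain / minorization estimate announced as Lemma~\ref{L6.1} (the discrete-time analogue of the Harnack-based estimate of \cite{anup-ari}), together with the $\sorder(\calV)$ bounds; (c) stability of invariant measures, i.e. $\eta_{v^k(i)}\to\eta_{v(i)}$ in total variation by (A6). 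With these in hand, one passes to the limit in the HJB identity \eqref{06.5} written for $(V^N_{i,\bm{\uppi}^k},v^k(i))$ --- using (A2) for continuity of $v\mapsto\int f(X_1,v(X_1))$-type integrals and dominated convergence justified by the $\calV$-moment bound \eqref{0303} --- to conclude that $v(i)$ is a minimizing selector for player $i$ given $\bm{\uppi}$, and \eqref{09}/\eqref{11} passes to the limit by (c), so $\eta=\Pi_i\eta_{v(i)}\in\calA^N(\bm{\uppi})$, with $\eta$ having closed graph.

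Finally, the Kakutani--Fan--Glicksberg theorem applied to $\calA^N:\eG^N\rightrightarrows\eG^N$ (nonempty, convex, closed values; closed graph; compact convex domain) yields a fixed point $\bm{\uppi}^*\in\calA^N(\bm{\uppi}^*)$, which unwinds to the desired Nash equilibrium: each component $\uppi^*_i=\eta_{v(i)}$ for a minimizing selector $v(i)$ of player $i$'s ergodic HJB given $\bm{\uppi}^*$, giving \eqref{10} via the optimality characterization in Theorem~\ref{Thm-1} and \eqref{11} by invariance. I expect the main obstacle to be step (b): proving the value-function equicontinuity and its stability under perturbation of the running cost, which requires the split-chain construction and careful use of the minorization condition \eqref{02} to get estimates uniform over the relevant family of controls and environments; once Lemma~\ref{L6.1} is available, the remainder is a routine (if lengthy) limiting argument.
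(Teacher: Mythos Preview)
Your proposal is correct and follows essentially the same route as the paper: establish that $\eG$ (hence $\eG^N$) is compact and convex via the relaxed-control mixing construction (Lemma~\ref{L4.1}), show that the best-response correspondence $\bm{\calA}$ has nonempty compact convex values (Lemma~\ref{L4.2}) and is upper-hemicontinuous (Lemma~\ref{L4.3}) using the split-chain equicontinuity estimate of Lemma~\ref{L6.1} together with (A6), and then apply Kakutani--Fan--Glicksberg. The only minor discrepancy is that for the convergence $\Hat r^N_i(\cdot,\cdot,\bm{\uppi}^k)\to\Hat r^N_i(\cdot,\cdot,\bm{\uppi})$ the paper invokes (A6) and Lemma~\ref{L5.1} rather than (A5)(3) directly, but since Lemma~\ref{L5.1} already encapsulates the relevant part of (A5)(3) this is not a substantive difference.
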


The proof technique of Theorem~\ref{Thm-3} uses fixed point argument. Use of fixed point argument is quite standard in mean field game theory and
it has been used in \emph{almost all} the existing works to obtain the existence of Nash equilibrium or MFG solutions.
Our arguments are similar to the one used in \cite{anup-ari}.

We recall the set $\eG$ of invariant measures.
$$\eG\df \{\eta\in\calP(\calX)\; :\; \eta=\eta_v\; \text{for some}\; v\in\Usm\}.$$
Our next result shows that $\eG$ is compact and convex.
\begin{lemma}\label{L4.1}
The set $\eG$ defined above is a convex set. It is also compact in the topology of weak convergence.
\end{lemma}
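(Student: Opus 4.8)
The plan is to prove convexity and compactness separately, using the randomized (relaxed) controls to get convexity and using the stability estimate (A1) together with (A6) to get compactness.

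For \textbf{convexity}: given $\eta_0=\eta_{v_0}$ and $\eta_1=\eta_{v_1}$ in $\eG$ and $\lambda\in(0,1)$, I want to exhibit a stationary (randomized) Markov control $w\in\Usm$ whose invariant measure is $\lambda\eta_0+(1-\lambda)\eta_1$. The natural candidate is the control that, at a point $x$, uses the mixture of $v_0(x)$ and $v_1(x)$ weighted by the relative masses that $\eta_0$ and $\eta_1$ place near $x$; concretely, let $\eta_\lambda=\lambda\eta_0+(1-\lambda)\eta_1$, let $h_0=\tfrac{\D(\lambda\eta_0)}{\D\eta_\lambda}$ and $h_1=\tfrac{\D((1-\lambda)\eta_1)}{\D\eta_\lambda}$ be the Radon--Nikodym derivatives (well-defined $\eta_\lambda$-a.e., with $h_0+h_1=1$), and set
$$
w(\D u\mid x)\;\df\; h_0(x)\,v_0(\D u\mid x)+h_1(x)\,v_1(\D u\mid x).
$$
This $w$ is a randomized stationary Markov control, hence $w\in\Usm$. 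I would then verify that $\eta_\lambda$ is invariant for the kernel $P(\D y\mid x,w(x))$: for bounded measurable $f$,
$$
\int_\calX\!\!\int_{\calX\times\Act} f(y)\,P(\D y\mid x,u)\,w(\D u\mid x)\,\eta_\lambda(\D x)
=\lambda\!\int_\calX\!\!\int f(y)P(\D y\mid x,v_0(x))\eta_0(\D x)+(1-\lambda)\!\int\!\!\int f(y)P(\D y\mid x,v_1(x))\eta_1(\D x),
$$
where the split is exactly $h_0\,\eta_\lambda=\lambda\eta_0$ and $h_1\,\eta_\lambda=(1-\lambda)\eta_1$, and each term equals $\lambda\eta_0(f)$ and $(1-\lambda)\eta_1(f)$ by invariance of $\eta_0,\eta_1$; the sum is $\eta_\lambda(f)$. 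Since (A3) guarantees a \emph{unique} invariant probability measure for each stationary control, $\eta_w=\eta_\lambda$, so $\eta_\lambda\in\eG$.

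For \textbf{compactness}: by \eqref{0303} we have $\sup_{\eta\in\eG}\int_\calX\calV\,\D\eta<\infty$, and since $\calV$ is inf-compact this makes $\eG$ tight, hence relatively compact in $\calP(\calX)$ with the weak topology by Prokhorov's theorem. To close $\eG$, take $\eta_n=\eta_{v_n}\in\eG$ with $\eta_n\Rightarrow\eta$. By (A2), $(\Usm,\D_M)$ is compact, so along a subsequence $v_n\to v$ in $\D_M$ for some $v\in\Usm$; by (A6), $\norm{\eta_{v_n}-\eta_v}_{TV}\to 0$, so in particular $\eta_{v_n}\Rightarrow\eta_v$, forcing $\eta=\eta_v\in\eG$. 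Thus $\eG$ is closed and relatively compact, hence compact.

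The \textbf{main obstacle} is the convexity argument: one must check that $w$ is a genuine randomized stationary Markov control (measurability of $h_0,h_1$ in $x$, which follows from standard measurable-disintegration/Radon--Nikodym arguments since $\calX$ is Polish) and, more importantly, that the invariance computation is legitimate — i.e.\ that one may disintegrate $\eta_\lambda$ and recombine without integrability issues. Here (A1)/\eqref{0303} ensure all the relevant integrals are finite, and the uniqueness clause in (A3) is what lets us identify the constructed invariant measure with $\eta_\lambda$ rather than merely asserting \emph{some} invariant measure equals it. Everything else (tightness, Prokhorov, the subsequence extraction) is routine given the hypotheses already in force.
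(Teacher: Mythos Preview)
Your proof is correct and follows essentially the same approach as the paper: the convexity argument constructs the mixed randomized control via Radon--Nikodym derivatives of $\lambda\eta_0$ and $(1-\lambda)\eta_1$ with respect to $\eta_\lambda$, verifies invariance directly, and invokes uniqueness from (A3); compactness is obtained from tightness via \eqref{0303}, compactness of $(\Usm,\D_M)$ from (A2), and closure via (A6). Your write-up is in fact slightly more careful than the paper's in flagging the measurability and integrability checks.
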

\begin{proof}
From \eqref{0303} we see that $\eG$ is tight in $\calP(\calX)$. Since $(\Usm, \D_M)$ is compact, using (A2)
and (A6) we have
$\eG$ compact.
To show the convexity we consider $\eta_{v^i}\in\eG$ for $i=1,2$
where $v^i\in\Usm$ for $i=1, 2$. Let $\bar\eta = \theta \eta_{v^1} + (1-\theta) \eta_{v^2}$ where $\theta\in[0,1]$. We want to show that $\bar\eta=\eta_v$ for some $v\in\Usm$.
Define
$$v(du|x)\df \theta\frac{d\eta_{v^1}}{d\bar\eta}(x) v^1(du|x) + (1-\theta)\frac{d\eta_{v^2}}{d\bar\eta}(x) v^2(du|x), \quad x\in\calX.$$
Here $\frac{d\eta_{v^i}}{d\bar\eta}(x)$ denotes the Radon-Nykodim derivative. The above definition makes sense as without loss of generality we can choose
$$\theta\frac{d\eta_{v^1}}{d\bar\eta}(x) + (1-\theta)\frac{d\eta_{v^2}}{d\bar\eta}(x)=1, \quad \text{for all}\; x\in\calX.$$
To complete the proof we need to show that $\bar\eta=\eta_v$ where $v$ is given above. To show this we consider $f\in\calC_b(\calX)$. Then
\begin{align*}
\int_{\calX} \Exp^v_y[f(X_1)] \, d\bar\eta & = \int_\calX\Big[\theta \frac{d\eta_{v^1}}{d\bar\eta}(y)\int_{\calX\times\Act} f(z) P(dz|y, u) v^1(du|y)
\\
&\, \qquad +(1-\theta) \frac{d\eta_{v^2}}{d\bar\eta}(y)\int_{\calX\times\Act} f(z) P(dz|y, u) v^2(du|y)\Big]\bar\eta(dy)
\\
&= \int_\calX \theta \int_{\calX\times\Act} f(z) P(dz|y, u) v^1(du|y)\, \eta_{v^1}(dy)
\\
&\, \qquad +(1-\theta) \frac{d\eta_{v^2}}{d\bar\eta}(y)\int_{\calX\times\Act} f(z) P(dz|y, u) v^2(du|y)\, \eta_{v^2}(dy)
\\
&=\int_{\calX} f(y)\, \bar\eta(dy).
\end{align*}
By uniqueness of invariant measure we obtain $\bar\eta=\eta_{v}$. Hence the proof.
\end{proof}

Any element of $\eG^N$ will be denoted in boldface fonts/symbols. For exmaple, $\bm{\uppi}$ is an element in $\eG^N$ whereas 
$\uppi$ would denote an element of $\eG$. For any $\bm{\uppi}$, recall $\Hat{r}^N_i$ from $\eqref{9.5}$. 
Using \eqref{007} and Theorem~\ref{Thm-1}
we have for each $i\in\{1, \ldots, N\}$, a unique pair $(V^N_{i,\bm{\uppi}}, \varrho^N_{i, \bm\uppi})\in\calC(\calX)\cap\sorder(\calV)\times \R$, 
$V^N_{i,\bm{\uppi}}(0)=0$, satisfing
\begin{equation}\label{12}
V^N_{i,\bm{\uppi}}(x) + \varrho^N_{i, \bm\uppi} = \min_{u\in\Act}\{\Hat{r}^N_i(x, u, \bm{\uppi}) + \int_{\calX} V^N_{i,\bm{\uppi}}(y) P(dy|x, u) \},
\end{equation}
where $\varrho^N_{i, \bm\uppi}$ is the optimal value of $J^N_i$. Also note that all the stationary Markov controls are stable due to (A1). By a 
minimizing selector of \eqref{12} we mean control $v\in\Usm$ satisfying
\begin{equation}\label{13}
V^N_{i,\bm{\uppi}}(x) + \varrho^N_{i, \bm\uppi} = \int_{\Act}\Hat{r}^N_i(x, u, \bm{\uppi})v(du|x) + \int_{\calX\times\Act} V^N_{i,\bm{\uppi}}(y) P(dy|x, u)v(du|x),
\quad \text{almost surely }\; \eta_v\,. 
\end{equation}
We note that any deterministic measurable selector of \eqref{12} satisfies \eqref{13}. Thus the set of minimizing selector in the sense of \eqref{13} is non-empty.
Given $\bm{\uppi}\in\eG^N$, we define $\bm{\calA}(\bm{\uppi})=\calA_1\times\cdots\times\calA_N\subset\eG^N$ as follows
$$\calA_i(\bm{\uppi})\;\df\; \{\eta\in\eG \; :\; \eta=\eta_{v^i}\; \text{where}\; v^i \; \text{satisfies}\ \eqref{13}\}.$$
It is easy to see that existence of fixed point of the map $\bm{\uppi}\mapsto \bm{\calA}(\bm{\uppi})$ would establish Theorem~\ref{Thm-3}. 
We also observe that the definition of $\bm{\calA}$ is similar to the map $\calA$ defined in \eqref{07}.  The next result shows that
$ \bm{\calA}(\bm{\uppi})$ is compact and convex for every $\bm{\uppi}$.

\begin{lemma}\label{L4.2}
The map $\bm{\uppi}\ni\eG^N\mapsto \bm{\calA}(\bm{\uppi})\, (\neq \emptyset)$
 is compact (with respect to weak topology) and convex set valued.
\end{lemma}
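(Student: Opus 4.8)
The plan is to reduce the assertion to its single-component form and prove nonemptiness, convexity, and closedness separately. Since $\bm{\calA}(\bm{\uppi}) = \calA_1(\bm{\uppi})\times\cdots\times\calA_N(\bm{\uppi})$ is a \emph{finite} product, it is nonempty (resp.\ convex, resp.\ compact) as soon as each factor is. Fix $i$ and abbreviate $V = V^N_{i,\bm{\uppi}}$, $\varrho = \varrho^N_{i,\bm{\uppi}}$, $\Hat r = \Hat r^N_i(\cdot,\cdot,\bm{\uppi})$, and
$H(x,w)\df \int_{\Act}\bigl(\Hat r(x,u) + \int_{\calX} V(y)\,P(\D y|x,u)\bigr)\,w(\D u)$ for $w\in\calP(\Act)$; note that $H$ is affine in its second argument, that by \eqref{12} one has $H(x,w)\geq V(x)+\varrho$ for \emph{every} $x$ and $w$, and that \eqref{13} is precisely the statement that $H(x,v(\D u|x)) = V(x)+\varrho$ for $\eta_v$-a.e.\ $x$. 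Nonemptiness of $\calA_i(\bm{\uppi})$ is already covered by the discussion preceding the lemma: \eqref{12} admits a deterministic measurable selector, which satisfies \eqref{13}.

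For convexity I would mimic the proof of Lemma~\ref{L4.1}. Given $\eta_{v^1},\eta_{v^2}\in\calA_i(\bm{\uppi})$ and $\theta\in(0,1)$, put $\bar\eta = \theta\eta_{v^1}+(1-\theta)\eta_{v^2}$ and define the mixed control $v$ exactly as in Lemma~\ref{L4.1}, so that $\eta_v = \bar\eta$ by that lemma. It then suffices to check that $v$ is a minimizing selector, i.e.\ $H(x,v(\D u|x)) = V(x)+\varrho$ for $\bar\eta$-a.e.\ $x$. This follows from the affineness of $H$ together with the observation that $\D\eta_{v^j}/\D\bar\eta$ vanishes $\bar\eta$-a.e.\ on the $\eta_{v^j}$-null set where $v^j$ violates \eqref{13}; hence on the set where the $j$-th mixing weight is positive we may substitute $V(x)+\varrho$ for $H(x,v^j(\D u|x))$, and since the two weights sum to $1$ the claimed equality for $v$ follows.

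For compactness, since $\calA_i(\bm{\uppi})\subset\eG$ and $\eG$ is compact by Lemma~\ref{L4.1}, it is enough to show $\calA_i(\bm{\uppi})$ is closed. Take $\eta_{v_n}\in\calA_i(\bm{\uppi})$ with $\eta_{v_n}\to\eta$ weakly; by compactness of $(\Usm,\D_M)$ pass to a subsequence along which $v_n\to v$, and by (A6) $\eta_{v_n}\to\eta_v$ in total variation, so $\eta = \eta_v$. It remains to prove $v$ satisfies \eqref{13}. Integrating \eqref{13} for $v_n$ against $\eta_{v_n}$ and using the invariance of $\eta_{v_n}$ applied to $V$ (legitimate because $V\in\sorder(\calV)$ and $\sup_{\mu\in\eG}\int\calV\,\D\mu<\kappa$ by \eqref{0303}) gives $\int_{\calX\times\Act}\Hat r\,\D(\eta_{v_n}\cast v_n) = \varrho$ for all $n$. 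Next I would show $\eta_{v_n}\cast v_n \to \eta_v\cast v$ weakly on $\calX\times\Act$: for $f\in\calC_b(\calX\times\Act)$, invariance rewrites $\int f\,\D(\eta_{v_n}\cast v_n)$ as $\int_{\calX}\Exp_y^{v_n}[f(X_1,v_n(X_1))]\,\eta_{v_n}(\D y)$, whose integrand is bounded by $\norm{f}_\infty$ and converges pointwise to $\Exp_y^{v}[f(X_1,v(X_1))]$ by (A2); splitting off $\int f\,\D(\eta_{v_n}-\eta_v)$ and controlling it by $\norm{f}_\infty\norm{\eta_{v_n}-\eta_v}_{TV}\to0$ (here (A6) is essential, since the integrand itself moves), dominated convergence gives the claim. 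Because $\Hat r\geq0$ is continuous (by \eqref{007}), the lower-semicontinuity half of the Portmanteau theorem yields $\int\Hat r\,\D(\eta_v\cast v)\leq\liminf_n\int\Hat r\,\D(\eta_{v_n}\cast v_n)=\varrho$; on the other hand, integrating the pointwise bound $H(x,v(\D u|x))\geq V(x)+\varrho$ against $\eta_v$ and again using invariance gives $\int\Hat r\,\D(\eta_v\cast v)\geq\varrho$. Hence $\int[H(x,v(\D u|x))-V(x)-\varrho]\,\eta_v(\D x)=0$ with a nonnegative integrand, forcing equality $\eta_v$-a.s., i.e.\ \eqref{13}. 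Thus $\eta=\eta_v\in\calA_i(\bm{\uppi})$, and $\calA_i(\bm{\uppi})$ is closed.

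The main obstacle is the last step: showing that the $\D_M$-limit $v$ is again a minimizing selector. One cannot pass to the limit in \eqref{13} pointwise, because $\D_M$-convergence of $v_n$ does not give $\eta_v$-a.e.\ convergence of $v_n(\D u|x)$; the workaround is to pass to the limit only in the identity \emph{averaged} against $\eta_{v_n}$, which needs the joint weak convergence $\eta_{v_n}\cast v_n\to\eta_v\cast v$ (this is exactly where (A2) and (A6) are used), and then to recover the pointwise relation \eqref{13} from the equality of averages via the one-sided bound coming from \eqref{12}. A secondary technical nuisance is that $\Hat r$ and $V$ may be unbounded, so one must lean on the uniform moment estimate \eqref{0303} for the invariance computations and use only the lower-semicontinuity direction of Portmanteau.
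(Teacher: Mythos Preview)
Your proof is correct and follows the same blueprint as the paper: for convexity, show that the mixed control from Lemma~\ref{L4.1} is again a minimizing selector; for compactness, reduce to closedness in $\eG$, pass to a $\D_M$-convergent subsequence of controls, and establish the joint weak convergence $\eta_{v_n}\cast v_n\to\eta_v\cast v$ exactly as in the paper's display \eqref{14} via (A2), (A6) and the invariance rewriting. The only minor variation is in how you extract $\int\Hat r\,\D(\eta_v\cast v)=\varrho$ for the limit: the paper uses $\Hat r\in\sorder(\calV)$ together with the uniform moment bound \eqref{0303} to upgrade \eqref{14} and pass to the limit directly, while you obtain $\leq$ via Portmanteau and $\geq$ by integrating the HJB inequality against $\eta_v$; both then conclude \eqref{13} for $v$ (the paper by invoking Theorem~\ref{Thm-1}, you by the nonnegative-integrand argument).
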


\begin{proof}
Form Theorem~\ref{Thm-1} we have
$$\varrho^N_{i, \bm{\uppi}} = \int_{\calX\times\Act} \Hat{r}^N_i(y, u, \bm{\uppi})\tilde{v}(du|y)\, \eta_{\tilde{v}}(dy),$$
where $\tilde{v}$ is a minimizing selector in the sense of \eqref{13}. Again for $\eta_{v^i}$ in $\calA_i(\bm{\uppi})$ and
$\theta\in[0,1]$, we can have $v$ satisifying
(see the construction in Lemma~\ref{L4.1})
$$\eta_v(dy) v(du|y) = \theta\, \eta_{v^1}(dy) v^1(du|y) + (1-\theta)\,\eta_{v^2}(dy) v^2(du|y),
\quad \text{and}\quad \eta_{v} = \theta\eta_{v^1} + (1-\theta) \eta_{v^2}.$$
Therefore we have 
\begin{equation}\label{13.5}
\varrho^N_{i, \bm{\uppi}} = \int_{\calX\times\Act} \Hat{r}^N_i(y, u, \bm{\uppi})v(du|y)\, \eta_v(dy),
\end{equation}
and thus $v$ is an optimal control. Again using Theorem~\ref{Thm-1} we see that $v$ is a minimizing selector in the sense of \eqref{13}. Thus
$\eta_v\in\calA_i(\bm{\uppi})$. This shows that $\calA_i(\bm{\uppi})$ is convex and hence $\bm{\calA}(\bm{\uppi})$ is convex.
To establish compactness it is enough to show that each $\calA_i(\bm{\uppi})$ is compact. 
$(\Usm, \D_M)$ and $\eG$ being compact we only need to show that if $v_n\to v$ then 
\begin{equation}\label{14}
\int_{\calX\times\Act} f(y, u) v^n(du|y)\eta_{v^n}(dy)\to \int_{\calX\times\Act} f(y, u) v(du|y)\eta_{v}(dy),
\end{equation}
for all $f\in\calC_b(\calX\times\Act)$. Since by (A5)(2) we have $\max_{u\in\Act}\Hat{r}^N_i(\cdot, u, \bm{\uppi})
\in\sorder(\calV)$, using \eqref{14} we get \eqref{13.5} which implies that $\eta_v\in\calA_i(\bm{\uppi})$.
 To show \eqref{14} we use (A2) and (A6). 
\begin{align*}
&\Big|\int_{\calX\times\Act} f(y, u) v^n(du|y)\eta_{v^n}(dy)-\int_{\calX\times\Act} f(y, u) v(du|y)\eta_{v}(dy)\Big|
\\
&= \Big|\int_{\calX} \Exp^{v^n}_y[f(X_1, v^n(X_1)] \eta_{v^n}(dy)-\int_{\calX} \Exp^{v}_y[f(X_1, v(X_1)] \eta_{v}(dy)\Big|
\\
&\leq \norm{f}_\infty \norm{\eta_{v^n}-\eta_{v}}_{TV} + 
\Big|\int_{\calX} \Exp^{v^n}_y[f(X_1, v^n(X_1)] \eta_{v}(dy)-\int_{\calX} \Exp^{v}_y[f(X_1, v(X_1)] \eta_{v}(dy)\Big|
\\
&\to 0,
\end{align*}
as $n\to\infty$, where in the third line we use (A6) and (A2) with dominated convergence theorem.
This shows that $\calA_i(\bm{\uppi})$ is compact. Hence the proof.
\end{proof}

\begin{definition}
The map $\bm{\uppi}\mapsto \bm{\calA}(\bm{\uppi})$ is said to be \emph{upper-hemicontinuous} if for $\bm{\uppi}_n\to\bm{\uppi}$
as $n\to\infty$, and $\tilde{\bm\uppi}_n\in\bm{\calA}(\bm{\uppi})$ for all $n$, then the sequence $\{\tilde{\bm{\uppi}}_n\}$ has a limit point in
$\bm{\calA}(\bm{\uppi})$.
\end{definition}

Upper-hemicontinuity plays a key role to obtain the fixed point of $\bm{\uppi}\mapsto \bm{\calA}(\bm{\uppi})$. The next result shows that the 
map is in fact, upper-hemicontinuous.
\begin{lemma}\label{L4.3}
The map $\bm{\uppi}\mapsto \bm{\calA}(\bm{\uppi})$ is upper-hemicontinuous.
\end{lemma}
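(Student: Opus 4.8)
The plan is to fix a convergent sequence $\bm{\uppi}_n\to\bm{\uppi}$ in $\eG^N$ and $\tilde{\bm{\uppi}}_n\in\bm{\calA}(\bm{\uppi}_n)$, and to produce a limit point lying in $\bm{\calA}(\bm{\uppi})$. Working componentwise, write $\tilde{\uppi}_{n,i}=\eta_{v^n(i)}$ where $v^n(i)\in\Usm$ is a minimizing selector of \eqref{12} for the data $\bm{\uppi}_n$. Since $(\Usm,\D_M)$ is compact and $\eG$ is compact (Lemma~\ref{L4.1}), after passing to a subsequence we may assume $v^n(i)\to v(i)$ in $\D_M$ for each $i$ and $\tilde{\bm{\uppi}}_n\to\tilde{\bm{\uppi}}$ in $\eG^N$; by (A6) we in fact have $\tilde{\uppi}_{n,i}=\eta_{v^n(i)}\to\eta_{v(i)}$ in total variation, so $\tilde{\uppi}_i=\eta_{v(i)}$, which already gives \eqref{11} in the limit. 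It remains to show that $v(i)$ is a minimizing selector of \eqref{12} for $\bm{\uppi}$, equivalently that $\tilde{\uppi}_i=\eta_{v(i)}\in\calA_i(\bm{\uppi})$.

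The main tool is to pass to the limit in the ergodic HJB equation \eqref{12}. First I would establish two convergences. (i) The running costs converge: $\Hat{r}^N_i(\cdot,\cdot,\bm{\uppi}_n)\to\Hat{r}^N_i(\cdot,\cdot,\bm{\uppi})$, uniformly on compact subsets of $\calX\times\Act$. This follows from the Lipschitz estimate \eqref{04} together with Lemma~\ref{L5.1} (which lets us truncate the $\mu_j$'s to a fixed compact set, uniformly in $N$) and the weak convergence $\bm{\uppi}_n\to\bm{\uppi}$; the uniform bound $\sup_{\mu\in\eG}\int\calV\,d\mu<\kappa$ from \eqref{0303} and (A5)(1) controls the tails. (ii) The value functions $V^N_{i,\bm{\uppi}_n}$ and the costs $\varrho^N_{i,\bm{\uppi}_n}$ are precompact: this is exactly the equicontinuity statement advertised after Theorem~\ref{Thm-1} (proved via the split-chain representation in Section~\ref{Ergodic HJB}, see Lemma~\ref{L6.1}), applied to the family $\{\Hat{r}^N_i(\cdot,\cdot,\bm{\uppi}_n)\}_n$, which lies in a bounded subset of $\sorder(\calV)$ by (A5)(1) and the uniform convergence in (i). Thus along a further subsequence $V^N_{i,\bm{\uppi}_n}\to V^N_{i,\bm{\uppi}}$ uniformly on compacts and $\varrho^N_{i,\bm{\uppi}_n}\to\varrho^N_{i,\bm{\uppi}}$, and by uniqueness in Theorem~\ref{Thm-1} the limit is the genuine solution pair for $\bm{\uppi}$.

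With these in hand I would pass to the limit in \eqref{13}. The delicate term is $\int_{\calX\times\Act}V^N_{i,\bm{\uppi}_n}(y)\,P(dy|x,u)\,v^n(i)(du|x)$; here I would combine the uniform-on-compacts convergence of $V^N_{i,\bm{\uppi}_n}$, the $\sorder(\calV)$ bound with \eqref{01} to handle the tails of $P(dy|x,\cdot)$ by dominated convergence, and the continuity property in (A2) (continuity of $(x,v)\mapsto\Exp_x[f(X_1,v(X_1))]$) to deal with $v^n(i)\to v(i)$. Integrating \eqref{13} against $\eta_{v^n(i)}$ and using (A6) (TV-convergence of the invariant measures) to pass the limit through the outer integral, one concludes that $v(i)$ satisfies \eqref{13} for $\bm{\uppi}$, i.e. it is a minimizing selector, so $\eta_{v(i)}\in\calA_i(\bm{\uppi})$. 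Hence $\tilde{\bm{\uppi}}\in\bm{\calA}(\bm{\uppi})$.

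The main obstacle is step (ii): getting the equicontinuity and local uniform boundedness of the value functions $V^N_{i,\bm{\uppi}_n}$ that are \emph{uniform in $n$}. This is precisely where the minorization condition \eqref{02} and the split-chain construction enter, and it rests on controlling the family of running costs in $\sorder(\calV)$ uniformly in $n$ (via (A5)(1), \eqref{0303}, and the uniform convergence from step (i)); the bookkeeping for the tails — ensuring the $\sorder(\calV)$ moduli do not degenerate along the sequence — is the part requiring care. Everything else is a routine limiting argument using (A2), (A6), and dominated convergence.
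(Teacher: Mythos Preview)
Your proposal is correct and follows essentially the same route as the paper: extract subsequential limits of the controls via compactness of $(\Usm,\D_M)$, establish locally uniform convergence of $\Hat{r}^N_i(\cdot,\cdot,\bm{\uppi}_n)$ via Lemma~\ref{L5.1}, use the split-chain equicontinuity (Remark~\ref{R-equi}) together with a uniform-in-$n$ $\sorder(\calV)$ bound on $V^N_{i,\bm{\uppi}_n}$ to pass to the limit in the HJB, and then invoke the uniqueness part of Theorem~\ref{Thm-1}. The paper obtains the uniform $\sorder(\calV)$ bound explicitly via the stopping-time representation \eqref{17}--\eqref{18}, and concludes optimality of $v^i$ by passing to the limit directly in the integrated identity \eqref{15} rather than integrating \eqref{13} first; your phrasing ``integrating \eqref{13} and passing to the limit gives \eqref{13} for $\bm{\uppi}$'' is slightly loose (what you actually recover is the integrated form $\varrho^N_{i,\bm{\uppi}}=\int\Hat{r}^N_i\,v(i)\,\D\eta_{v(i)}$, and then Theorem~\ref{Thm-1} supplies the a.s.\ selector equation), but the logic is the same.
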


\begin{proof}
Let $\bm{\uppi}_n\to\bm{\uppi}$ as $n\to\infty$, and $\tilde{\bm\uppi}_n\in\bm{\calA}(\bm{\uppi}_n)$ for all $n$. 
Let $\tilde{\bm\uppi}_n=(\eta_{1, n}, \ldots, \eta_{N, n})$.
We fix $i\in\{1, \ldots, N\}$ and consider the
family $\{V_{i, \bm{\uppi}_n}\}$. By definition there exists $v^{i,n}\in\Usm$ such that
\begin{equation}\label{15}
\varrho^N_{i, \bm{\uppi}_n} = \int_{\calX\times\Act} \Hat{r}^N_i(y, u, \bm{\uppi}_n) v^{i,n}(du|y)\, \eta_{v^{i,n}}(dy),
\end{equation}
$\eta_{i, n}=\eta_{v^{i, n}}$, and $v^{i, n}$ is a minimizing selector of 
\begin{equation}\label{16}
V^N_{i,\bm{\uppi}_n}(x) + \varrho^N_{i, \bm{\uppi}_n} = \min_{u\in\Act}\Big\{\Hat{r}^N_i(x, u, \bm{\uppi}_n) + 
\int_{\calX} V^N_{i,\bm{\uppi}_n}(y) P(dy|x, u) \Big\}.
\end{equation}
$\eG^N$ being compact, we may assume that $\bm{\uppi}_n\to\bm{\uppi}\in\eG^N$ as $n\to\infty$. In view of (A6)
and Lemma~\ref{L5.1}, we have 
$$\Hat{r}^N_i(x, u, \bm{\uppi}_n)\to \Hat{r}^N_i(x, u, \bm{\uppi}), \quad \text{as}\; n\to\infty,$$
uniformly over compact subsets of $\calX\times\Act$.
Otherwise, we can consider a converging sub-sequence. Therefore by Remark~\ref{R-equi} we note that the family $\{V^N_{i, \bm{\uppi}_n}\}$ is locally equicontinuous and since 
$V_{i, \bm{\uppi}_n}(0)=0$ the family is also locally bounded. On the other hand, we have from (A1) that
\begin{equation*}
\int_{\calX} \calV(y)\, \eta(dy)\;\leq \; \kappa_1, \quad \text{for all}\; \eta\in\eG,
\end{equation*}
for some constant $\kappa_1>0$. Thus combining \eqref{03} and \eqref{15} we obtain that the set $\{\varrho^N_{i, \bm{\uppi}_n}, i\geq 1, n\in\NN\}$ is
compact. Now we argue to show that $V^N_{i, \bm{\uppi}_n}\in\sorder(\calV)$ uniformly in $n$. Consider the
sequence $\{K_m\}$ in \eqref{0001}. Let $m$ be such that $C\subset K_m$.
An argument similar to \eqref{a24} gives 
\begin{equation}\label{17}
V^N_{i, \bm{\uppi}_n}(x) \;=\; \inf_{v\in\Udsm}\Exp_x\Bigl[\sum_{i=0}^{\uptau(K_m)-1} (\Hat{r}^N_i(X_i, v(X_i), \bm{\uppi}_n) -
\varrho_{i, \bm{\uppi}_n}) + V^N_{i, \bm{\uppi}_n}(X_{\uptau(K_m)})\Bigr], \quad \forall\; x\in K_m^c.
\end{equation}
By \eqref{a11.51},  there exists constant $\kappa_2>0$ so that for any $x\in K^c_m$ we have
\begin{equation}\label{18}
\sup_{v\in\Usm}\,\Exp^v_x\Big[\sum_{i=0}^{\uptau(K_m)-1} \calV(X_i)\Big]\;\leq \kappa_2\;(\calV(x)+1).
\end{equation}
Since $\sup_{\bm{\uppi}_n}\sup_{u\in\Act} \Hat{r}^N_i(\cdot, u, \bm{\uppi}_n)\in\sorder(\calV)$ (by \eqref{03}) we get from \eqref{17} and \eqref{18} that
\begin{equation*}
\sup_{n}\abs{V^N_{i, \bm{\uppi}_n}}\in\sorder(\calV).
\end{equation*}
Hence along some subsequence the following holds:
\begin{equation*}
\begin{gathered}
V^N_{i, \bm{\uppi}_n}\to V^N_i, \; \text{in}\; \calC_{loc}(\calX), \quad \varrho^N_{i, \bm{\uppi}_n}\to \varrho^N_i, \quad V^N_i\in\sorder(\calV),
\\
v^{i, n}\to v^i,  \; \text{in}\; (\Usm, \D_M), \quad \norm{\eta_{v^{i,n}}-\eta_{v^i}}_{TV}\to 0.
\end{gathered}
\end{equation*}
We can pass the limit in \eqref{15} and \eqref{16} (see \eqref{14}) to obtain
\begin{align*}
V^N_{i}(x) + \varrho^N_{i} &= \min_{u\in\Act}\{\Hat{r}^N_i(x, u, \bm{\uppi}) + 
\int_{\calX} V^N_{i}(y) P(dy|x, u) \},
\\
\varrho^N_{i} &= \int_{\calX\times\Act} \Hat{r}^N_i(y, u, \bm{\uppi}) v^{i}(du|y)\, \eta_{v^{i}}(dy).
\end{align*}
Hence using Theorem~\ref{Thm-1} we have $(V^N_i, \varrho^N_i)= (V^N_{i, \bm{\uppi}}, \varrho^N_{i, \bm{\uppi}})$ and $v^i$ is a minimizing
selector of the above equation. This shows that $\eta_{v^{i}}\in\calA_i(\bm{\uppi})$. Hence the proof.
\end{proof}

Now we are ready to prove Theorem~\ref{Thm-3}.
\begin{proof}[Proof of Theorem~\ref{Thm-3}]
We note that $\bm{\uppi}\to\bm{\calA}(\bm{\uppi})$ is non-empty, compact and convex set valued (by Lemma~\ref{L4.2})
 where $\eG^N$ is a convex, compact set. We can view
$\eG^N$ as a subset of $\calM(\calX)\times\cdots\times\calM(\calX)$ which is a locally convex (with respect to weak topology)
Hausdorff space. By Lemma~\ref{L4.2}, Lemma~\ref{L4.3} and \cite[Theorem~17.10]{ali-bor} we obtain that the 
$\bm{\uppi}\to\bm{\calA}(\bm{\uppi})$ has closed graph. Thus applying Kakutani-Fan-Glicksberg fixed point theorem
\cite[Corollary~17.55]{ali-bor} we have $\bm{\uppi}\in\bm{\calA}(\bm{\uppi})$. Therefore by definition there exists 
$(v^{1}, \ldots, v^{N})\in (\Usm)^N$ so that for $\bm\uppi=(\uppi_1, \ldots, \uppi_N)$, we have $\uppi_i=\eta_{v^i}$ and
$(v^{1}, \ldots, v^{N})$ is a Nash equilibrium in the sense of Definition~\ref{defi-2}. Hence the proof.
\end{proof}

\begin{remark}\label{R4.2}
Proof of Theorem~\ref{Thm-3} shows that there exists a Nash equilibrium $(v^1, \ldots, v^N)\in(\Usm)^N$ such tha $v^i$ is a 
minimizing selector of 
\begin{equation}\label{19}
V^N_{i, \bm\uppi}(x) + \varrho^N_{i, \bm\uppi} = \min_{u\in\Act}\Big\{\Hat{r}^N_i(x, u, \bm{\uppi}) + 
\int_{\calX} V^N_{i, \bm\uppi}(y) P(dy|x, u) \Big\},
\end{equation}
and $\varrho_{i, \bm\uppi}$ is the optimal value $J^N_i(x, v, \bm{\uppi})$ defined in \eqref{9.55}. Now let us argue that any 
Nash equilibrium in $(\Usm)^N$ would be minimizing selector of \eqref{19}. Let $\bm{\uppi}=(\uppi_1, \ldots, \uppi_N)$ and 
$(v^1, \ldots, v^N)$ forms a Nash equilibrium in the sense of Definition~\ref{defi-2}. Using Theorem~\ref{Thm-1} we find solution of
\eqref{19}. By definition (see \eqref{10}), we note that $v^i$ is an optimal control and therefore by Theorem~\ref{Thm-1} we see that
$v^i$ is a minimizing selector of \eqref{19}. This also shows that $\bm{\uppi}\in\bm{\calA}(\bm{\uppi})$.
\end{remark}

Now it is easy to guess the proof of Theorem~\ref{Thm-2}. One can complete the proof by following the footsteps of Theorem~\ref{Thm-3}.
However, we add a sketch of the proof below. Reader may also like to look at \cite{anup-ari} for a similar argument
but under controlled diffusion settings.

\begin{proof}[Proof of Theorem~\ref{Thm-2}]
We consider the map $\mu\in\eG\mapsto \calA(\mu)$. By Lemma~\ref{L4.1}, $\eG$ is compact and convex and
$\eG$ is also a subset of a locally convex Hausdorff space $\calM(\calX)$. We can also mimic the arguments
of Lemma~\ref{L4.2} and Lemma~\ref{L4.3} in this set up. Rest of of the proof follows the same argument as above.
\end{proof}
\section{\bf Convergence of Nash equilibria}\label{S-convergence}

In this section we study the convergence of Nash equilibria which we find in Section~\ref{S-Nash}.
It has been shown in \cite{lasry-lions, feleqi, anup-ari}  that the $N$-person game Nash equilibrium converges to the solution of MFG. Similar results are known for controlled diffusion processes with ergodic costs. To justify 
the limit we need to have some kind of uniqueness of the limiting
equilibria. Such uniqueness is generally obtained by imposing convexity property on the Hamiltonian. To do so we define for 
$\bm{\mu}=(\mu_1, \ldots, \mu_N)\in \eG^N$, 
$$r^N_{\bm\mu}(x, u)\;\df\; \int_{(\calX)^N} r(x, u, \frac{1}{N}\sum_{j=1}^N \delta_{y_j})\, \Pi_{j=1}^N \mu_j(dy_j).$$
Consider the unique pair $(V^N_{\bm\mu}, \varrho^N_{\bm\mu})\in\calC(\calX)\cap\sorder(\calV)\times\R_+, \, V^N_{\bm\mu}(0)=0,$ that satisfies
\begin{equation}\label{200}
V^N_{\bm\mu}(x) + \varrho^N_{\bm\mu}\;=\;\min_{u\in\Act}\Bigl\{r^N_{\bm\mu}(x, u)  + \int_{\calX} V^N_{\bm\mu}(y) P(dy|x, u)\Bigr\}.
\end{equation}
Existence of such unique pair is guaranteed by Theorem~\ref{Thm-1}. We assume that
\begin{itemize}
\item[{\bf (A7)}] $\Act$ is a convex set. Moreover, for every $x\in\calX$, the following map 
$$u\mapsto r^N_{\bm\mu}(x, u)  + \int_{\calX} V^N_{\bm\mu}(y) P(dy|x, u),$$
is strictly convex, uniformly in $N$, where $V^N_{\bm\mu}$ is given by \eqref{200}.
\end{itemize}
It should be observed that ${\bm{\mu}}$ depends on $N$ and might vary with $N$.
By (A7), for every $x\in\calX$ and $\theta\in(0, 1)$ there exists a positive constant $\kappa_x$, independent of $N$, such that for all
$u, u_1\in\Act$, we have
\begin{align*}
&r^N_{\bm\mu}(x, \theta u + (1-\theta) u_1)  + \int_{\calX} V^N_{\bm\mu}(y) P(dy|x, \theta u + (1-\theta) u_1) 
\\
&\geq
\theta \Big[r^N_{\bm\mu}(x, u)  + \int_{\calX} V^N_{\bm\mu}(y) P(dy|x, u)\Big]
+ (1-\theta) \Big[r^N_{\bm\mu}(x, u_1)  + \int_{\calX} V^N_{\bm\mu}(y) P(dy|x, u_1)\Big] - \kappa_x.
\end{align*}

Our main result of this section is the following.
\begin{theorem}\label{Thm-4}
Let (A1)--(A7) hold.
Let $\{V^N_{i, \bm{\uppi}}, \varrho^N_{i, \bm{\uppi}}, \bm{\uppi}^N: 1\leq i\leq N\}$ form a Nash equilibrium in the sense that $(V^N_{i, \bm{\uppi}}, \varrho^N_{i, \bm{\uppi}}, \bm{\uppi})$ satisfy \eqref{19} and  $(v^1, \ldots, v^N)\in(\Usm)^N$ satisfies
$\eta_{v_i}=\uppi^N_i$ for all $i$. Then the following holds:
\begin{enumerate}
\item[{(i)}] the family $\{V^N_{i, \bm{\uppi}}, 1\leq i\leq N, N\geq 1\}$ is locally equicontinuous and locally bounded. $\{\varrho^N_{i, \bm{\uppi}}, \uppi^N_i, \; 1\leq i\leq N\}$
is also pre-compact in $\R\times\calP(\calX)$;
\item[{(ii)}] for any compact $K\subset \calX$, we have 
$$\sup_{1\leq i, j\leq N}\Big(\sup_{x\in K}\abs{V^N_{i, \bm{\uppi}}(x)-V^N_{j, \bm{\uppi}}(x)}+ \abs{\varrho^N_{i, \bm{\uppi}}-\varrho^N_{j, \bm{\uppi}}}
+ \norm{\uppi^N_i-\uppi^N_j}_{TV}\Big)
\to 0,$$
as $N\to\infty$;
\item[{(iii)}] any subsequential limit $(V, \varrho, \uppi)$ of $\{V^N_{i, \bm{\uppi}}, \varrho^N_{i, \bm{\uppi}}, {\uppi}^N_i\}$ forms a MFG solution
in the sense of Definition~\ref{defi-1}.
\end{enumerate}
\end{theorem}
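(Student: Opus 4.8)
The plan is to establish the three assertions in the order stated, since each one feeds into the next. For part (i), I would revisit the proof of Lemma~\ref{L4.3}: the key observation is that the running costs $\Hat{r}^N_i(\cdot, u, \bm{\uppi})$ are controlled \emph{uniformly in both $i$ and $N$} thanks to the bound \eqref{03} in (A5)(2), namely $\Hat{r}^N_i(x,u,\bm{\uppi}) \leq g_0(x) + \int_\calX g_1(y)\,\tfrac1{N-1}\sum_{j\neq i}\uppi_j(dy)$, which is bounded by $g_0(x) + \kappa_1$ after using \eqref{0303} (since $g_1 \in \sorder(\calV)$). So the same split-chain / minorization argument behind Remark~\ref{R-equi} and the representation \eqref{17}--\eqref{18} gives local equicontinuity and local boundedness of $\{V^N_{i,\bm{\uppi}}\}$ that is uniform over $i$ and $N$, not just along a fixed sequence. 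Compactness of $\{\varrho^N_{i,\bm{\uppi}}\}$ follows from \eqref{13.5} (equivalently \eqref{15}) together with $\sup_{\eta\in\eG}\eta(\calV) \leq \kappa_1$, and pre-compactness of $\{\uppi^N_i\}$ in $\calP(\calX)$ is just tightness from \eqref{0303}. I would then invoke (A6) and the compactness of $(\Usm,\D_M)$ to upgrade weak convergence of the $\uppi^N_i$ to total-variation convergence along subsequences.

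For part (ii), the propagation-of-chaos-type estimate, I would first argue that for a Nash equilibrium the relevant empirical-measure cost $\Hat{r}^N_i(x,u,\bm{\uppi})$ differs negligibly from the ``symmetrized'' cost built from all $N$ measures. Using \eqref{extra} one removes/adds one marginal at cost $\order(1/N)$ uniformly over compacta, and Lemma~\ref{L5.1} lets one replace each $\uppi^N_j$ by its truncation $(\uppi^N_j)_{\tilde K}$ with error $\eps$ uniformly in $N$; on the compact $\tilde K$ the remaining discrepancy between $\Hat{r}^N_i$ and $\Hat{r}^N_j$ is controlled by \eqref{04} together with $\eD_q$-estimates between empirical measures, which vanish because the truncated marginals differ only in the $i$-th versus $j$-th slot — an $\order(1/N)$ effect after integrating. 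Hence $\sup_{i,j}\sup_{(x,u)\in K\times\Act}\abs{\Hat{r}^N_i(x,u,\bm{\uppi}) - \Hat{r}^N_j(x,u,\bm{\uppi})} \to 0$. Feeding this into the HJB equations \eqref{19} for indices $i$ and $j$ and using the uniform local equicontinuity from part (i) plus the normalization $V^N_{i,\bm{\uppi}}(0)=0$, a standard comparison/stability argument for ergodic HJB (as in Theorem~\ref{Thm-1}, using \eqref{17}--\eqref{18} to control the tails) gives $\sup_{x\in K}\abs{V^N_{i,\bm{\uppi}} - V^N_{j,\bm{\uppi}}} \to 0$ and $\abs{\varrho^N_{i,\bm{\uppi}} - \varrho^N_{j,\bm{\uppi}}} \to 0$. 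The invariant-measure convergence $\norm{\uppi^N_i - \uppi^N_j}_{TV}\to 0$ then follows from (A6) once one knows the minimizing selectors $v^i, v^j$ become close in $(\Usm,\D_M)$, which is where the strict convexity in (A7) is essential: the minimizer of a strictly convex function depends continuously on the function, so near-equal Hamiltonians force near-equal selectors.

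For part (iii), I would extract a subsequence along which (by (i), (ii)) $V^N_{i,\bm{\uppi}}\to V$ in $\calCl(\calX)$ with $V\in\sorder(\calV)$, $\varrho^N_{i,\bm{\uppi}}\to\varrho$, $\uppi^N_i\to\uppi$ in total variation, and $v^i\to v$ in $(\Usm,\D_M)$, all \emph{independently of $i$} by part (ii). The crucial point is identifying the limit of $\Hat{r}^N_i(x,u,\bm{\uppi})$: since $\uppi^N_j$ are all asymptotically equal to $\uppi$ (part (ii)) and converge to it in TV, a law-of-large-numbers argument for the empirical measure $\tfrac1{N-1}\sum_{j\neq i}\del_{Y_j}$ with independent $Y_j\sim\uppi^N_j$, combined with the continuity/growth properties \eqref{04} and (A5)(1) and Lemma~\ref{L5.1}, gives $\Hat{r}^N_i(x,u,\bm{\uppi})\to r(x,u,\uppi)=r_\uppi(x,u)$ uniformly on compacta. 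Passing to the limit in \eqref{19} and in the selector identity $\varrho^N_{i,\bm{\uppi}} = \int \Hat{r}^N_i(y,u,\bm{\uppi})v^i(du|y)\,\eta_{v^i}(dy)$ (justified as in \eqref{14} using (A2), (A6), and dominated convergence with the $\sorder(\calV)$ bounds), one obtains that $(V,\varrho)$ solves \eqref{06} with $\mu=\uppi$, that $\varrho=\varrho_\uppi$, that $v$ is a minimizing selector, and that $\uppi=\eta_v$ — i.e.\ $\uppi\in\calA(\uppi)$, which is exactly Definition~\ref{defi-1}. The main obstacle is part (ii): making the cost-discrepancy estimate genuinely \emph{uniform in $i$ and $N$} requires care in chaining \eqref{extra}, Lemma~\ref{L5.1}, and \eqref{04}, and then the passage from near-equal Hamiltonians to near-equal value functions and selectors is exactly where the uniform strict convexity (A7) and the uniform equicontinuity/tail estimates must be combined — this is the technical heart of the theorem.
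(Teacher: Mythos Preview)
Your proposal is correct and follows essentially the paper's approach: the paper also pivots through the fully symmetrized cost $\tilde r^N(x,u)=\int r(x,u,\tfrac1N\sum\delta_{y_j})\prod\uppi_j(dy_j)$ and its own HJB pair $(\tilde V^N,\tilde\varrho^N)$, compares $V^N_{i,\bm\uppi}$ to $\tilde V^N$ via the stopping-time representation (taken over the \emph{small} ball $\B_\kappa$ so that equicontinuity at $0$ controls the terminal term), and identifies the limiting cost via the Hewitt--Savage theorem (your ``law-of-large-numbers argument''). The only organizational difference is that the paper defers the total-variation estimate $\sup_{i,j}\norm{\uppi^N_i-\uppi^N_j}_{TV}\to 0$ to part~(iii), obtaining it as a byproduct of the convergence of the minimizing selectors of the $\tilde V^N$-equation to the unique limit selector guaranteed by (A7), rather than arguing selector closeness directly in part~(ii).
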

From (A7) we note that there exists a unique continuos measurable selector \eqref{19}. In fact, this is the only minimizing selector. This follows from
the definition of minimizing selector \eqref{13} and (A7).
In view of Remark~\ref{R4.2} we see that every Nash equilibrium of $N$-person game converges to a solution of MFG. 
Under diffusion settings, convexity assumption on the Hamiltonian helps to conclude that that the optimal distributions
of Nash equilibrium converges to each other as $N\to\infty$ (see \cite{anup-ari, feleqi}). 
(A7) play similar role in our setting. Similar convexity property is also assumed in \cite{gomes-mohr-souza, gomes-2} but for finite state process.

\begin{proof}[Proof of Theorem~\ref{Thm-4}]
(i)\, Using \eqref{01} we can find constant $\kappa_1$ such that
\begin{equation}\label{21}
\int_{\calX} \calV(y)\, \eta_v(dy)\; \leq \; \kappa_1, \quad \text{for all}\; v\in\Usm.
\end{equation}
Since $h$ is inf-compact, we have $\{\uppi^N_i,\; 1\leq i\leq N\}_{N\geq 1}$ pre-compact in $\calP(\calX)$. Again by \cite[Theorem~14.0.1]{meyn-tweedie}
we have
$$\varrho^N_{i, \bm{\uppi}} = \int_{\calX} \Hat{r}^N_i(y, v^i(y), \bm{\uppi})\, \eta_{v^i}(dy).$$
Thus using \eqref{03} and \eqref{21} we have $\{\varrho^N_{i, \bm{\uppi}}, 1\leq i\leq N, N\geq 1\}$ bounded.
Now we show that $\{V^N_{i, \bm{\uppi}}\}$ is locally equicontinuous family. Fix $K\subset\calX$ compact.
$r:K\times\Act\times\calP(K)\to\R$ being continuous, we obtain that for every $K\subset\calX$ compact, the
following maps forms a equicontinuous family for $1\leq i\leq N, \, N\geq 1$,
$$(x, u)\mapsto \int_{(\calX)^N} r(x, u, \frac{1}{N}\sum_{j=1}^N \delta_{\mathfrak{P}_C(y_j)}) \Pi_{j=1}^N \nu_j(dy_j)
=\int_{(K)^N} r(x, u, \frac{1}{N}\sum_{j=1}^N \delta_{y_j}) \Pi_{j=1}^N \tilde\nu_j(dy_j),$$
where $\nu_j\in\calP(C)$. The above result is due to the fact that $\frac{1}{N}\sum_{j=1}^N \delta_{y_j}\in\calP(C)$.
Thus using Lemma~\ref{L5.1} we see that $\{\hat{r}^N_i\}$ forms a locally equicontinuous family. From 
Section~\ref{Ergodic HJB} we know that the value function $V^N_{i, \bm{\uppi}}$ is the limit of 
$V^{\alpha, N}_{i, \bm{\uppi}}(\cdot) -V^{\alpha, N}_{i, \bm{\uppi}}(0)$ where
\begin{equation*}
J^{\alpha, N}_{i, \bm{\uppi}}(x, v) \; =\; 
\Exp_x\Bigl[\sum_{j=0}^\infty \alpha^j\, \hat{r}^N_{i}(X_j(v), v(X_j), \bm{\uppi}) \Bigr],
\quad \text{and},\quad V^{\alpha, N}_{i, \bm{\uppi}}(x) \;=\; \inf_{v\in\Usm} J^{\alpha, N}_{i, \bm{\uppi}}(x, v).
\end{equation*}
By Lemma~\ref{L6.1} and Remark~\ref{R-equi}, we obtain that $\{V^{\alpha, N}_{i, \bm{\uppi}}(\cdot) -V^{\alpha, N}_{i, \bm{\uppi}}(0): 1\leq i\leq N, N\geq 1\}$ forms a locally equicontinuous family and thus $\{V^N_{i, \bm{\uppi}}\}$
is locally equicontinuous.

(ii)\, Now we consider the unique pair $(\tilde{V}^N, \tilde{\varrho}^N)\in\calC(\calX)\cap\sorder(\calV)\times\R$
that satisfies
\begin{equation}\label{22}
\tilde{V}^N(x) + \tilde{\varrho}^N \; =\; \min_{u\in\Act}\{\tilde{r}^N(x, u) + \int_{\calX} \tilde{V}^N(y)\, P(dy|x, u)\},
\end{equation}
where 
$$\tilde{r}^N(x, u)\;\df\; \int_{(\calX)^N} r(x, u, \frac{1}{N}\sum_{j=1}^N \delta_{y_j})\Pi_{i=1}^N \, \uppi_j(dy_j).$$
By Theorem~\ref{Thm-1} and \eqref{22}, $\tilde{\varrho}^N$ 
is the optimal ergodic value with running cost $\tilde{r}^N$.
We also know that the following holds.
\begin{equation}\label{23}
\varrho^N_{i, \bm{\uppi}}=\inf\Big\{\int_{\calX} \Hat{r}^N_{i, \bm{\uppi}}(y, v(y))\, \eta_v(dy): v\in\Usm\Big\},
\quad \tilde\varrho^N=\inf\Big\{\int_{\calX} \tilde{r}^N(y, v(y))\, \eta_v(dy): v\in\Usm\Big\}.
\end{equation}
Hence by \eqref{23}, we obtain
\begin{equation}\label{24}
\abs{\varrho^N_{i, \bm{\uppi}}-\tilde\varrho^N}\;\leq\; \sup_{v\in\Usm}\Big\{\int_{\calX}
\abs{\hat{r}^N_{i, \bm{\uppi}}(y, v(y))
-\tilde{r}^N(y, v(y))}\, \eta_v(dy)\Big\}.
\end{equation}
Now for any compact $K\subset\calX$, we have a constant $\kappa_2=\kappa_2(K)$ such that for 
any $N$-tuple $(y_1, \ldots, y_N)\in K\times\cdot\times K$, we have
\begin{equation}\label{25}
\eD_p(\frac{1}{N-1}\sum_{j=1}^{N-1}\delta_{y_j}, \frac{1}{N}\sum_{j=1}^{N}\delta_{y_j})
\;\leq\; \frac{\kappa_2}{N^{\frac{1}{p}}}.
\end{equation}
On the other hand, $\calP(K)$ is a compact subset of $\calP(\calX)$.
Thus using \eqref{03}, \eqref{21}, \eqref{24}, \eqref{25} and Lemma~\ref{L5.1} we see that 
\begin{equation}\label{25.5}
\sup_{1\leq i\leq N}\abs{\varrho^N_{i, \bm{\uppi}}-\tilde\varrho^N}\to 0, 
\quad \text{as}\quad N\to\infty.
\end{equation}
Let $K\subset\calX$ be compact. From \eqref{a21} we know that for any $\kappa\in(0, 1)$, there exists a constant $b_\kappa$ satisfying
\begin{equation}\label{26}
\sup_{v\in\Usm}\Big(\Exp_x^v[\calV(X_{\uptau(\B_\kappa)})] + 
\beta_1\Exp^v_x\Big[\sum_{j=0}^{\uptau(\B_\kappa)-1} \calV(X_j)\Big]\Big)
\leq \calV(x) + b_\kappa.
\end{equation}
On the other hand, by \eqref{a24} we have
\begin{equation}\label{27}
\begin{split}
V^N_{i, \bm{\uppi}}(x) & = \min_{v\in\Udsm} \Exp_x\Bigl[\sum_{j=0}^{\uptau(\B_\kappa)-1} (\Hat{r}^N_{i, \bm{\uppi}}(X_j, v(X_j))
- \varrho^N_{i, \bm{\uppi}}) + V^N_{i, \bm{\uppi}}(X_{\uptau(\B_\kappa)}) \Bigr],
\\
\tilde{V}^N(x) & = \min_{v\in\Udsm} \Exp_x\Bigl[\sum_{j=0}^{\uptau(\B_\kappa)-1} (\tilde{r}^N(X_j, v(X_j))
- \tilde\varrho^N + \tilde{V}^N(X_{\uptau(\B_\kappa)}) \Bigr].
\end{split}
\end{equation}
Therefore using \eqref{27} we obtain that
\begin{align}\label{28}
\abs{V^N_{i, \bm{\uppi}}(x)-\tilde{V}^N(x)} &\leq \sup_{v\in\Udsm}\Big|\Exp_x\Bigl[\sum_{j=0}^{\uptau(\B_\kappa)-1} (\Hat{r}^N_{i, \bm{\uppi}}(X_j, v(X_j))- \tilde{r}^N(X_j, v(X_j))+ \tilde\varrho^N- \varrho^N_{i, \bm{\uppi}}) \Big]\Big|\nonumber
\\
&\, \qquad + \sup_{y\in\B_\kappa}\abs{V^N_{i, \bm{\uppi}}(y)-\tilde{V}^N(y)}.
\end{align}
To prove (ii) it is enough to show that for any $\eps>0$ we can find $N_0$ large so that 
\begin{equation}\label{29}
\sup_{x\in K}\abs{V^N_{i, \bm{\uppi}}(x)-\tilde{V}^N(x)}\; \leq\; \eps, \quad \forall \; N\geq N_0, \quad \forall \; i\in\{1, \ldots, N\}.
\end{equation}
We use \eqref{26} and \eqref{28} to establish \eqref{29}. We may use arguments similar to (i) to conclude that the family $\{\tilde{V}^N, N\geq 1\}$ is locally equicontinuous and bounded. Therefore we can find $\kappa>0$ small so that
\begin{equation}\label{30}
\sup_{y\in\B_\kappa}\abs{V^N_{i, \bm{\uppi}}(y)-\tilde{V}^N(y)}\; \leq \; \eps/4,\quad \forall\; i\in\{1, \ldots, N\},
\end{equation}
where we use the fact that $\tilde{V}^N(0)=V^N_{i, \bm{\uppi}}(0)=0$. From \eqref{03} and \eqref{21} we note that
$\sup_{1\leq i\leq N}\sup_{u\in\Act}\Hat{r}^N_{i, \bm{\uppi}}(\cdot, u)\in\sorder(\calV)$,\, 
$\sup_{u\in\Act}\tilde{r}^N(\cdot, u)\in\sorder(\calV)$ uniformly in $N$. Thus we can find a cut-off function $\uppsi$ that vanishes outside
a compact subset of $\calX$ and
\begin{align}\label{31}
\sup_{v\in\Udsm}\Big|\Exp_x\Bigl[\sum_{j=0}^{\uptau(\B_\kappa)-1} (1-\uppsi(X_j))\big(\Hat{r}^N_{i, \bm{\uppi}}(X_j, v(X_j))- \tilde{r}^N(X_j, v(X_j))
\big) + \tilde\varrho^N- \varrho^N_{i, \bm{\uppi}})\Big]\Big|\; \leq \; \eps/4,
\end{align}
using \eqref{25.5}, \eqref{26} and,
\begin{equation}\label{32}
\sup_{v\in\Udsm}\Big|\Exp_x\Bigl[\sum_{j=m}^{\uptau(\B_\kappa)-1} \uppsi(X_j)(\Hat{r}^N_{i, \bm{\uppi}}(X_j, v(X_j))- \tilde{r}^N(X_j, v(X_j))\Big]\Big|\; \leq \; \eps/4,
\end{equation}
for large $N$ where we use Lemma~\ref{L5.1}, \eqref{25}, \eqref{26}. Thus we have \eqref{29} from \eqref{30}, \eqref{31} and \eqref{32}. The proof
of the fact
\begin{equation}\label{33.5}
\sup_{1\leq i, j\leq N}\, \norm{\uppi^N_i-\uppi^N_j}_{TV}\to 0, \quad \text{as}\quad N\to\infty,
\end{equation}
is a byproduct of (iii) below.

(iii)\, Let $(V, \varrho, \uppi)$ be any subsequential limit of $\{V^N_{i, \bm{\uppi}}, \varrho^N_{i, \bm{\uppi}}, 
{\uppi}^N_i\}$ as $N\to\infty$.
We have already seen that
$$\abs{\hat{r}^N_{i, \bm{\uppi}} - \tilde{r}^N}\to 0, \quad \abs{V-\tilde{V}^N}\to 0,$$
uniformly on every compact sets, along the subsequence of $N$. Using \eqref{26} and \eqref{27} it is easy to see that $\tilde{V}^N\in\sorder(\calV)$
uniformly in $N$ (see for example \eqref{a17} in Section~\ref{Ergodic HJB}). By Lemma~\ref{L5.1} it is easy to see that $\{\tilde{r}^N, \; N\geq 1\}$ is a family of locally equicontinuous functions. Hence 
letting $N\to\infty$ in \eqref{22} we obtain
\begin{equation}\label{33}
V(x) + \varrho \; =\; \min_{u\in\Act}\Big\{\tilde{r}(x, u) + \int_{\calX} V(y) P(dy|x, u)\Big\}
\end{equation}
where $V(0)=0, V\in\sorder(\calV)$ and $\tilde{r}^N\to \tilde{r}(x, u)$. Now we identify $\tilde{r}$. From (A7) we note that 
$$u\mapsto \tilde{r}(x, u) + \int_{\calX} V(y) P(dy|x, u)$$
is a strictly convex function. Therefore there exists a unique minimizing selector $v$. Moreover,  it is easy to see that the minimizing
selector in \eqref{22} converges to $v$. Thus from (A6) we have that $\norm{\uppi^N_i-\eta_v}_{TV}\to 0$ as $N\to\infty$. Thus one can use similar
argument as above to establish \eqref{33.5} (see also \cite{anup-ari}) using unique property of the solution of 
\eqref{33} and also the minimizing selector. We claim that
\begin{equation}\label{34}
\tilde{r}(x, u) = r(x, u, \eta_v).
\end{equation}
This would prove (iii) since by \eqref{33}
$$\varrho\;=\; \int_\calX \tilde{r}(x, v(x)) \eta_{v}(dx).$$
 But the above is a consequence of Lemma~\ref{L5.1} and Hewitt-Savage theorem (see \cite{anup-ari}, \cite{feleqi}). This can be achieved in two steps. (1) show that for any compact $K\subset \calX$,
 $$\abs{\int_{(\calX)^N} r(x, u, \frac{1}{N}\sum_{j=1}^N \del_{y_j}) \, \Pi_{j=1}^N \tilde\uppi^N_j(dy_j)-
\int_{(\calX)^N} r(x, u, \frac{1}{N}\sum_{j=1}^N\del_{y_j}) \, \Pi_{j=1}^N \tilde\eta(dy_j)}\to 0,$$
as $N\to\infty$ where $\tilde{\uppi}^N_j=(\tilde{\uppi}^N_j)_K$ and $\tilde\eta=(\eta_v)_K$. One may follow the calculations of \cite[p.~530]{feleqi} together with \eqref{extra} to achieve this. (2) Then apply Hewitt-Savage theorem to obtain that 
$$\int_{(\calX)^N} r(x, u, \frac{1}{N}\sum_{j=1}^N\del_{y_j}) \, \Pi_{j=1}^N \tilde\eta(dy_j)\to
r(x, u, \tilde\eta),$$
as $N\to\infty$. Now to get \eqref{34} apply Lemma~\ref{L5.1} and the fact that $\eD_p(\tilde\eta, \eta)\to 0$,
as $K$ increases to $\calX$.
\end{proof}

\section{\bf Proof of Theorem~\ref{Thm-1}}\label{Ergodic HJB}
In this section we prove Theorem~\ref{Thm-1}. As we mentioned earlier that existence result of \eqref{06} is well known for very general 
class of controlled processes. Reader may wish to look at \cite{ari-et-al, lerma-lasserre, bertsekas-shreve}
 for more details about these problems.
Using the method of split chain we show that the value function is continuous. 

For $\alpha\in(0, 1)$, we define the $\alpha$-discounted value function as follows. For $v\in\Uadm$,
\begin{equation}\label{a1}
J^\alpha_\mu(x, v) \; =\; \Exp_x\Bigl[\sum_{i=0}^\infty \alpha^i\, r_\mu(X_i(v), v_i(X_i)) \Bigr],
\quad \text{and},\quad V^\alpha_\mu(x) \;=\; \inf_{v\in\Uadm} J^\alpha_\mu(x, v).
\end{equation}
Then by (A2) and \cite{ari-et-al} we have that $V^\alpha_\mu$ is lsc and
\begin{equation}\label{a2}
V^\alpha_\mu(x) \; \df \; \inf_{u\in\Act}\, \big\{r_\mu(x, u) + \alpha \int_{\calX} V^\alpha_\mu(y)\, P(dy| x, u)\big\}
\end{equation}
Also a control $v\in\Udsm$ is optimal for \eqref{a1} if and only if $v(x)$ attains the infimum in \eqref{a2}, for all $x\in\calX$.

\begin{lemma}\label{L6.1}
The family of functions $\{V^\alpha_\mu, \, \al\in(0, 1)\}$ is equicontinuous on every compact subsets of $\calX$.
\end{lemma}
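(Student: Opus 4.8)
The plan is to represent $V^\alpha_\mu(x)$ through an optimal \emph{stationary} control, split the resulting series at a finite horizon $m$, bound the geometric tail by a coupling argument whose rate is furnished by the minorization \eqref{02}, and bound the head by the weak‑Feller continuity (A2); all constants will be independent of $\alpha\in(0,1)$. First I would record the consequences of \eqref{01}: it gives a geometric drift $\sup_{u\in\Act}\int_\calX\calV\,P(\D y\mid x,u)\le(1-\beta_1)\calV(x)+b\,\Ind_C(x)$, whence $\Exp^v_x[\calV(X_n)]\le\calV(x)+b/\beta_1$ for all $n\ge0$ and $v\in\Usm$, so that $\sup_{n,v}\Exp^v_x[\calV(X_n)]\le c_K<\infty$ for $x$ in a compact $K$. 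By (A5)(1), $\bar r_\mu\df\sup_{u\in\Act}r_\mu(\cdot,u)$ is continuous, lies in $\sorder(\calV)$, hence satisfies $\bar r_\mu\le\kappa_\mu(1+\calV)$ and, for each $\eta>0$, $\bar r_\mu\le\eta(1+\calV)+M_\eta$. I also use that the discounted problem \eqref{a1}--\eqref{a2} admits an optimal selector in $\Udsm$.

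Fix $K\subset\calX$ compact and $\eps>0$. For $x,x'\in K$ take $v\in\Udsm$ optimal for $V^\alpha_\mu(x)$ and put $h(y)\df r_\mu(y,v(y))$; then
\[
V^\alpha_\mu(x')-V^\alpha_\mu(x)\;\le\;\sum_{j\ge0}\alpha^{j}\bigl(\Exp^v_{x'}[h(X_j)]-\Exp^v_{x}[h(X_j)]\bigr),
\]
and I split this series at an index $m$ to be chosen. For the tail $j\ge m$ I would realize the two chains (both under $v$) inside a coalescent coupling of the two‑component chain on $\calX\times\calX$. By \eqref{01} this chain has the Lyapunov function $\calV(x)+\calV(x')$, and by \eqref{02} the set $C\times C$ is small for it (from $C\times C$ the joint transition dominates $\gamma$ times the image of $\nu$ under the diagonal embedding); hence the coupled chain is uniformly geometrically ergodic, uniformly over $v\in\Udsm$, and there is a coupling with coalescence time $T$ such that $\Exp_{x,x'}\bigl[(1+\calV(X_j)+\calV(X'_j))\Ind_{\{T>j\}}\bigr]\le\kappa_1\rho^{\,j}(1+\calV(x)+\calV(x'))$ for some $\rho=\rho_K<1$ and all $x,x'\in K$, $j\ge0$. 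Since the terms with $j\ge T$ cancel and $h\le\bar r_\mu\le\kappa_\mu(1+\calV)$, the tail is at most $\kappa_2\sum_{j\ge m}\rho^{\,j}(1+2\sup_K\calV)$, which is $<\eps/2$ for some $m=m(K,\eps)$ \emph{chosen independently of $\alpha$, of $x,x'$, and of $v$}.

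With $m$ fixed, I treat the head $\sum_{j<m}\alpha^{j}(\Exp^v_{x'}[h(X_j)]-\Exp^v_{x}[h(X_j)])$ by truncation. For $N>0$ the function $r_\mu\wedge N$ is bounded and continuous on $\calX\times\Act$, so (A2) makes $(x,v)\mapsto\Exp^v_x[(r_\mu\wedge N)(X_j,v(X_j))]$ continuous on $\calX\times\Usm$; since $\Usm$ is compact, the family $\{\,x\mapsto\Exp^v_x[(r_\mu\wedge N)(X_j,v(X_j))]:v\in\Usm\,\}$ is \emph{equicontinuous on $K$}, for each $j<m$. The truncation error is controlled uniformly: $\Exp^v_x[(h(X_j)-N)^+]\le\Exp^v_x[(\bar r_\mu(X_j)-N)^+]\le\eta\,\Exp^v_x[\calV(X_j)]\le\eta c_K$ as soon as $N\ge\eta+M_\eta$, for all $x\in K$, $v$, and $\alpha$. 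Choosing first $\eta$ with $2m\eta c_K<\eps/4$, then $N$, then $\delta>0$ so small that $\sum_{j<m}\sup_{v\in\Usm}\babs{\Exp^v_{x}[(r_\mu\wedge N)(X_j,v(X_j))]-\Exp^v_{x'}[(r_\mu\wedge N)(X_j,v(X_j))]}<\eps/4$ whenever $\D_\calX(x,x')<\delta$, the head is $<\eps/2$. Together with the symmetric estimate this yields $\babs{V^\alpha_\mu(x)-V^\alpha_\mu(x')}<\eps$ for all $\alpha\in(0,1)$ whenever $x,x'\in K$ and $\D_\calX(x,x')<\delta$, which is the claim.

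I expect the main obstacle to be the uniform‑in‑$v$ control of the coupling rate for the two‑component chain: this is precisely where the minorization \eqref{02} — the discrete‑time counterpart of the uniform ellipticity / Harnack inequality used in the diffusion treatments of \cite{anup-ari,ari-bor-ghosh} — is essential, and it must be pushed through so that $m$ can be fixed \emph{before} $\alpha$. Deducing equicontinuity over $v\in\Usm$ from the joint continuity in (A2) together with compactness of $\Usm$, so that the head can be made small uniformly in the merely measurable optimal selector $v$, is the other point requiring care; the remaining ingredients are the Lyapunov estimates coming from \eqref{01}.
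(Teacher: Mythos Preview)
Your argument is correct and rests on the same mechanism as the paper's: couple two copies of the chain run under the same stationary control so that they agree in law after a random time governed by the minorization \eqref{02} and the drift \eqref{01}, and then invoke (A2) for the remaining finite-horizon piece. The differences are in packaging. The paper makes the coupling explicit via the Athreya--Ney--Nummelin split chain on $\calX\times\calX$, constructs the pseudo-atom $\bar C_1$, and proves the moment bounds for the regeneration time $\uptau^*$ by hand (Lemmas~\ref{L6.2}--\ref{L6.3}); its decomposition is spatial-then-temporal, using a cutoff $\uppsi_n$ so that the far-away part of $r_\mu$ is controlled by $\Exp^*\bigl[\sum_{i\le\uptau^*}\calV(X^{1,*}_i)+\calV(X^{2,*}_i)\bigr]$, and only then truncating in time. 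You instead invoke the $\calV$-weighted coupling rate $\Exp_{x,x'}\bigl[(1+\calV(X_j)+\calV(X'_j))\Ind_{\{T>j\}}\bigr]\le\kappa_1\rho^{\,j}(1+\calV(x)+\calV(x'))$ as a known consequence of drift plus minorization, split temporally first, and absorb the unboundedness of $r_\mu$ by truncating at level $N$ afterward. Your route is shorter if one grants the coupling estimate; the paper's route is self-contained, and its Lemmas~\ref{L6.2}--\ref{L6.3} are precisely the ingredients that justify the bound you quote. One small imprecision worth tightening: the \emph{independent} product transition from $C\times C$ dominates $\gamma^2\,\nu\otimes\nu$ (this is what \eqref{a4} in the paper records), not the diagonal push-forward of $\nu$; the diagonal minorization you need for coalescence comes from the \emph{shared-randomization} coupling, which is exactly what the split-chain construction realizes.
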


To prove Lemma~\ref{L6.1} we construct split chain. Let $\calY= \calX\times\calX$ and $\bar{x}=(x_1, x_2)\in\calY$.
For $\bar{u}=(u_1, u_2)\in\Act\times\Act$, we define $\bar{P}(\cdot|\bar{x}, \bar{u})\in\calP(\calY)$ as
\begin{align}\label{a3}
\bar{P}(B_1\times B_2|\bar{x}, \bar{u})\; =\; P(B_1| x_1, u_1) P(B_2| x_2, u_2), \quad B_i\in\calB(\calX).
\end{align}
Let $v\in\Usm$ be a stationary Markov control. By $\{X^k_i\}\df \{X^k_i(v)\}$ we denote the Markov chain with $X^k_0=x_k$
for $k=1,2$. We assume that $\{X^1_i\}$ and $\{X^2_i\}$ are independent of each other. In fact, this can be achieved by
constructing a
$\calY$ valued Markov chain $\bar{X}=\{X^1_i, X^2_i\}$ with transition probability given by \eqref{a3} and control $\bar{v}(y_1,y_2)\df(v(y_1), v(y_2))$.

\subsection{The pseudo-atom construction.} Now we introduce the Athreya-Ney-Nummelin construction of pseudo-atom.
Readers are referred to \cite{meyn-tweedie} for more details on such construction. We recall the measure $\nu$ and the
set $C$ from \eqref{02}. Denote $\bar{C}=C\times C$ and $\bar\Act=\Act\times\Act$.
Define $\bar{\nu}\in\calP(\bar{C})$ as $\bar{\nu}=\nu\times\nu$. It is easy to see from
\eqref{02} that 
\begin{equation}\label{a4}
\inf_{\bar{x}\in C\times C}\inf_{\bar{u}\in\Act\times\Act} \bar{P}(A|\bar{x}, \bar{u})\geq \gamma^2\,  \bar{\nu}(A),
\quad A\in\calB(\calY).
\end{equation}
To see this, denote by $A_{z_1}= \{z_2: \bar{z}\in A\}$ and $A_{z_2}= \{z_1: \bar{z}\in A\}$. These are Borel-measurable sets by \cite[Theorem~8.2]{rudin}. Then by \cite[Theorem~8.6]{rudin} we have for 
$\bar{x}\in\bar{C}$, $\bar{u}\in\bar{\Act}$,
\begin{align*}
P(A|\bar{x}, \bar{u}) &=\int_{\calX} P(A_{z_1}|x_2, u_2) P(dz_1|x_1, u_1)
\\
&\geq \gamma \int_{\calX} \nu(A_{z_1}) P(dz_1|x_1, u_1)
\\
&= \gamma  \big(\nu\times P(\cdot|x_1, u_1)\big)(A)
\\
&=\gamma \int_{\calX} P(A_{z_2}|x_1, u_1) \nu(dz_2)
\\
&\geq \gamma^2 \int_{\calX} \nu(A_{z_2}) \nu(dz_2) =\gamma^2 \bar{\nu}(A).
\end{align*}
Therefore the Markov chain $\bar{X}$ satisfies minorization condition \eqref{a4} with minorizing measure $\bar{\nu}$.
Let $\gamma_1=\frac{\gamma^2}{2}$. Let $\calY^*=\calY\times\{0, 1\}=\calX\times\calX\times\{0, 1\}$.
For $B\in\calB(\calY)$, we denote $B_0=B\times\{0\}$ and $B_1=B\times\{1\}$. For $\mu\in\calP(\calY)$
we define $\mu^*\in\calP(\calY^*)$ as follows. For $B\in\calB(\calY)$,
\begin{equation}\label{a5}
\begin{split}
\mu^*(B_0) &= (1-\gamma_1)\, \mu(B\cap \bar{C}) + \mu(B\cap\bar{C}^c),
\\[2mm]
\mu^*(B_1) &= \gamma_1\, \mu(B\cap \bar{C}).
\end{split}
\end{equation}
Clearly, $\mu^*(B_0)+\mu^*(B_1) = \mu(B)$ and if $B\subset \bar{C}^c$, then $\mu^*(B_0)=\mu(B)$.
On a suitable probability space $(\Omega^*, \mathcal{F}^*, \Prob^*)$, we define an $\calY^*$ valued Markov chain $Z_n\equiv (X^*_n, i^*_n)$, where $X^*_n=(X^{1, *}_n , X^{2, *}_n)\in\calY$, such that
\begin{itemize}
\item[{(1)}] The probability kernel of $\{Z_n\}$ is given as follows. For $\hat{z}=(\bar{z}, i)\in\calY^*$,
\begin{equation}\label{a6}
\hat{P}(d\hat{y}| \hat{z})=\left\{
\begin{array}{lll}
\bar{P}^*(d\hat{y}|\bar{z}, \bar{v}(\bar{z})) & \text{if}\; \hat{z}\in\calY_0\setminus \bar{C}_0, 
\\[2mm]
\frac{1}{1-\gamma_1}(\bar{P}^*(d\hat{y}|\bar{z}, \bar{v}(\bar{z}))- \gamma_1 \, \bar\nu^*(d\hat{y}))
& \text{if}\; \hat{z}\in\bar{C}_0,
\\[2mm]
\bar{\nu}^*(d\hat{y}) & \text{if}\; \hat{z}\in \calY_1.
\end{array}
\right.
\end{equation}
\item[{(2)}] The initial distribution is given as follows. For $B\in\calB(\calY)$,
\begin{align*}
\Prob^*(Z_0\in B_0) &= (1-\gamma_1) \, \Ind_{B\cap\bar{C}}(\bar{x}) + \Ind_{B\cap\bar{C}^c}(\bar{x}),
\\[2mm]
\Prob^*(Z_0\in\B_1) & = \gamma_1\, \Ind_{B\cap\bar{C}}(\bar{x}).
\end{align*}
\end{itemize}
It is well known that under above construction the probability laws of $\{X^*_n\}_{n\geq 1}$ and
$\{\bar{X}_n\}_{n\geq 1}$ are same \cite{meyn-tweedie}. We also observe that if the Markov chain 
$\{Z_n\}$ starts from $\calY^*\setminus \bar{C}^c\times\{1\}$ it stays in $\calY^*\setminus \bar{C}^c\times\{1\}$. The transition probability from $\bar{C}_1$ is same for all point in $\bar{C}_1$. That is why 
$\bar{C}_1$ is referred to as a pseudo-atom. Denote
$$\uptau^*\; \df\; \inf\{n> 0 : Z_n\in\bar{C}_1\}.$$
By our construction we see that the law of $\{X^{1, *}_{\uptau^*+n}\}_{n\geq 1}$ and
$\{X^{2, *}_{\uptau^*+n}\}_{n\geq 1}$ are same (see also \cite{biswas-budhiraja}).
\begin{lemma}\label{L6.2}
There exist a constant $\theta>0$, independent of $\bar{x}$ and $v\in\Usm$, satisfying
\begin{equation}\label{a7}
\Exp^*_{\hat{z}}\Big[\sum_{i=0}^{\uptau^*} \calV({X}^{1, *}_i) + \calV({X}^{2, *}_i)\Big]\; \leq \; \theta(\calV(x_1)+\calV(x_2)+ 1),
\end{equation}
for $\Hat{z}=(x_1, x_2, i)\in\calY_0\cup\bar{C}_1$.
\end{lemma}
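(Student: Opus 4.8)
\emph{Overview.} The claim is a Foster--Lyapunov estimate for the split chain, and I would prove it with the additive Lyapunov function $W(y_1,y_2)\df\calV(y_1)+\calV(y_2)\,(\geq2)$ on $\calY=\calX\times\calX$, lifted to $\calY^*$ by $W^*(\bar y,i)\df W(\bar y)$; since $W^*(\hat z)=\calV(x_1)+\calV(x_2)$ for $\hat z=(x_1,x_2,i)$, the assertion \eqref{a7} is exactly $\Exp^*_{\hat z}\bigl[\sum_{i=0}^{\uptau^*}W^*(Z_i)\bigr]\leq\theta(W^*(\hat z)+1)$. The three ingredients are: a drift inequality for the product kernel $\bar P$; its translation to the split kernel $\hat P$, which shows that $\bar C_0$ is the only state visited strictly before $\uptau^*$ on which $W^*$ lacks a strict negative drift (while $W^*$ stays bounded there); and a comparison/optional-stopping argument run up to $\uptau^*$, together with a uniform lower bound on the one-step probability of hitting the pseudo-atom $\bar C_1$ out of $\bar C_0$.

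\emph{Step 1 (drift for the product chain).} Summing \eqref{01} over the two coordinates and using \eqref{a3}, for every $\bar u=(u_1,u_2)$, writing $\bar PW(\bar x,\bar u)\df\int_\calY W(\bar y)\,\bar P(d\bar y\mid\bar x,\bar u)$,
\[
\bar P W(\bar x,\bar u)-W(\bar x)\;\leq\;-\beta_1\bigl[\calV(x_1)\Ind_{C^c}(x_1)+\calV(x_2)\Ind_{C^c}(x_2)\bigr]+\beta_2\bigl[\Ind_C(x_1)+\Ind_C(x_2)\bigr].
\]
On $\bar C=C\times C$ this gives $\bar P W(\bar x,\bar u)\leq W(\bar x)+2\beta_2\leq b_0\df2\sup_C\calV+2\beta_2$. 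On $\bar C^c$ some coordinate, say $x_1$, lies in $C^c$; if $x_2\in C^c$ as well then $\bar PW-W\leq-\beta_1W$, while if $x_2\in C$ the quantitative part of (A1), $\inf_{C^c}\calV\geq\sup_C\calV\vee2\beta_2/\beta_1$, forces $\beta_2\leq\tfrac{\beta_1}{2}\calV(x_1)$ and $\calV(x_2)\leq\calV(x_1)$, whence $\bar PW-W\leq-\tfrac{\beta_1}{2}\calV(x_1)\leq-\tfrac{\beta_1}{4}W$. Hence, with $\beta_1'\df\beta_1/4$ (necessarily in $(0,1)$) and uniformly over all $\bar v(\bar y)=(v(y_1),v(y_2))$ with $v\in\Usm$, one has $\bar PW(\bar x,\bar u)\leq(1-\beta_1')W(\bar x)$ on $\bar C^c$ and $\bar PW(\bar x,\bar u)\leq b_0$ on $\bar C$.

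\emph{Steps 2 and 3 (passage to the split chain).} Unwinding \eqref{a6} through \eqref{a5} and \eqref{a4} and inserting Step~1 gives the one-step bounds: for $\hat z=(\bar x,0)$ with $\bar x\in\bar C^c$, $\Exp^*_{\hat z}[W^*(Z_1)]=\bar PW(\bar x,\bar v(\bar x))\leq(1-\beta_1')W^*(\hat z)$; for $\hat z\in\bar C_0$, $\Exp^*_{\hat z}[W^*(Z_1)]=\tfrac{1}{1-\gamma_1}\bigl(\bar PW(\bar x,\bar v(\bar x))-\gamma_1\bar\nu(W)\bigr)\leq b_*\df b_0/(1-\gamma_1)$; for $\hat z\in\bar C_1$, $\Exp^*_{\hat z}[W^*(Z_1)]=\bar\nu(W)=2\nu(\calV)\leq2\sup_C\calV$; moreover $W^*\leq2\sup_C\calV$ on $\bar C_0\cup\bar C_1$ and the chain started in $S\df\calY_0\cup\bar C_1$ stays in $S$. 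Also, for $\hat z=(\bar x,0)\in\bar C_0$, \eqref{a5}--\eqref{a6} together with $\bar P(\bar C\mid\bar x,\bar v)\geq\gamma^2\bar\nu(\bar C)=\gamma^2=2\gamma_1$ (from \eqref{a4}, since $\gamma_1=\gamma^2/2$) yield $\hat P(\bar C_1\mid\hat z)=\tfrac{\gamma_1}{1-\gamma_1}\bigl(\bar P(\bar C\mid\bar x,\bar v(\bar x))-\gamma_1\bigr)\geq p_0\df\gamma_1^2/(1-\gamma_1)>0$. Now with $D\df\bar C_0\cup\bar C_1$ and $R\df\#\{0\leq i<\uptau^*:Z_i\in\bar C_0\}$, the process
\[
M_n\;\df\;W^*(Z_{n\wedge\uptau^*})+\beta_1'\sum_{i=0}^{(n\wedge\uptau^*)-1}W^*(Z_i)\Ind_{S\setminus D}(Z_i)-b_*\sum_{i=0}^{(n\wedge\uptau^*)-1}\Ind_{\bar C_0}(Z_i)
\]
is a supermartingale: on $\{n<\uptau^*\}$ one has $Z_n\in\calY_0$ (since $Z_n\in S$ and $Z_n\notin\bar C_1$), and the bounds above give $\Exp^*[M_{n+1}\mid\mathcal F^*_n]\leq M_n$ both when $Z_n\in S\setminus D$ and when $Z_n\in\bar C_0$. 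Optional stopping, $W^*\geq0$, and monotone convergence give $\beta_1'\Exp^*_{\hat z}\bigl[\sum_{i=0}^{\uptau^*-1}W^*(Z_i)\Ind_{S\setminus D}(Z_i)\bigr]\leq W^*(\hat z)+b_*\Exp^*_{\hat z}[R]$. Since each visit to $\bar C_0$ is followed by an entry into $\bar C_1$ with conditional probability $\geq p_0$, $R$ is stochastically dominated by a geometric random variable with success probability $p_0$, so $\Exp^*_{\hat z}[R]\leq1/p_0$ (this and recurrence of the split chain also give $\uptau^*<\infty$ a.s.). Adding the trivial estimates $\sum_{i<\uptau^*}W^*(Z_i)\Ind_D(Z_i)\leq(2\sup_C\calV)R$ and $W^*(Z_{\uptau^*})\leq2\sup_C\calV$ (as $Z_{\uptau^*}\in\bar C_1$), one obtains $\Exp^*_{\hat z}\bigl[\sum_{i=0}^{\uptau^*}W^*(Z_i)\bigr]\leq\theta(W^*(\hat z)+1)$ with $\theta=\theta(\beta_1,\beta_2,\gamma,\sup_C\calV)$ independent of $\bar x$ and of $v$, which is \eqref{a7}.

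\emph{Main obstacle.} The substantive work is the passage to the split chain: unwinding the Athreya--Ney--Nummelin kernels \eqref{a5}--\eqref{a6} to extract the correct one-step drift on $\bar C_0$ and, especially, the \emph{uniform} one-step escape probability $p_0$ into the pseudo-atom, keeping every constant independent of $v\in\Usm$ (this is exactly where the $\sup_u$ in \eqref{01} and the $\inf_u$ in \eqref{02}/\eqref{a4} are used), and then carrying out the bookkeeping in the comparison argument needed to absorb the repeated visits to $\bar C_0$ through the defect term $b_*\Exp^*_{\hat z}[R]$. Step~1 is routine but genuinely relies on the quantitative inequality in (A1), not merely its drift form.
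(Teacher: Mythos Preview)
Your argument is correct and follows essentially the same route as the paper's: both derive the product drift inequality (your Step~1 is exactly the paper's \eqref{a8} with $\breve\calV=\tfrac{\beta_1}{4}\bar\calV$), both compute the same one-step escape probability $p_0=\gamma_1^2/(1-\gamma_1)$ from $\bar C_0$ into the pseudo-atom (the paper's $\gamma_2$ in \eqref{a10}--\eqref{a11}), and both then control the sum up to $\uptau^*$ by a geometric bound on the number of returns to $\bar C$. The packaging differs: the paper invokes \cite[Theorem~14.2.2]{meyn-tweedie} to bound a single excursion \eqref{a9} and then telescopes over the successive return times $\vartheta_k$, whereas you run a single supermartingale $M_n$ and count visits to $\bar C_0$ via $R$; your version is a bit more self-contained, the paper's a bit more modular. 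One small point to tidy: your claim ``on $\{n<\uptau^*\}$ one has $Z_n\in\calY_0$'' fails at $n=0$ when $\hat z\in\bar C_1$, so the supermartingale inequality can overshoot by at most $\bar\nu(W)\le 2\sup_C\calV$ at the first step; this constant is harmlessly absorbed into $\theta$ (or handled by conditioning on the first step out of $\bar C_1$), and the corresponding $i=0$ contribution to $\sum_{i<\uptau^*}W^*(Z_i)\Ind_D(Z_i)$ should likewise be added to your bound $(2\sup_C\calV)R$.
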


\begin{proof}
Define $\bar{\calV}(\bar{x}) = \calV(x_1)+ \calV(x_2)$. Then from \eqref{01} we have
\begin{equation}\label{a8}
\int_{\calY} \bar{V}(\bar{y})\, \bar{P}(d\bar{y}| \bar{x}, \bar{u}) - \bar{\calV}(\bar{x}) \leq
 -\breve{\calV}(\bar{x}) + \kappa\, \Ind_{\bar C}(\bar{x}),
\end{equation}
for some constant $\kappa>0$, where $\breve{\calV}(\bar{x})=\frac{\beta_1}{4}(\calV(x_1) + \calV(x_2))$.
Define 
\begin{align*}
\bar\uptau & = \inf\{ n\geq 1\;:\; \bar{X}_n\in\bar{C}\},
\\
\uptau_1 &= \inf\{n\geq 1 \; :\; X^{*}_n\in\bar{C} \}.
\end{align*} 
By above property of split Markov chain we have
$$\bar{E}_{\bar{x}}[\bar\uptau] \; =\; \Exp^*_{\delta^*_{\bar{x}}} [\uptau_1],$$
where $\delta^*_{\bar{x}}$ is defined as in \eqref{a5}. Using \eqref{a8} we can find constant
$\kappa_1$ satisfying (see for example, \cite[Theorem~14.2.2]{meyn-tweedie})
$$\bar{E}_{\bar{x}}\Big[\sum_{i=0}^{\bar\uptau-1}\breve{\calV}(\bar{X}_i)\Big]
\;\leq\; \kappa_1(\bar{\calV}(\bar{x}) + 1).$$
Thus for any $\hat{z}=(\bar{z}, i)\in \calY_0\cup \bar{C}_1$ we have from above that
\begin{equation}\label{a9}
\Exp^*_{\hat{z}} \Big[\sum_{i=0}^{\uptau_1-1}\breve{\calV}({X}^*_i)\Big]\; \leq\; \frac{\kappa_1}{\gamma_1\wedge(1-\gamma_1)}(\bar{\calV}(\bar{z}) + 1).
\end{equation}
Now define a sequence of stopping times $\{\vartheta_k\}$ as $\vartheta_0=0$ and
$\vartheta_k=\inf\{n>\vartheta_{k-1}\; :\; X^*_n\in\bar{C}\}$. By \eqref{a9} we have 
$\Prob^*_{\hat{z}}(\vartheta_k<\infty)=1$ for all $k\in\NN$ and $\hat{z}\in\calY_0\cup \bar{C}_1$.
Denote by $\mathfrak{F}_n = \sigma\{Z_i\; :\; i\leq n\}$. Then
\begin{align}\label{a10}
\Prob^*_{\hat{z}}(\uptau^*>\vartheta_k) & = \Prob^*_{\hat{z}}(\uptau^*>\vartheta_k, \, \uptau^*>\vartheta_{k-1})\nonumber
\\
&= \Exp^*\Big[\Ind_{\{\uptau^*>\vartheta_{k-1}\}} \Prob^*_{\hat{z}}
(\uptau^*>\vartheta_k|\mathfrak{F}_{\vartheta_{k-1}})
\Big]\nonumber
\\
&\leq \Exp^*\Big[\Ind_{\{\uptau^*>\vartheta_{k-1}\}} \sup_{\hat{z}\in\bar{C}_0}\Prob^*_{\hat{z}}
(\uptau^*> 1)\Big]\nonumber
\\
&\leq [\sup_{\hat{z}\in\bar{C}_0}\Prob^*_{\hat{z}}(\uptau^*> 1)]^{k-1}.
\end{align}
For $\bar{z}\in \bar{C}$ we have from \eqref{a4}, \eqref{a6} that
\begin{align*}
\Prob^*_{(\bar{z}, 0)}(\uptau^*=1)= \frac{1}{1-\gamma_1} (\gamma_1\bar{P}(\bar{C}|\bar{z}, v(\bar{z}))
- \gamma_1^2 \bar{\nu}(\bar{C}))\geq \frac{\gamma_1^2}{(1-\gamma_1)}\df \gamma_2\in(0, 1).
\end{align*}
Hence \eqref{a10} gives us that
\begin{align}\label{a11}
\Prob^*_{\hat{z}}(\uptau^*>\vartheta_k)\leq (1-\gamma_2)^{k-1}.
\end{align}
Letting $k\to\infty$ in \eqref{a11} we find that $\Prob^*_{\hat{z}}(\uptau^*<\infty)=1$.
Let 
$$\calH(\vartheta_k)\;\df\; \sum_{i=0}^{\vartheta_k-1}\breve{\calV}({X}^*_i).$$
Then for $\hat{z}=(\bar{z}, i)\in \calY_0\cup \bar{C}_1$ we get
\begin{align*}
\Exp^*_{\hat{z}}[\sum_{i=0}^{\uptau^*-1}\breve{\calV}({X}^*_i)] 
&= \Exp^*_{\hat{z}}\Big[\sum_{k=0}^\infty \Ind_{\{\uptau^*=\vartheta_k\}} \calH(\vartheta_k)\Big]\\
&= \Exp^*_{\hat{z}}\Big[\sum_{k=0}^\infty \Ind_{\{\uptau^*=\vartheta_k\}} \sum_{l=0}^{k-1}(\calH(\vartheta_{l+1})-\calH(\vartheta_{l}))\Big]
\\
&= \Exp^*_{\hat{z}}\Big[\sum_{l=0}^\infty  \Ind_{\{\uptau^*\geq \vartheta_{l+1}\}} (\calH(\vartheta_{l+1})-\calH(\vartheta_{l}))\Big]
\\
&\leq \kappa_3(1+\calV(\bar{z})) + \Exp^*_{\hat{z}}\Big[\sum_{l=1}^\infty \Ind_{\{\uptau^*> \vartheta_{l}\}} 
(\calH(\vartheta_{l+1})-\calH(\vartheta_{l}))\Big]
\\
&\leq \kappa_3(1+\calV(\bar{z})) + 
\Exp^*_{\hat{z}}\Big[\sum_{l=1}^\infty \Ind_{\{\uptau^*> \vartheta_{l}\}} \sup_{\hat{z}\in\bar{C}\times\{0,1\}}\Exp^*_{\hat{z}}[\calH(\vartheta_1)]\Big]
\\
&\leq \kappa_3(1+\calV(\bar{z})) + \kappa_4\sum_{l\geq 1}(1-\gamma_2)^{l-1}
\\
&\leq \kappa_5 (1+\calV(\bar{z})),
\end{align*}
for some constants $\kappa_3, \kappa_4, \kappa_5$, where in the sixth line
we use \eqref{a9}, \eqref{a11}. This proves \eqref{a7} since $\breve{\calV}$ is bounded on $\bar{C}$.
\end{proof}
Next Lemma establishes moment estimate of $\uptau^*$.
\begin{lemma}\label{L6.3}
For any compact set $K\subset\calX$ we can find positive constants $\del_1, \, \kappa_K$
such that
$$\sup_{v\in\Usm}\Exp^*_{\hat{z}}[e^{\del_1\, \uptau^*}]\; \leq\; \kappa_K \quad \text{for all}\; \hat{z}\in 
K_0\cup C_1.$$
\end{lemma}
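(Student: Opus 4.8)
The plan is to establish the geometric-tail estimate for $\uptau^*$ by combining the two structural facts already available: the quantitative recurrence of the sequence of visit times $\{\vartheta_k\}$ to $\bar C$ contained in Lemma~\ref{L6.2} (or rather its proof), and the geometric trials bound \eqref{a11}, namely $\Prob^*_{\hat z}(\uptau^*>\vartheta_k)\leq(1-\gamma_2)^{k-1}$. Heuristically, $\uptau^*$ is dominated by a geometric number of excursions between successive visits to $\bar C$, and each excursion has exponentially integrable length because $\breve{\calV}\geq 1$ is a Lyapunov function satisfying \eqref{a8}; this is exactly the setting of the Kendall/Meyn--Tweedie drift criterion for geometric ergodicity. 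So the first step is to fix a compact $K$ and note, using \eqref{a8} with $\breve{\calV}(\bar x)=\tfrac{\beta_1}{4}(\calV(x_1)+\calV(x_2))\geq\tfrac{\beta_1}{2}$, that there is $\del_0>0$ (independent of $v$ and of the starting point, since the drift \eqref{01} is uniform in the control) with
$$\sup_{v\in\Usm}\ \sup_{\hat z\in\bar C_0\cup\bar C_1}\Exp^*_{\hat z}\bigl[e^{\del_0\vartheta_1}\bigr]\;\leq\;\kappa_1,$$
i.e. the length of one excursion away from $\bar C$ has a uniform exponential moment. This is a standard consequence of \cite[Theorem~15.2.5 or Chapter~15]{meyn-tweedie} applied to the split chain $\{Z_n\}$, using that $\breve{\calV}$ is bounded on $\bar C$ and inf-compact off $\bar C$.

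The second step is to upgrade the single-excursion bound to a bound on $\vartheta_k$ and then on $\uptau^*$. Write $\vartheta_k=\sum_{l=1}^k(\vartheta_l-\vartheta_{l-1})$; by the strong Markov property and the uniform bound from Step~1, $\Exp^*_{\hat z}[e^{\del_0\vartheta_k}\mid\mathfrak F_{\vartheta_{k-1}}]\leq\kappa_1\,e^{\del_0\vartheta_{k-1}}$ on $\{\vartheta_{k-1}<\infty\}$ (the constant $\kappa_1$ being uniform because after the first visit the chain is at a point of $\bar C$), whence $\Exp^*_{\hat z}[e^{\del_0\vartheta_k}]\leq\kappa_K'\,\kappa_1^{\,k}$ for $\hat z\in K_0\cup C_1$, where $\kappa_K'$ absorbs the first excursion from $K$ (again via \eqref{a8} and inf-compactness of $\calV$ on $K$). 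Now decompose on the disjoint events $\{\uptau^*=\vartheta_k\}$, $k\geq1$:
$$\Exp^*_{\hat z}\bigl[e^{\del_1\uptau^*}\bigr]=\sum_{k\geq1}\Exp^*_{\hat z}\bigl[e^{\del_1\vartheta_k}\Ind_{\{\uptau^*=\vartheta_k\}}\bigr]\leq\sum_{k\geq1}\Bigl(\Exp^*_{\hat z}\bigl[e^{2\del_1\vartheta_k}\bigr]\Bigr)^{1/2}\Bigl(\Prob^*_{\hat z}(\uptau^*\geq\vartheta_k)\Bigr)^{1/2}$$
by Cauchy--Schwarz, and then bound the first factor by $(\kappa_K'\kappa_1^{\,k})^{1/2}$ provided $2\del_1\leq\del_0$, and the second by $(1-\gamma_2)^{(k-2)/2}$ using \eqref{a11}. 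The series $\sum_k (\kappa_1(1-\gamma_2))^{k/2}$ converges if $\del_1$ is chosen small enough that $\del_0\geq 2\del_1$ forces $\kappa_1$ small — but $\kappa_1$ is fixed, so instead one shrinks $\del_1$ only to ensure $2\del_1\leq\del_0$ and notes that the geometric decay $(1-\gamma_2)^{k/2}$ is not yet enough to beat a growing $\kappa_1^{k/2}$.

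This is the main obstacle, and it is resolved as follows: one must first choose $\del_0$ small enough (which is legitimate — the exponential-moment bound in Step~1 holds for all sufficiently small $\del_0$, and the bound $\Exp^*_{\hat z}[e^{\del_0\vartheta_1}\mid\mathfrak F]\leq 1+\kappa_1'\del_0$ type estimate shows the per-step multiplicative constant tends to $1$ as $\del_0\downarrow0$) so that $\kappa_1=\kappa_1(\del_0)$ satisfies $\kappa_1(\del_0)\,(1-\gamma_2)<1$; then set $\del_1=\del_0/2$ and the Cauchy--Schwarz series converges to a bound of the form $\kappa_K''(1+\calV(x_1)+\calV(x_2))^{1/2}$, which is finite, uniform in $v\in\Usm$, and bounded on the compact set $K$ (since $\calV$ is continuous, hence bounded, on $K$). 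Thus $\sup_{v}\sup_{\hat z\in K_0\cup C_1}\Exp^*_{\hat z}[e^{\del_1\uptau^*}]\leq\kappa_K$, which is the claim. I expect the bookkeeping in coordinating the two exponents (the $\del_0$ from the Lyapunov estimate and the $\del_1$ in the conclusion) and verifying the uniformity in $v$ to be the only delicate points; everything else is a routine application of the strong Markov property and the drift inequality \eqref{a8} already derived.
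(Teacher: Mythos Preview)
Your proof is correct and follows essentially the same strategy as the paper's: obtain an exponential moment for a single return to $\bar C$ via the drift inequality \eqref{a8} and \cite[Theorem~15.2.5]{meyn-tweedie}, shrink the exponent so the per-excursion multiplicative constant is small enough to be beaten by the geometric success probability, and sum. The paper's version is terser---after checking the hypothesis $\sup_{\bar z\in\bar C}\sup_{v}\Exp^*_{(\bar z,i)}[e^{\delta_3\uptau_1}]<\tfrac{1}{1-\gamma_1}$ it delegates your Cauchy--Schwarz geometric-sum step to \cite[Proposition~4]{masi-stettner} (cf.\ \cite[Lemma~4.7]{biswas-budhiraja}); your argument is the self-contained version of that citation.
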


\begin{proof}
Applying \cite[Theorem~15.2.5]{meyn-tweedie} and \eqref{a8} we can find $\del_2>0$ such that 
\begin{equation}\label{a11.50}
\sup_{v\in\Usm}\bar{\Exp}_{\bar{z}}[e^{\del_2\, \uptau}]\;\leq\; \kappa_1(\bar{\calV}(\bar{z}) +1),
\end{equation}
for some constant $\kappa_1$. Therefore from the property of Markov chain and \eqref{a11.50} can choose $\del_3$ small enough to satisfy 
$$\sup_{\bar{z}\in \bar{C}}\, \sup_{v\in\Usm}\,\Exp^*_{(\bar{z}, i)} [e^{\del_3 \uptau_1}]\;<\; \frac{1}{1-\gamma_1}.$$
Then the result follows from \cite[Proposition~4]{masi-stettner} (see also \cite[Lemma~4.7]{biswas-budhiraja}).
\end{proof}
The next lemma will be useful to show that the solution is in $\sorder(\calV)$.
\begin{lemma}\label{L6.4}
For any compact $K\supset C$, The function $x\mapsto \sup_{v\in\Usm}\Exp^v_x[\uptau(K)]$ is in 
$\sorder(\calV)$. 
\end{lemma}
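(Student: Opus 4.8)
The plan is to establish Lemma~\ref{L6.4} by a Foster--Lyapunov estimate, but run with a strictly sublinear power of $\calV$ rather than with $\calV$ itself: a direct application of \eqref{01} only yields a bound of order $\calV$, whereas the refinement $W\df\calV^{\delta}$ for a fixed $\delta\in(0,1)$ is exactly what produces a bound lying in $\sorder(\calV)$. If $\calX$ is compact there is nothing to prove (then $\sorder(\calV)$ contains every function on $\calX$), so assume $\calX$ is non-compact; then $C^{c}\neq\emptyset$, and evaluating \eqref{01} at a point of $C^{c}$ while using $\calV\geq 1$ (so the left side is $\geq 1-\calV(x)$) forces $\beta_{1}<1$, i.e.\ $\beta_{1}\in(0,1)$. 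First I would record the drift for $W$: for $x\in C^{c}$ and $u\in\Act$, \eqref{01} gives $\int_{\calX}\calV(y)\,P(dy|x,u)\leq(1-\beta_{1})\calV(x)$, so by concavity of $t\mapsto t^{\delta}$ and Jensen's inequality,
\[
\int_{\calX}W(y)\,P(dy|x,u)\;=\;\int_{\calX}\calV(y)^{\delta}\,P(dy|x,u)\;\leq\;\Bigl(\int_{\calX}\calV(y)\,P(dy|x,u)\Bigr)^{\!\delta}\;\leq\;(1-\beta_{1})^{\delta}\,W(x).
\]
With $\hat\beta\df 1-(1-\beta_{1})^{\delta}\in(0,1)$ and $W\geq 1$ this reads $\int_{\calX}W(y)\,P(dy|x,u)\leq W(x)-\hat\beta$ for all $x\in C^{c}$ and $u\in\Act$; integrating against any (possibly randomized) $v\in\Usm$ preserves the inequality.

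Next I would fix the compact set $K\supset C$ and an arbitrary $v\in\Usm$, start the chain at $x\in K^{c}$, and note that since $K^{c}\subset C^{c}$ the above drift is available at every state visited strictly before $\uptau(K)$. A routine verification then shows that
\[
Y_{n}\;\df\;W\!\bigl(X_{n\wedge\uptau(K)}\bigr)+\hat\beta\,\bigl(n\wedge\uptau(K)\bigr),\qquad n\geq 0,
\]
is a non-negative supermartingale under $\Prob^{v}_{x}$: on $\{\uptau(K)>n\}$ one has $X_{n}\in C^{c}$, whence $\Exp^{v}_{x}[Y_{n+1}\mid\calF_{n}]\leq(1-\hat\beta)W(X_{n})+\hat\beta(n+1)\leq Y_{n}$ because $W(X_{n})\geq 1$, while on $\{\uptau(K)\leq n\}$ the stopped process is frozen. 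Hence $\hat\beta\,\Exp^{v}_{x}[n\wedge\uptau(K)]\leq\Exp^{v}_{x}[Y_{n}]\leq Y_{0}=W(x)$ for every $n$, and letting $n\to\infty$ by monotone convergence,
\[
\sup_{v\in\Usm}\Exp^{v}_{x}[\uptau(K)]\;\leq\;\frac{W(x)}{\hat\beta}\;=\;\frac{\calV(x)^{\delta}}{\hat\beta},\qquad x\in K^{c},
\]
the bound being uniform in $v$ since the drift estimate used only $\sup_{u\in\Act}$.

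Finally I would read off the $\sorder(\calV)$ property. For the family $\{K_{n}\}$ of \eqref{0001} one has $K\subset K_{n}$ for all large $n$ (as $\cup_{n}K_{n}^{o}=\calX$ and $K$ is compact), hence $\calX\setminus K_{n}\subset K^{c}$ and
\[
\sup_{x\in\calX\setminus K_{n}}\frac{\sup_{v\in\Usm}\Exp^{v}_{x}[\uptau(K)]}{1+\calV(x)}\;\leq\;\frac{1}{\hat\beta}\sup_{x\in\calX\setminus K_{n}}\calV(x)^{\delta-1}\;=\;\frac{1}{\hat\beta}\Bigl(\inf_{x\in\calX\setminus K_{n}}\calV(x)\Bigr)^{\!\delta-1}.
\]
Since $\calV$ is inf-compact, $\inf_{x\in\calX\setminus K_{n}}\calV(x)\to\infty$ as $n\to\infty$, and since $\delta-1<0$ the right-hand side tends to $0$, which is exactly the statement that $x\mapsto\sup_{v\in\Usm}\Exp^{v}_{x}[\uptau(K)]$ belongs to $\sorder(\calV)$. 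The only step that is not completely mechanical is the first one: converting the geometric drift satisfied by $\calV$ into the geometric drift satisfied by the sublinear function $\calV^{\delta}$ is what separates an $\order(\calV)$ bound from an $\sorder(\calV)$ bound; Steps~2 and~3 are standard optional-stopping/Lyapunov bookkeeping and are not expected to present difficulty.
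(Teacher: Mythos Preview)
Your proof is correct and takes a genuinely different route from the paper's. The paper argues by an $\eps$-splitting: it first records the Lyapunov bound $\sup_{v}\Exp^v_x\bigl[\sum_{i<\uptau(K')}\calV(X_i)\bigr]\leq\kappa_1(1+\calV(x))$ with $\kappa_1$ independent of the compact $K'\supset C$ (this is \eqref{a11.51}), and then for arbitrary $\eps>0$ picks $K_n\supset K$ large enough that $\kappa_1/\inf_{K_n^c}\calV<\eps$; writing $\uptau(K)=\bigl[\uptau(K)-\uptau(K_n)\bigr]+\uptau(K_n)$ and invoking the strong Markov property at $\uptau(K_n)$, the second summand is $\leq\eps(1+\calV(x))$ on $K_n^c$ and the first is bounded by the constant $\sup_{K_n}G$, yielding $G\leq\kappa_\eps+\eps(1+\calV)$ for every $\eps>0$. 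Your approach instead transfers the geometric drift to the sublinear function $\calV^\delta$ via Jensen and obtains in one stroke the quantitative bound $G(x)\leq\hat\beta^{-1}\calV(x)^\delta$ on $K^c$, which is manifestly $\sorder(\calV)$. Your argument is shorter and gives an explicit growth rate; the paper's argument has the side benefit of isolating the estimate \eqref{a11.51}, which it reuses elsewhere (e.g.\ in Lemma~\ref{L4.3} via \eqref{18}, and in deriving \eqref{a17}).
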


\begin{proof}
Define $G(x)\df \sup_{v\in\Usm}\Exp^v_x[\uptau(K)]$. Thus using \eqref{01} we have (see \cite[Theorem 14.2.2]{meyn-tweedie})
\begin{equation}\label{a11.51}
\sup_{v\in\Usm}\Exp^v_x[\sum_{i=0}^{\uptau(K)-1} \calV(X_i)]\leq \kappa_1(\calV(x)+1),
\end{equation}
for some constant $\kappa_1$. In fact, the constant $\kappa_1$ is independent of any compact $K$ containing $C$.
Therefore we only need to consider the case when $\calX$ is not compact. 
Let $\eps>0$. Consider $K_n\supset K$ and let $x\in K^c_n$. Then we have for all $v\in\Usm$
\begin{align*}
\Exp^v_x[\uptau(K)] &= \Exp^v_x[\uptau(K)-\uptau(K_n)] + \Exp^v_x[\uptau(K_n)]
\\
&\leq \sup_{x\in K_n}\Exp^v_{x}[\uptau(K)] + \frac{1}{\inf_{K_n^c}\calV} \Exp^v_x[\sum_{i=0}^{\uptau(K_n)-1} \calV(X_i)]
\\
&\leq \sup_{x\in K_n}\Exp^v_{x}[\uptau(K)] + \frac{1}{\inf_{K_n^c}\calV} \kappa_1(\calV(x)+1).
\end{align*}
Therefore we can choose $n$ large enough to satisfy the following
$$G(x)\leq \kappa_\eps + \eps\, (\calV(x)+1), \quad \text{for all}\; x\in\calX.$$
$\eps$ being arbitrary we get from the above expression that $G\in\sorder(\calV)$.
\end{proof}

Now we are ready to prove Lemma~\ref{L6.1}.
\begin{proof}[Proof of Lemma~\ref{L6.1}]
Consider a compact subset $K$ of $\calX$ and $x_1, x_2\in K$. Let $v^\alpha\in\Udsm$ be a minimizer selector of \eqref{a2}. Then we have
$$V^\alpha_\mu(x)= \Exp_x\Big[\sum_{i=0}^\infty \alpha^i\, r_\mu(X_i, v^\alpha(X_i))\Big],$$
for all $x$. Therefore 
$$V^\alpha_\mu(x)=\min_{v\in\Usm} J(x, v).$$
For $v\in\Usm$, define
$$\Tilde{J}(x, v)\df \Exp_x\Bigl[\sum_{i=1}^\infty \al^i r_\mu(X_i, v(X_i))\Bigr].$$
Hence
\begin{equation}\label{a11.5}
\abs{V^\alpha_\mu(x_1)-V^\alpha_\mu(x_2)} \leq \max_{u\in\Act}\, \abs{r_\mu(x_1, u)-r_\mu(x_2, u)}
+ \sup_{v\in\Usm}\abs{\tilde{J}(x_1, v)-\tilde{J}(x_2, v)}.
\end{equation}
Thus to show equicontinuity we only need to show that the second term on the rhs of \eqref{a11.5} is small
whenever $\D_\calX(x_1, x_2)$ is small.
Let $\{K_n\}$ be the increasing sequence of compact subsets in $\calX$ chosen in \eqref{0001}.  
Let $\uppsi_n:\calX\to[0,1]$ be a Lipschitz
continuous function with property that $\uppsi_n= 1$ on $K_n$ and vanishes on $K^c_{n+1}$. Consider $\eps>0$. Let $v\in\Usm$. Then
we can choose $n$ large enough so that
\begin{align}\label{a12}
&\Exp_{x_1}\Big[\sum_{i=1}^{\infty} \alpha^i (1-\uppsi_n(X_i))r_\mu(X_i, v(X_i)) \Big]
- \Exp_{x_2}\Big[\sum_{i=1}^{\infty} \alpha^i (1-\uppsi_n(X_i))r_\mu(X_i, v(X_i)) \Big]\nonumber
\\
&= \Exp^*_{\delta^*_{\bar{x}}}\Big[\sum_{i=1}^{\infty} \alpha^i (1-\uppsi_n(X^{1, *}_i))r_\mu(X^{1, *}_i, v(X^{1, *}_i)) 
-  \alpha^i (1-\uppsi_n(X^{2, *}_i))r_\mu(X^{2, *}_i, v(X^{2, *}_i)) \Big]\nonumber
\\
&=\Exp^*_{\delta^*_{\bar{x}}}\Big[\sum_{i=1}^{\uptau^*} \alpha^i \Big((1-\uppsi_n(X^{1, *}_i))r_\mu(X^{1, *}_i, v(X^{1, *}_i)) 
-  (1-\uppsi_n(X^{2, *}_i))r_\mu(X^{2, *}_i, v(X^{2, *}_i))\Big) \Big]\nonumber
\\
&\leq 4\, \sup_{x\in K^c_{n}}\frac{\sup_{u\in\Act} r_\mu(x)}{\calV(x)} \Exp^*_{\delta^*_{\bar{x}}}\Big[\sum_{i=1}^{\uptau^*}  \breve{\calV}(X^{*}_i) \Big]\nonumber
\\
&<\eps/4,
\end{align}
where in the last line we use Lemma~\ref{L6.2}. We fix $n$ from above. For $k\in\NN$, we calculate
\begin{align*}
&\Exp_{x_1}\Big[\sum_{i=k}^{\infty} \alpha^i\, \uppsi_n(X_i)r_\mu(X_i, v(X_i)) \Big]
- \Exp_{x_2}\Big[\sum_{i=k}^{\infty} \alpha^i\, \uppsi_n(X_i)r_\mu(X_i, v(X_i)) \Big]
\\
& = \Exp^*_{\delta^*_{\bar{x}}}\Big[\sum_{i=k}^{\infty} \Ind_{\{k\leq \uptau^*\}}\alpha^i 
\Big(\uppsi_n(X^{1, *}_i)r_\mu(X^{1, *}_i, v(X^{1, *}_i)) 
-  \uppsi_n(X^{2, *}_i)r_\mu(X^{2, *}_i, v(X^{2, *}_i)) \Big)\Big]
\\
& = \Exp^*_{\delta^*_{\bar{x}}}\Big[\sum_{i=k}^{\uptau^*} \Ind_{\{k\leq \uptau^*\}}\alpha^i 
\Big(\uppsi_n(X^{1, *}_i)r_\mu(X^{1, *}_i, v(X^{1, *}_i)) 
-  \uppsi_n(X^{2, *}_i)r_\mu(X^{2, *}_i, v(X^{2, *}_i)) \Big)\Big]
\\
&\leq \kappa_n \Exp^*_{\delta^*_{\bar{x}}}[(\uptau^*-k)^+],
\end{align*}
where $\kappa_n\df 2\,\sup_{(x, u)\in K_{n+1}\times\Act}\, r_\mu(x, u)$. Therefore using Lemma~\ref{L6.3} we can choose $k$ large enough so that
\begin{align}\label{a13}
\Exp_{x_1}\Big[\sum_{i=k}^{\infty} \alpha^i\, \uppsi_n(X_i)r_\mu(X_i, v(X_i)) \Big]
- \Exp_{x_2}\Big[\sum_{i=k}^{\infty} \alpha^i\, \uppsi_n(X_i)r_\mu(X_i, v(X_i)) \Big]\leq \eps/4.
\end{align}
Fix $k$ as choosen above. Note that the above choice of $n$ and $k$ is independent of $v\in\Usm$.
 Now using (A2) it is clear that if $\D_\calX(x_1, x_2)$ is small enough then we have
\begin{align}\label{a14}
\Big|\Exp^v_{x_1}\Big[\sum_{i=1}^{k} \alpha^i\, \uppsi_n(X_i(v))r_\mu(X_i, v(X_i)) \Big]
- \Exp^v_{x_2}\Big[\sum_{i=1}^{k} \alpha^i\, \uppsi_n(X_i(v))r_\mu(X_i, v(X_i)) \Big]\Big|\leq \eps/4,
\end{align}
for all $v\in\Usm$.
Thus combining \eqref{a11.5}, \eqref{a12}, \eqref{a13} and \eqref{a14} we see that there exists $\delta>0$ such that 
$$\abs{V^\alpha_\mu(x_1)-V^\alpha_\mu(x_2)}<\eps, \quad \text{whenever}\quad \D_\calX(x_1, x_2)\leq \delta, \; x_1, x_2\in K.$$
Hence the proof.
\end{proof}

\begin{remark}\label{R-equi}
In fact, the proof Lemma~\ref{L6.1} shows that if $\calK\subset\calP(\calX)$ is compact then the family $\{V^\alpha_\mu, \alpha\in(0, 1), \mu\in\calK\}$ is equicontinuous on every compact subsets of $\calX$. To see this, we observe that  the estimate \eqref{a12} holds uniformly
in $\mu\in\calK$ by (A5)(1). Again for any compact set $K_1\subset\calX$ we have 
$$\sup_{(x, u)\in K_1\times\Act}\abs{r_\mu(x, u)-r_{\mu_n}(x, u)}\to 0, \quad \text{if}\quad \eD_p(\mu, \mu_n)\to 0.$$
Thus using (A2) we can have \eqref{a14} uniformly in $\mu\in\calK$.
\end{remark}
Define $\bar{V}^\alpha_\mu(x) \df V^\alpha(x)-V^\alpha(0)$. Thus by Lemma~\ref{L6.1} we obtain $\{\bar{V}^\alpha_\mu, \, \alpha\in(0, 1)\}$ locally bounded and locally equicontinuous. On the other hand, from \eqref{01} we have
for $\alpha\in(0,1)$ and  $v\in\Uadm$ that
\begin{align*}
\Exp^v_x[\alpha^k\calV(X_k)]- \calV(x) & = \sum_{i=1}^k \alpha^i \Exp_x^u[\calV(X_i)]-\alpha^{i-1}\Exp_x^u[\calV(X_{i-1})]
\\
&\leq \sum_{i=1}^k \alpha^{i-1} (\Exp_x^u[\calV(X_i)]-\Exp_x^u[\calV(X_{i-1})])
\\
&\leq \sum_{i=1}^k \alpha^{i-1} \Big(\beta_2\Ind_{C}(X_{i-1})-\beta_1\Exp_x^u[h(X_{i-1})]\Big).
\end{align*}
Therefore letting $k\to\infty$, and using the fact that $\calV\geq 1$, we get
\begin{equation}\label{a15}
\beta_1\sum_{i=0}^\infty\, \alpha^i\, \Exp_x^u[h(X_{i})] \; \leq \; \calV(x) + \frac{\beta_2}{1-\alpha}.
\end{equation}
Since $\max_{u\in\Act}\, r_\mu(\cdot, u)\in\sorder(\calV)$, we have from \eqref{a15} that for every compact $K\subset\calX$
\begin{equation}\label{a15.5}
(1-\alpha)\, V^\alpha_\mu(x) \leq \kappa_1,
\end{equation}
for some constant $\kappa_1$, depending on $K$. Therefore we can extract a subsequence of $\{\alpha\}$ tending
to $1$ so that as $\alpha\to 0$,
\begin{align*}
\bar{V}^\alpha_\mu \to V_\mu, \quad \text{and},\quad (1-\alpha)V^\alpha_\mu(x)\to \varrho, \; \forall\; x\in\calX.
\end{align*}
Now we intend to pass the limit in \eqref{a2}. To justify the limit we need to it is enough to show that
$\bar{V}^\alpha_\mu\in\sorder(\calV) $,uniformly in $\alpha$.
This can be achieved by Lemma~\ref{L6.4}. Let $\eps>0$ be given. We consider the sequence $\{K_n\}$ from
\eqref{0001}. Fix some $n$ with $K_n\supset C$, and let $x\in K_n^c$. Since $v^\alpha$ is a minimising selector, we get
\begin{align*}
\abs{\bar{V}^\alpha_\mu(x)} &= \Big|\Exp_x\Big[\sum_{i=0}^\infty\alpha^i r_\mu(X_i, v^\alpha(X_i))\Big]
- V^\alpha_\mu(0)\Big|
\\
&\leq \Big|\Exp_x\Big[\sum_{i=0}^{\uptau(K_n)-1}\alpha^i r_\mu(X_i, v^\alpha(X_i))\Big]\Big| +
\Big|\Exp_x\Big[\alpha^{\uptau(K_n)}V^\alpha_\mu(X_{\uptau(K_n)})\Big]
- V^\alpha_\mu(0)\Big|
\\
&\leq \sup_{K^c_n}\frac{\sup_{u\in\Act}r_\mu(x, u)}{\calV(x)}\;\Exp_x\Big[\sum_{i=0}^{\uptau(K_n)-1}\alpha^i \calV(X_i)\Big] +
\Big|\Exp_x\Big[(\alpha^{\uptau(K_n)}-1)V^\alpha_\mu(X_{\uptau(K_n)})\Big]\Big|
\\
&\quad +
\Big|V^\alpha_\mu(X_{\uptau(K_n)})- V^\alpha_\mu(0)\Big|
\\
&\leq \sup_{K^c_n}\frac{\sup_{u\in\Act}r_\mu(x, u)}{\calV(x)} \kappa_1(\calV(x)+1) + \Exp_x[\uptau(K_n)]\,
\sup_{x\in K_n}(1-\alpha)V^\alpha_\mu(x) + \sup_{x\in K_n}\bar{V}^\alpha_\mu(x),
\end{align*}
where in third inequality we use \eqref{a11.51} and the inequality $(1-z^m)\leq m(1-z)$ for all $z\in[0, 1], \, m\in\NN$.
Now choose $n$ large enough and apply \eqref{a15.5} to get
\begin{equation}\label{a17}
\abs{V_\mu(x)}\; \leq \kappa_\eps + \eps\calV(x) + \sup_{v\in\Usm}\Exp_x[\uptau(K_n)], \quad \text{for all}\; x\in \calX. 
\end{equation}
$\eps$ being arbitrary, applying Lemma~\ref{L6.4} we have from \eqref{a17} that $V^\alpha_\mu\in\sorder(\calV)$
uniformly in $\alpha\in(0, 1)$. This shows $V_\mu\in\sorder(\calV)$.
Thus we obtain that every sub sequential limit of $\{\bar{V}^\alpha_\mu\}$ satisfies
\begin{equation}\label{a16}
V_\mu(x) + \varrho =\min_{u\in\Act}\{r_\mu(x, u) + \int_{\calX} V_\mu(y)\, P(dy|x, u)\}.
\end{equation}
Also an application of Tauberian theorem shows that
$$\varrho\; \leq \; \varrho_\mu,$$
where $\varrho_\mu$ is given by \eqref{05}.  
From \eqref{01} we obtain that
$$\limsup_{n\to\infty} \frac{1}{n} \sup_{v\in\Usm}\Exp^v_{x}[\calV(X_n)]\; \leq \frac{\beta_2}{\beta_1}.$$
Since $V_\mu\in\sorder(\calV)$ (or equivalently, using \eqref{a17}) we have
\begin{equation}\label{a18}
\limsup_{n\to\infty} \frac{1}{n} \sup_{v\in\Usm}\Exp^v_{x}[\abs{V_\mu(X_n)}]=0, \quad \text{for all}\; x\in\calX.
\end{equation}
Let $\tilde{v}\in\Udsm$ be a minimizing selector of \eqref{a16} . For existence of measurable selector one may refer
\cite[Proposition~7.33]{bertsekas-shreve}.
Then for any $n\in\NN$ we have
$$V_\mu(x) + n\varrho = \Exp_x^{\tilde{v}}\Bigl[\sum_{i=0}^{n-1} r_\mu(X_i, \tilde{v}_i(X_i))\Bigr] + \Exp_x^{\tilde{v}}[V_\mu(X_{n})].$$
Now dividing both sides by $n$ and using \eqref{a18} we get
$$\varrho = \lim_{n\to\infty}\;\frac{1}{n}\Exp_x^{\tilde{v}}\Bigl[\sum_{i=0}^{n-1} r_\mu(X_i, \tilde{v}_i(X_i))\Bigr].$$
Thus
\begin{equation}\label{a19}
\varrho=\varrho_\mu=\lim_{n\to\infty}\;\frac{1}{n}\Exp_x^{\tilde{v}}\Bigl[\sum_{i=0}^{n-1} r_\mu(X_i, \tilde{v}_i(X_i))\Bigr], \quad \forall\; x\in\calX.
\end{equation}
Now we are ready to prove Theorem~\ref{Thm-1}.
\begin{proof}[Proof of Theorem~\ref{Thm-1}]
Existence of $(V_\mu, \varrho)$ with $V_\mu\in\sorder(\calV),\,\varrho=\varrho_\mu,$ is shown in \eqref{a16} and \eqref{a19}. Now consider any
pair $(V, \varrho)\in\calC(\calX)\times\R$ with $V(0)=0, \, V\in\sorder(\calV)$, that satisfies
\begin{equation}\label{a20}
V(x) + \varrho =\min_{u\in\Act}\{r_\mu(x, u) + \int_{\calX} V(y)\, P(dy|x, u)\}.
\end{equation}
It is easy to see that $V$ satisfies \eqref{a18}. Hence an argument similar to above can be used to show that $\varrho=\varrho_\mu$. It is enough to
show that $V=V_\mu$ where $V_\mu$ obtained as a subsequential limit of $\{\bar{V}^\alpha_\mu\}$.
Consider a point
$x_0\in C$ that lies in the support on $\nu$ (see \eqref{01}). Without loss of generality we assume that $x_0=0$. Define 
\begin{equation}\label{a20.5}
\B_\kappa\;\df\;\{x: \D_\calX(0, x)\leq \kappa\}, \quad a_\kappa\;\df\; \nu(\B_\kappa).
\end{equation}
$0$ being in the support of $\nu$ we have $a_\kappa>0$ for all $\kappa>0$. 
Now we recall that
$\uptau(A)$ denotes the hitting/return time to $A$. Since by \eqref{02} we have 
$$\Ind_C(x)\;\leq \; \frac{1}{\gamma\, a_\kappa} \inf_{u\in\Act}P(\B_\kappa| x, u),$$
we get from \eqref{01} that for any $v\in\Usm$,
\begin{align}\label{a21}
\Exp^v_x[\calV(X_{\uptau(\B_\kappa)})] + \beta_1\Exp^v_x\Big[\sum_{i=0}^{\uptau(\B_\kappa)-1} \calV(X_i)\Big] &\leq \calV(x) + 
\beta_2\Exp^v_x\Big[\sum_{i=0}^{\uptau(\B_\kappa)-1}\Ind_C(X_i)\Big]\nonumber
\\
&\leq \calV(x) + \frac{\beta_2}{\gamma a_\kappa}\Exp^v_x\Big[\sum_{i=0}^{\uptau(\B_\kappa)-1}P(\B_\kappa|X_{i}, v(X_i))\Big]\nonumber
\\
&\leq \calV(x) + \frac{\beta_2}{\gamma a_\kappa}.
\end{align}
One can use similar argument as in \eqref{a21} to obtain that
\begin{equation}\label{a22}
\limsup_{n\to\infty}\sup_{v\in\Usm}\Exp^v_x[\calV(X_{n\wedge\uptau(\B_\kappa)})]<\infty, \quad \forall\; \kappa>0.
\end{equation}
Since $V\in\sorder(\calV)$, using \eqref{a22} we obtain that for any
$v\in\Usm$ 
\begin{align}\label{a23}
\lim_{n\to\infty} \Exp_x[V(X_{n\wedge\uptau(\B_\kappa)})] &= \lim_{n\to\infty} \Exp_x[\Ind_{\{n\geq \uptau(\B_\kappa)\}}V(X_{\uptau(\B_\kappa)})] +
\lim_{n\to\infty} \Exp_x[\Ind_{\{n< \uptau(\B_\kappa)\}}V(X_{n})]\nonumber
\\
&=  \Exp_x[V(X_{\uptau(\B_\kappa)})].
\end{align}
Since there is a minimizing selector of \eqref{a20} in $\Udsm$ we get  from \eqref{a21}, \eqref{a23} and dominated convergence theorem that
\begin{equation}\label{a24}
V(x) \;=\; \inf_{v\in\Udsm}\Exp_x\Bigl[\sum_{i=0}^{\uptau(\B_\kappa)-1} (r_\mu(X_i, v(X_i)) -\varrho_\mu) + V(X_{\uptau(\B_\kappa)})\Bigr],
\quad \forall\; \kappa>0.
\end{equation}
The infimum in \eqref{a24} can be replaced by minimum since it is achieved by the minimizer selector of \eqref{a20}.
It is worth mentioning that similar expression of value function is known for diffusion control problems \cite[Chapter~3.7]{ari-bor-ghosh}. We observe that 
$V_\mu$ also satisfies equation similar to \eqref{a24}. Now let $\tilde{v}$ be a minimizing selector of \eqref{a20}. Then $\tilde{v}$ would be sub-optimal for
$V_\mu$. Hence from \eqref{a24} we would get
$$V_\mu(x)\leq V(x) + \sup_{\B_\kappa}V_\mu - \inf_{\B_\kappa} V, \quad \forall\; \kappa>0.$$
Since $V(0)=V_\mu(0)=0$, letting $\kappa\to 0$, we get that $V_\mu\leq V$. Similarly, we also obtain $V\leq V_\mu$ and thus $V=V_\mu$.

Now we are remained to show that every optimal control $v\in\Usm$ is a minimizing selector of \eqref{a20} in the sense that
\begin{equation}\label{a25}
V_\mu(x) + \varrho_\mu = r_\mu(x, v(x)) + \int_{\calX} V_\mu(y) P(dy|x, v(x)), \quad \text{almost surely with respect to}\; \eta_v. 
\end{equation}
Let \eqref{a25} does not hold. 
Since the rhs of the above display is locally finite we can find non-negative $f\neq 0$ in $L^\infty(\eta_v)$, supported in some $\B_\kappa$ ,
that safisfies
\begin{equation}\label{a26}
V_\mu(x) + \varrho_\mu + f(x) = r_\mu(x, v(x)) + \int_{\calX} V_\mu(y) P(dy|x, v(x)).
\end{equation}
Then using \eqref{a26} and an argument as in \eqref{a19} we have
\begin{align*}
\varrho_{\mu} &= \lim_{n\to\infty} \frac{1}{n}\, \Exp_x\Big[\sum_{i=0}^{n-1} \Big(r_{\mu}(X_i, v(X_i))-f(X_i)\Big)\Big] 
\\
&= \lim_{n\to\infty} \frac{1}{n}\, \Exp_x\Big[\sum_{i=0}^{n-1} r_{\mu}(X_i, v(X_i))\Big] - \int_{\calX} f(y)\, \eta_v(dy)
\\
&=\varrho_\mu- \int_{\calX} f(y)\, \eta_v(dy),
\end{align*}
by \cite[Theorem~14.3.3]{meyn-tweedie} and the optimality of $v$. But this implies $f=0$ almost surely with respect to $\eta_v$ which is a contradiction.
This proves \eqref{a25}.
\end{proof}


\bibliographystyle{plain}

\bibliography{MFG}

\end{document}